\newtheorem{thm}{Theorem}[section] 
\newtheorem{dfn}[thm]{Definition}
\newtheorem{rmk}[thm]{Remark}
\newtheorem{rmks}[thm]{Remarks}
\newtheorem{cor}[thm]{Corollary}
\newtheorem{prop}[thm]{Proposition}
\newtheorem{lem}[thm]{Lemma}
\def\cyclic{\mathop{\kern0.9ex{{+}
\kern-2.2ex\raise-.28ex\hbox{\Large\hbox
{$\circlearrowright$}}}}\limits}
\def\buildrel#1_#2^#3{\mathrel{\mathop{\kern 0pt#1}\limits_{#2}^{#3}}}
\newcommand{\Aut}{{\rm Aut}}
\newcommand{\Id}{{\rm Id}}
\newcommand{\eps}{\varepsilon}
\newcommand{\bO}{{\bf\Omega}}
\newcommand{\C}{\mathbb C}
\newcommand{\Z}{\mathbb Z} 
\newcommand{\R}{\mathbb R}
\newcommand{\Q}{\mathbb Q} 
\newcommand{\N}{\mathbb N}
\renewcommand{\k}{{\mathfrak{k}}{}}
\renewcommand{\k}{{\mathfrak{k}}{}}
\newcommand{\CO}{{\mathcal O}{}}
\newcommand{\CS}{\mathcal S}
\newcommand{\CL}{{\mathcal L}{}} 
\newcommand{\CD}{\mathcal D}
\newcommand{\CH}{\mathcal H}
\newcommand{\CB}{\mathcal B}
\newcommand{\CU}{\mathcal U}
\newcommand{\CF}{\mathcal F}
\newcommand{\vf}{\varphi}
\renewcommand{\k}{{\bf k}}
\def\cref#1{Corollary~\ref{#1}}
\title{Quantization of the affine group of a local field}
 \date{}
 \author{Victor Gayral}
 \author{David Jondreville}
\begin{document}

\begin{abstract}
For a non-Archimedean local field which is not
of characteristic $2$, nor an extension of $\Q_2$, we construct a pseudo-differential calculus covariant under a unimodular subgroup of the affine group of the  field. Our phase space is   a quotient group of the covariance group. Our main result 
is a generalization on that context of the Calder\'on-Vaillancourt estimate. Our construction can be thought as the non-Archimedean version of Unterberger's Fuchs calculus
 and our methods are mainly based on Wigner functions and on coherent states transform.

\end{abstract}

\maketitle

{\Small{\bf Keywords:}  Equivariant quantization,
Local fields, $p$-adic pseudo-differential analysis, $p$-adic Fuchs calculus, Coherent states, Wigner functions, Calder\'on-Vaillancourt estimate}

\tableofcontents
 \section{Introduction}
 The present work fits in a research program \cite{BG,BGT,GJ1,GJ3} where we aim to generalise Rieffel's theory  \cite{Ri} of deformation of $C^*$-algebras for 
 group actions, in the case of more general groups than the Abelian group $\R^d$.
  In \cite{BG} we have  studied the case of   K\"ahlerian Lie groups of negative sectional curvature, \cite{BGT} extends Rieffel's construction in the 
 super-symmetric case and \cite{GJ1} deals with non-Archimedean local fields instead of the real line. The present paper concerns the non-Abelian and non-Lie situation of the affine 
 group of a non-Archimedean local field. However, we only present here
 the pseudo-differential calculus  part of the construction, the rest will be published elsewhere \cite{GJ3}. The point  is that  the generalisation
 of the Weyl calculus is an  important subject on its own and it has generated an important activity over the last decades. See for instance 
 \cite{AE,AU1,AU2,Bechata,BB,BC1,BC2,FGBV,FR,GGBV,GS,GB,Haran,Manchon,P,RT,Un84,UnUn,VGB,W}  (which is by no mean exhaustive).
 Concretely, we present here  what we call the $p$-adic Fuchs calculus  and  prove a crucial estimate concerning $L^2$-boundedness.

 Our pseudo-differential calculus is covariant under  a subgroup of the affine group of a non-Archime\-dean  local field $\k$ (of characteristic different from  $2$ and which is
 not an extension of $\Q_2$). Explicitly, the covariance group is  the semidirect product $G_n:= U_n\ltimes\k$ where   $U_n$ is  the subgroup of the multiplicative group of
 units (the unit sphere of $\k$) given by the principal units of order $n\in\mathbb{N}\setminus\{0\}$  acting
 on the additive group $\k$ by  dilations. 
 Our phase space $X_n $ is, as usual, 
 an homogeneous space for $G_n$. In fact, $X_n$ is a quotient group of $G_n$  and has the form $U_n\ltimes\Gamma_n$ where $\Gamma_n$ is discrete and 
 Abelian.   Last, our configuration space is the compact group $U_n$ and this choice is dictated by the representation theory of 
  the locally compact group $G_n$.
  
   That the phase space is also endowed with a compatible group structure is  essential to generalise Rieffel's machinery. Indeed,
the starting point of Rieffel
   deformation of $C^*$-algebras for  actions of a (locally compact) group $G$ is the data of an associative, noncommutative and $G$-equivariant
    product on a suitable space of functions on the group $G$. Such a product is precisely  what equivariant quantization yields (using  the composition law of symbols),
    provided the phase-space $X$ is endowed with a group structure compatible with the action of $G$.

To define our pseudo-differential calculus, we construct a quantization map (from distributions on the phase-space $X_n$ to operators acting on  functions on the configuration
space $U_n$) by mimicking,    in this $p$-adic setting,  Unterberger's  Fuchs calculus   \cite{Un84}.  In the latter construction, 
 the covariance group and 
 the phase space   both coincide with the connected component of the affine group of the real line. But since a non-Archimedean  local field is totally disconnected,
 there is no obvious $p$-adic version of the Fuchs calculus, at least from a topological perspective. 
  In fact, the way the construction of  \cite{Un84} should be adapted to the $p$-adic world, is dictated by analytical considerations. Indeed,  what we really need is an open
   subgroup $U$
  of the multiplicative group $\k^\times$ such that the square mapping
  $$
  U\to U, \quad u\mapsto u^2,
  $$
  and the hyperbolic sine type mapping
  $$
  U\to V,\quad u\mapsto u-u^{-1},
  $$
  (where $V$ is a suitable open subset of $\k$) are homeomorphisms. 
  This is why we need to work with the compact group $U_n$ and  this is also  why we need to
   exclude characteristic $2$ and extensions of $\Q_2$.
  The existence of  a square root function and of an inverse hyperbolic sine function (on the configuration space)
  is in fact the main common point with the classical Fuchs calculus.

Before going on, we should first explain from an abstract perspective, the type of pseudo-differential calculus we are interested in. So, let $G$ be a (second countable
Hausdorff) locally compact topological group, $H$ be a closed subgroup  and let $X=G/H$ be the associated
 homogeneous space. Fix now $(\pi,\CH_\pi)$ a projective unitary irreducible continuous representation of $G$
which we assume to be square integrable (see \cite{Aniello} for the projective case). 
(Projectivity is important to get a non-trivial construction even when $G$ is Abelian, e.g$.$ \cite{GJ1}.) Fix also $\Sigma$
a  self-adjoint operator on $\CH_\pi$, whose  domain $D$ is  $G$-invariant, such that the map 
$G\to\CH_\pi$, $g\mapsto\Sigma\pi(g)\vf$ ($\vf\in D$)  is locally bounded and such that  $[\Sigma,\pi(H)]=0$. In this case, the operator valued function  $g\mapsto
\pi(g)\Sigma\pi(g)^*$ is invariant under right translations on $H$ and therefore defines a map on the quotient $X$. We  denote this map by $\Omega$.
From this, we have a well defined quantization map
\begin{equation}
\label{la-belle}
\bO:C_c(X)\to \mathcal L(D,\CH_\pi),\qquad f\mapsto \int_X f(x)\,\Omega(x)\,dx,
\end{equation}
from compactly supported continuous functions on $X$ to  $\mathcal L(D,\CH_\pi)$, 
 the set of densely defined operators with domain $D$. In the above formula,  $dx$ is a $G$-invariant measure on $X$
(that we assume to exist) and the integral is understood in the weak sense
on $\CH_\pi\times D$. By construction, this quantization map is $G$-covariant:
\begin{equation}
\label{lab-elle}
\pi(g)\bO(f)\pi(g)^*=\bO(f^g)\quad\mbox{where}\quad f^g(x)=f(g^{-1}.x),\quad g\in G,\;x\in X.
\end{equation}
To generalise Rieffel's construction, we need  more properties.
First, we need to assume  the quantization map to be invertible and that its image forms an algebra.
In such a case, we have a well defined  composition law
of symbols, that is a non-formal  $\star$-product:
$$
f_1\star f_2:=\bO^{-1}\big(\bO(f_1)\bO(f_2)\big).
$$
  (Omitting all the analytical details, to get an analogue of Rieffel's construction, this non-formal and associative 
  $\star$-product will eventually  be extended from functions on $X$ to elements of a $C^*$-algebra $A$
endowed with a strongly continuous and isometric action $\alpha$ of the group $X$ by the rule:
$a_1\star^\alpha a_2:=\bO^{-1}\big(\bO(\alpha(a_1))\bO(\alpha(a_2))\big)(e)$,  $a_1,a_2\in A$.
Here, for $a\in A$,  $\alpha(a)$ is the $A$-valued continuous function on the group $X$ given by $[x\mapsto\alpha_x(a)]$ and $e$ is the neutral element of $X$.)

At least for type $I$ groups, the most natural  assumption is  that $\bO$ extends as unitary operator
from $L^2(X)$ to the Hilbert space of Hilbert-Schmidt operators on $\CH_\pi$. When this property holds, we speak about \emph{unitary quantization}.
It is this condition that imposes (at least when $G$ is not unimodular) to work
with unbounded $\Sigma$.    In fact, we can easily replace the Hilbert space of Hilbert-Schmidt operators by the GNS space of a normal semifinite faithful weight on a
von Neumann subalgebra of $\CB(\CH_\pi)$ containing $\bO(C_c(X))$ without changing many things. However,  in all the examples we know
yet, it is the Hilbert-Schmidt operators that 
appear.

Last but not least, we need $X$ to be endowed with a group structure in such a way that the covariance property \eqref{lab-elle} gives also covariance
of the $\star$-product (the composition law of symbols) for the left action of the group $X$.
If this condition is not satisfied, then  there is no way to extend Rieffel's construction.
The simplest  situation where this property occurs is when $G$ possesses a closed subgroup $G_o$ acting simply transitively on $X$ and this is what happens  in \cite{BG}. 
Under the  identification $G_o\simeq X$, the topological space $X$ 
becomes a locally compact group and the $G$-invariant measure $dx$ becomes a left Haar measure on the group $X$. Hence, the map \eqref{la-belle} defines a 
$X$-covariant unitary quantization on the group $X$ and (with $\lambda$ the left regular representation)  the covariance property \eqref{lab-elle} now reads 
$\pi(x)\bO(f)\pi(x)^*=\bO(\lambda_xf)$, $x\in X\simeq G_o\subset G$. 
At the level of the $\star$-product, it immediately implies left covariance:
$\lambda_x(f_1\star f_2)=\lambda_x(f_1)\star \lambda_x(f_2)$, $x\in X$.
More generally, this property holds when the closed subgroup $H$ is normal in $G$ (this is the situation we came across in this paper). Hence, $X=G/H$ is a quotient group. 
However, it is not true in general that the quotient group $X$ acts on $\CH_\pi$ so that the covariance property is a slightly more delicate question. In fact, since $[\Sigma,\pi(H)]=0$,
it is easy so see that the quotient group $X$ acts by conjugation on the von Neumann algebra generated by the operators $\bO(f)$, $f\in C_c(X)$. Therefore, 
the covariance property \eqref{lab-elle}  reads in that case $\pi(g)\bO(f)\pi(g)^*=\bO(\lambda_xf)$ where $g\in G$ and  $x=gH\in X$. But at
the level of the $\star$-product, we still obtain  left covariance for the regular action of the group $X$:
$\lambda_x(f_1\star f_2)=\lambda_x(f_1)\star \lambda_x(f_2)$.

  The paper is organized as follows. In Section \ref{aprem}, we set the ingredients  that will be needed in our construction.
  In Section \ref{PAFC}, we define the $p$-adic Fuchs calculus and prove its basic properties. The final Section \ref{CV} contains our main result, namely
  an extension of the Calder\'on-Vaillancourt Theorem for the $p$-adic Fuchs calculus.

\section{Preliminaries}
\label{aprem}

\subsection{The phase space and its covariance group}
Let ${\bf k}$ be a non-Archimedean local field. In characteristic zero, $\k$ is isomorphic to   a finite 
degree extension of a field
of $p$-adic numbers $\Q_p$. In positive characteristic, $\k$ is isomorphic to
a field of Laurent 
series $\mathbb F_q((X))$ with coefficients in a finite field $\mathbb F_q$.
We let $|.|_{\bf k}$ be the ultrametric absolute value  (the restriction to dilations of the module 
function),   $\CO_{\bf k}$ be the ring of integers of $\k$ (the closed --and open-- unit ball), $\varpi$  be a 
generator of its unique maximal ideal (called a uniformizer) and $\CO_{\k}^{\times}$ be 
the multiplicative group of units (the --open-- unit sphere). 
We also denote by $U_1:=1+\varpi\CO_{\k}$ the group of principal units and, more generally, 
 for an  integer $n >1$ we denote by $U_n:=1+\varpi^n\CO_{\k}$ the higher principal units group.
We let $q$ be the cardinality of the residue field $\CO_{\k}/\varpi\CO_{\k}$. The uniformizer $\varpi$ 
then satisfies $|\varpi|_{\k} = q^{-1}$. We will also let $p$ be the characteristic of the residue field. 
It satisfies $q=p^f$ for an integer $f\in\N\setminus\{0\}$.

For technical reasons that we will explain soon,
we will be forced to exclude two cases: when the characteristic of $\k$ is $2$ and when $\k$ is an 
extension of  $\Q_2$. The next Proposition--certainly well 
known to the experts--will help to understand why  we do need such  a restriction.

\begin{prop}
\label{exclud}
The following assertions are equivalent:
\begin{enumerate}
\item $\k$ is of characteristic different from $2$ and it is not an extension of $\Q_2$;
\item $q$ is not divisible by $2$;
\item $2$ belongs to the group of units $\CO_{\k}^{\times}$;
\item $-1$  does not belong to the group of principal units $U_1$.
\end{enumerate}
\end{prop}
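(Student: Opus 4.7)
The plan is to prove the chain of equivalences $(1)\Leftrightarrow(2)\Leftrightarrow(3)\Leftrightarrow(4)$, each step being a short computation with the residue field and the absolute value.

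For $(1)\Leftrightarrow(2)$, I would use the classification of non-Archimedean local fields recalled in the paragraph preceding the proposition. Since $q = p^f$, the condition $2\nmid q$ is equivalent to $p \neq 2$, i.e.\ the residue characteristic is odd. If $\k$ has characteristic zero then $\k$ is a finite extension of $\Q_p$, and $p = 2$ exactly means $\k$ is an extension of $\Q_2$; if $\k$ has positive characteristic then $\k \cong \mathbb F_{\ell}((X))$ with $\ell = p^f$, so the residue characteristic coincides with the characteristic of $\k$, and $p = 2$ exactly means $\mathrm{char}(\k) = 2$. Combining the two cases gives that $2 \mid q$ iff $\k$ has characteristic $2$ or is an extension of $\Q_2$, which is the negation of (1).

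For $(2)\Leftrightarrow(3)$, I would unwind the definitions. The ring of integers $\CO_{\k}$ is the closed unit ball for $|\cdot|_{\k}$, the maximal ideal $\varpi\CO_{\k}$ is the open unit ball, and the residue field is $\CO_{\k}/\varpi\CO_{\k}$, of characteristic $p$. An element $a \in \CO_{\k}$ lies in $\CO_{\k}^\times$ iff its image in the residue field is nonzero. Applied to $a = 2$, we get $2 \in \CO_{\k}^\times$ iff $2 \neq 0$ in $\CO_{\k}/\varpi\CO_{\k}$ iff the residue characteristic $p$ is not $2$ iff $2 \nmid q = p^f$.

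For $(3)\Leftrightarrow(4)$, I would observe $-1 \in U_1 = 1 + \varpi\CO_{\k}$ iff $-2 = -1 - 1 \in \varpi\CO_{\k}$, which (since $-1$ is itself a unit) is equivalent to $2 \notin \CO_{\k}^\times$, i.e.\ to the negation of (3). Equivalently, this is the statement $|{-2}|_{\k} < 1 \Leftrightarrow |2|_{\k} < 1$, which is immediate from $|{-1}|_{\k} = 1$.

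I do not anticipate a real obstacle here; each step is a one-line verification once the structure theory for local fields and the definitions of $\CO_{\k}$, $\varpi$, $U_1$ are in place. The only point requiring a small amount of care is the $(1)\Leftrightarrow(2)$ step, where one must handle the characteristic zero and positive characteristic cases separately to identify the residue characteristic in each.
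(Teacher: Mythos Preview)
Your proof is correct and covers the same ground as the paper's, but the logical organization differs slightly. The paper establishes $(1)\Leftrightarrow(2)$ and then runs the cycle $(2)\Rightarrow(3)\Rightarrow(4)\Rightarrow(1)$; in particular, its step $(4)\Rightarrow(1)$ is proved by contraposition using the explicit $2$-adic identity $-1=\sum_{n\geq 0}2^n$ in extensions of $\Q_2$ (and $-1=1$ in characteristic $2$). You instead prove each adjacent equivalence directly: your $(2)\Leftrightarrow(3)$ via the residue-field criterion for units is cleaner and more uniform than the paper's case-by-case $(2)\Rightarrow(3)$, and your $(3)\Leftrightarrow(4)$ is the same computation as the paper's $(3)\Rightarrow(4)$, just read both ways. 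Your route avoids the $2$-adic series argument entirely and is slightly more economical; the paper's route has the virtue of exhibiting $-1$ concretely as a principal unit in the excluded cases.
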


\begin{proof}
$(1) \Leftrightarrow (2)$ This is obvious since $q=p^f$. 

$(2) \Rightarrow (3)$  This is also obvious. Indeed, in characteristic zero, if $p \neq 2$, then $2$ is a 
unit of $\Q_p$, hence a unit of any finite degree extension of $\Q_p$. Similarly, in  characteristic different from two, $2$ is a unit of 
$\mathbb F_\ell((X))$.

$(3) \Rightarrow (4)$ By contraposition, suppose that $-1 \in U_1$. Then, there exists $z 
\in \CO_{\k}$ such that $-1 = 1+\varpi z$, so that $ 2 = -\varpi z \in \varpi\CO_{\k}$. Hence 
$|2|_\k=q^{-1}|z|_\k<1$.

$(4) \Rightarrow (1)$ By contraposition, suppose that $\k$ is a finite extension of $\Q_2$ or that the 
characteristic of $\k$ is $ 2$. In characteristic zero, we have $-1 = 
\sum_{n\in\N}2^n \in 1+\varpi\CO_{\k}$. In characteristic $2$, we have 
$-1 = 1 \in 1+ \varpi\CO_{\k}$. 
\end{proof}

\noindent
{\it From now on, we  assume the local field $\k$ to satisfy one the equivalent conditions of
Proposition  \ref{exclud}.}

\quad

Note that $U_n$ is  an open and  compact  multiplicative subgroup of $\k^\times$ and that $U_n$
 acts
  by dilations on $\k$. We can therefore  consider the  semidirect product $G_n:=U_n  \ltimes  \k$, that we
  view as a  subgroup of  the affine group $\k^\times\ltimes\k$,
   with group law
  \begin{equation}
\label{grouplaw}  (x,t).(x',t')  := (xx', x'^{-1}t+t')\,,\quad x,x'\in\k^\times\,,\;t,t'\in\k.
 \end{equation}
 The group $G_n$ will be the \emph{covariance group} of our pseudo-differential calculus and the subgroup $U_n$  will play the
role of the \emph{configuration space}.
 The role of the \emph{phase space} will be played by  the 
 homogeneous space $X_n :=  G_n/H_n$, associated to the closed subgroup 
 $H_n := \{1\} \times \varpi^{-n}\CO_{\k}$. Note that since $H_n$ is a normal  subgroup of $G_n$,   
$X_n$ is naturally endowed with a group structure. In fact, $X_n$ is  isomorphic to  $U_n \ltimes \Gamma_n$,
where $\Gamma_n := \k  /{\varpi^{-n}\CO_{\k}}$ is discrete and Abelian.
As a locally compact  group, the phase space $X_n$ is not compact, nor 
discrete and non-Abelian. We will denote the elements of $X_n$
by pairs $(u,[t])$, where $u\in U_n$ and $[t]:=t+\varpi^{-n}\CO_\k\in\Gamma_n$,
 so that the group law of $X_n$ is given by
$$
(u,[t]).(u',[t']) \; := \; (uu', [u'^{-1}t+t']).
$$
To simplify notation, we will frequently denote an element of $G_n$ by $g:=(u,t)$ and an element
of $X_n$ by $[g]:=(u,[t])$. Also, $e:=(1,0)$ will denote the neutral element of $G_n$
and $[e]:=(1,[0])$, the one of $X_n$.
Note also that the action of $U_n$ on 
$\Gamma_n$, $(u,[t])\mapsto[ut]$, is isometric for the quotient metric:
$$
d_{\Gamma_n}\big([t_1],[t_2]\big):=d_\k\big( t_1,t_2+\varpi^{-n}\CO_\k\big)=\inf_{x\in\varpi^{-n}\CO_\k}|t_1-t_2+x|_\k.
$$
($d_{\Gamma_n}$ is nondegenerate because $t_2+\varpi^{-n}\CO_\k$ is closed and it
satisfies
the triangle inequality because $2\in\CO_\k^\times$.) That we are in the very specific situation where the phase space $X_n$ 
is also endowed with a group structure will be very important for future developments of the present work \cite{GJ3}
(see the introduction).
This explains why we will carry on studying some group-theoretical properties of $X_n$, even if we simply need the group structure
of $G_n$ here.

We  normalize the Haar measure $dt$ of $(\k,+)$
so that ${\rm Vol}(\CO_\k)=1$. Since $U_n\subset \CO_\k^\times$, we have 
$|u|_\k=1$ for all $u\in U_n$,
so that the Haar measure of $\k^\times$ (given by  $|t|_\k^{-1}dt$) restricted to $U_n$
is just $dt$.  With that choice we have ${\rm Vol}(U_n)=q^{-n}$.
The Haar measure of  $G_n$, denoted by $dg$, is chosen to be the restriction to
$U_n\times \k$ of  the Haar measure of $\k\times\k$. The $G_n$-invariant measure of  $X_n$
(which is also the Haar measure of $X_n$),
denoted by $d[g]$, is chosen to be the product of the Haar measure 
of $U_n$ by the counting measure on $\Gamma_n$.
In particular,  both groups $G_n$ and $X_n$ are unimodular. 

We fix once for all  a unitary character $\Psi$ of the additive group $\k$ 
which is required to be trivial on 
$\CO_\k$ but not on $\varpi^{-1}\CO_\k$. (The conductor of $\Psi$ is therefore $\CO_\k$ itself.)
Recall that   $\widehat\CO_\k$,  the Pontryagin dual of the compact-open additive group $\CO_\k$, 
is naturally  identified with the discrete group $\k/\CO_\k$. Indeed, if  for $G$ a locally
compact Abelian group and for $H$ an open subgroup of $G$, we let  $A(\widehat G,H)$
be the annihilator  of $H$ in the Pontryagin dual $\widehat G$, we then have by \cite[Lemma
24.5]{HR} that $\widehat H\simeq \widehat G/A(\widehat G,H)$. The identification
$\widehat\CO_\k\simeq \k/\CO_\k$ follows since
(by self-duality relative to $\Psi$) we have 
$A(\widehat \k,\CO_\k)\simeq \CO_\k$.  In the same vein, we also
deduce by \cite[Theorem 23.25]{HR} that $\widehat \Gamma_n\simeq A(\widehat \k,\varpi^{-n}\CO_\k)
\simeq\varpi^n\CO_\k$.

As explained in the introduction, the reason why we are working with the higher units group $U_n$ for the configuration
space (and not the full units group $\CO_\k^\times$ or
the whole dilations group $\k^\times$)  is because we need  a well defined square root mapping to 
handle our $p$-adic Fuchs  calculus. The following statement results in an  essential way on the assumption that  $2$ belongs to $\CO_\k^\times$. 
\begin{prop}
\label{sigma}
The square function $\sigma:U_n\to U_n$, $u\mapsto u^2$, and the hyperbolic sine type function
$\phi:U_n\to \varpi^n\CO_\k$, $u\mapsto u-u^{-1}$ are
  $C^1$-homeomorphisms (see \cite[p.77]{Schikhof}) with $|\sigma'|_\k=|\phi'|_\k=1$.
\end{prop}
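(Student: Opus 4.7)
The plan is to exploit the parametrisation $u = 1 + \varpi^n a$ with $a \in \CO_\k$, under which both maps become essentially polynomial. A direct expansion yields
\[
\sigma(u) = 1 + \varpi^n\bigl(2a + \varpi^n a^2\bigr), \qquad
\phi(u) = u - u^{-1} = \frac{u^2-1}{u} = \varpi^n u^{-1}\bigl(2a + \varpi^n a^2\bigr),
\]
which confirms $\sigma(U_n) \subset U_n$ and $\phi(U_n) \subset \varpi^n\CO_\k$ (using $u \in \CO_\k^\times$). Since both $\sigma$ and $\phi$ are rational functions on $U_n$ with nowhere-vanishing denominators, their difference quotients extend continuously to the diagonal, so they are $C^1$ in Schikhof's sense.

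I would then compute the derivatives. One has $\sigma'(u) = 2u$; since $2 \in \CO_\k^\times$ by \pref{exclud} and $u \in \CO_\k^\times$, this gives $|\sigma'(u)|_\k = 1$. For $\phi$, $\phi'(u) = 1 + u^{-2}$; writing $u^{-2} = 1 + \varpi^n c$ with $c\in\CO_\k$ (possible because $U_n$ is a subgroup of $\k^\times$), we get $\phi'(u) = 2 + \varpi^n c$ with $|\varpi^n c|_\k \leq q^{-n} < 1 = |2|_\k$, so the ultrametric strict inequality forces $|\phi'(u)|_\k = 1$.

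For bijectivity of $\sigma$, given $v = 1 + \varpi^n b \in U_n$ I would apply Hensel's lemma to $f(X) = X^2 - v$ with initial approximation $X = 1$: the hypothesis $|f(1)|_\k = |\varpi^n b|_\k \leq q^{-n} < 1 = |f'(1)|_\k^2$ produces a unique root $u \in 1 + \varpi^n\CO_\k = U_n$. For bijectivity of $\phi$, given $w \in \varpi^n\CO_\k$, I rewrite $\phi(u) = w$ as $u^2 - wu - 1 = 0$ and complete the square: since $|w^2/4|_\k \leq q^{-2n}$ we have $1 + w^2/4 \in U_{2n} \subset U_n$, and the surjectivity of $\sigma$ just proved provides a unique $v \in U_n$ with $v^2 = 1 + w^2/4$; then $u := w/2 + v \in 1 + \varpi^n\CO_\k = U_n$ solves the equation, while the other root $w/2 - v$ lies in $-1 + \varpi^n\CO_\k$, hence \emph{outside} $U_n$ by \pref{exclud}.

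Finally, invoking the non-Archimedean inverse function theorem of \cite[p.77]{Schikhof}, a $C^1$ function with nowhere-vanishing derivative is a local $C^1$-homeomorphism at each point; combined with the global bijectivity established above, this upgrades $\sigma$ and $\phi$ to global $C^1$-homeomorphisms. The main obstacle is the bijectivity of $\phi$: the key observation is that solving the relevant quadratic reduces precisely to extracting a square root \emph{inside} $U_n$, and the selection of the correct root among the two candidates is made possible exactly because $-1 \notin U_n$ under the standing hypothesis of \pref{exclud}.
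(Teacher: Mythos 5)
Your proof is correct, but it takes a genuinely different route from the paper's. The paper proves the result with a single compactness trick: it factors $\sigma(u)-\sigma(v)=(u-v)(u+v)$ and $\phi(u)-\phi(v)=(u-v)(1+u^{-1}v^{-1})$, observes that $u+v$ and $1+u^{-1}v^{-1}$ lie in $2+\varpi^n\CO_\k$ and therefore have absolute value $1$ (since $2\in\CO_\k^\times$), concludes that $\sigma$ and $\phi$ are \emph{isometries}, and then invokes the classical fact that an isometry of a compact metric space onto (a space isometric to) itself is automatically surjective. Injectivity and continuity of the inverse are free, and $C^1$-ness comes directly from the explicit factorisations. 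Your argument instead splits the work: Hensel's lemma for surjectivity of $\sigma$, an explicit quadratic-formula computation (reducing to the just-proved surjectivity of $\sigma$) for $\phi$, and a separate check that the "other" root lands in $-1+\varpi^n\CO_\k$ and hence outside $U_n$ thanks to Proposition~\ref{exclud}. Both routes hinge on $2\in\CO_\k^\times$, but the paper uses it once to get the isometry, whereas you use it in several places (in the Hensel hypothesis $|f'(1)|_\k=1$, in $|w^2/4|_\k\leq q^{-2n}$, and in $U_n\cap(-1+\varpi^n\CO_\k)=\emptyset$). The paper's approach is shorter and more uniform, and it yields the stronger fact that $\sigma$ and $\phi$ are isometries (useful later when it substitutes under these maps); your approach is more constructive, actually producing the inverse maps via Hensel approximation and the quadratic formula, which makes the mechanism behind bijectivity more transparent.
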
 
\begin{proof}
Let $u,v\in U_n$. Then we have
$$
\sigma(u)-\sigma(v)=(u-v)(u+v), \quad \mbox{and} \quad
\phi(u)-\phi(v)=(u-v)(1+u^{-1}v^{-1}).
$$ 
Note then that both $u+v$ and  $1+u^{-1}v^{-1}$ belong to $2+\varpi^n\CO_\k$. Since
$2\in\CO_\k^\times$ (by assumption on $\k$) and $|\varpi|_\k<1$, we get $|u+v|_\k=|1+u^{-1}v^{-1}|_\k=1$ and thus:
$$
|\sigma(u)-\sigma(v)|_\k=|u-v|_\k=|\phi(u)-\phi(v)|_\k.
$$
 Hence $\sigma$ and $\phi$ are isometries. But they are also  surjections since $U_n$ and $\varpi^n\CO_\k$
 are compact metric spaces (for the induced metric of $\k$). This  shows that
$\sigma$ and $\phi$ are homeomorphisms.
That they are  of class $C^1$ is obvious since for all  $u\in U_n$, we have
\begin{align*}
\sigma'(u)&=\lim_{(x,y)\to (u,u)}\frac{\sigma(x)-\sigma(y)}{x-y}=\lim_{(x,y)\to (u,u)}x+y=2u\in2+\varpi^n\CO_\k,\\
\phi'(u)&=\lim_{(x,y)\to (u,u)}\frac{\phi(x)-\phi(y)}{x-y}=\lim_{(x,y)\to (u,u)}1+x^{-1}y^{-1}=1+u^{-2}\in 2+\varpi^n\CO_\k,
\end{align*}
by continuity of the addition, multiplication and inversion in $\k^\times$.
This also immediately implies that $|\sigma'(u)|_\k=|\phi'(u)|_\k=1$ for all $u\in U_n$.
\end{proof}

\begin{rmk}{\rm
It is easy to find in the literature a proof that the principal unit group of $\mathbb Q_p$, $p\ne 2$,
consists only on squares (see for instance \cite[Page 18]{Serre} or \cite[Proposition 3.4]{Sally}).
Our approach is based instead on general arguments of compact  metric spaces.}
\end{rmk}

\begin{dfn}
In the following, we define the square root function $U_n\to U_n$, $u\mapsto u^{1/2}$, as the 
reciprocal mapping
of the square function $\sigma:U_n\to U_n$.
\end{dfn}

Note that in characteristic zero, the group $X_n$ is isomorphic to the semidirect 
product $ \CO_{\k} \ltimes_{\alpha_n} \widehat{\CO}_{\k}$. Here
 $\widehat\CO_\k$  is identified with the discrete group $\k/\CO_\k$ 
 (whose elements are denoted
by $t+\CO_\k$) and 
 the extension homomorphism  $\alpha_n\in{\rm Hom}
\big(\CO_\k,\Aut(\widehat\CO_\k)\big)$ is given by
$\alpha_{n,x}( t+\CO_\k):=\exp_\k(\varpi^n x)t+\CO_\k$, for $(x,t+\CO_\k)\in\CO_\k\times \widehat\CO_\k$. Here,
with $p$ the characteristic of the residue field $\CO_\k/\varpi\CO_\k$, we denote by 
$$
\exp_\k :E_p:=\big\{t\in\k\,:\, |t|_\k< p^{1/(1-p)}\big\}\to 1+ E_p, \quad t\mapsto\sum_{k\geq 0} \frac{t^k}{k!},
$$ 
 the (isometrical) exponential map of $\k$ (which is defined if and only if $\k$ is of characteristic zero). Then, it is easy to show that the map
  $$ \CO_{\k} \ltimes_{\alpha_n} \widehat{\CO}_{\k} \rightarrow X_n ,
  \quad (x,  t+\CO_\k) \mapsto \big(\exp_{\k}(\varpi^nx), [\varpi^{-n}t]\big),$$
  provides the desired group isomorphism (which is well defined for any $n\in\N^*$ since $p>2$).  
  In the same vein, we  also have $G_n\simeq \CO_\k\ltimes_{\beta_n} \k$ where $\beta_{n,x}(t):=\exp_\k(\varpi^n x)t$, 
for $(x,t)\in\CO_\k\times \k$.
  Now,  the isomorphisms $X_n\simeq \CO_{\k} \ltimes_{\alpha_n} \widehat{\CO}_{\k}$ and $G_n\simeq \CO_\k\ltimes_{\beta_n} \k$
  allow to present our groups as semidirect products with fixed subgroups but with varying extension automorphisms. It thus permits us   to obtain a classification
  result  in case  of characteristic zero:

 \begin{prop}
 If $\k$ is of  characteristic zero and $n\ne m$ then, $G_n$ is not isomorphic to $G_m$ and
 $X_n$ is not isomorphic to  $X_m$.
 \end{prop}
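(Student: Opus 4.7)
The plan is to reduce each non-isomorphism to a statement about $\Q_p$-algebra automorphisms of $\k$ and then conclude using the fact that such automorphisms preserve the $p$-adic absolute value (by uniqueness of its extension from $\Q_p$ to a finite extension).

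For $G_n$, suppose $\phi\colon G_n\to G_m$ is a topological group isomorphism. A short calculation from \eqref{grouplaw} gives $[(u,t),(u',t')]=\bigl(1,\,ut(1-u')+u't'(u-1)\bigr)$, and varying the parameters shows $[G_n,G_n]=\{1\}\times\k$. Hence $\phi$ restricts to a continuous additive isomorphism $\phi_2\colon\k\to\k$ and induces a continuous group isomorphism $\phi_1\colon U_n\to U_m$ on the abelianisation. Since $(u,0)(1,t)(u,0)^{-1}=(1,ut)$, one obtains the intertwining $\phi_2(ut)=\phi_1(u)\,\phi_2(t)$ for $u\in U_n$, $t\in\k$. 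Put $c:=\phi_2(1)\in\k^\times$ and $\Phi:=c^{-1}\phi_2$; then $\Phi$ is a continuous $\Q_p$-linear bijection of $\k$ with $\Phi(1)=1$, $\Phi|_{U_n}=\phi_1$, and $\Phi(ut)=\Phi(u)\Phi(t)$ for $u\in U_n$. Applying this to $u=1+\varpi^n a$ with $a\in\CO_\k$ and expanding via additivity yields $\Phi(\varpi^n a t)=\Phi(\varpi^n a)\Phi(t)$; the case $a=1$ is $\Phi(\varpi^n t)=\Phi(\varpi^n)\Phi(t)$, and cancelling $\Phi(\varpi^n)\ne 0$ produces $\Phi(at)=\Phi(a)\Phi(t)$ for every $a\in\CO_\k$, $t\in\k$. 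Thus $\Phi|_{\CO_\k}$ is a continuous ring endomorphism of $\CO_\k$ (the image lies in $\CO_\k$ by integrality over $\Z_p$), and extending through the fraction field gives an element of $\Aut_{\Q_p}(\k)$ which must coincide with $\Phi$. Being an isometry, $|\Phi(\varpi^n)|_\k=q^{-n}$; on the other hand $\Phi(1+\varpi^n)=1+\Phi(\varpi^n)\in\Phi(U_n)=U_m$ forces $\Phi(\varpi^n)\in\varpi^m\CO_\k$, so $|\Phi(\varpi^n)|_\k\le q^{-m}$. Hence $n\ge m$, and the same argument applied to $\phi^{-1}$ gives $n=m$.

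For $X_n$ the same scheme works after Pontryagin duality. The analogous commutator computation gives $[X_n,X_n]=\{1\}\times\Gamma_n$, so any isomorphism $\phi\colon X_n\to X_m$ restricts to $\phi_2\colon\Gamma_n\to\Gamma_m$ and induces $\phi_1\colon U_n\to U_m$ with $\phi_2(u\cdot[t])=\phi_1(u)\cdot\phi_2([t])$. Dualising and using $\widehat{\Gamma_k}\simeq\varpi^k\CO_\k$ from the preliminaries produces a continuous additive isomorphism $B\colon\varpi^m\CO_\k\to\varpi^n\CO_\k$ intertwining multiplication by $v\in U_m$ with multiplication by $\sigma(v):=\phi_1^{-1}(v)\in U_n$. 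Writing $\sigma(\exp_\k y)=\exp_\k\psi(y)$ via the $p$-adic exponential, the relation becomes $B(\exp_\k(y)\,s)=\exp_\k(\psi(y))\,B(s)$; differentiating along the one-parameter family $\lambda\mapsto\exp_\k(\lambda y)$ at $\lambda=0$ yields $B(ys)=\psi(y)B(s)$ for $y,s\in\varpi^m\CO_\k$, and computing $B(y_1y_2 s)$ in two ways forces $\psi(y_1y_2)=\psi(y_1)\psi(y_2)$. Setting $\tilde\psi(z):=\psi(\varpi^m z)/\psi(\varpi^m)$, the argument of the previous paragraph shows $\tilde\psi\in\Aut_{\Q_p}(\k)$; combining $\psi(\varpi^{2m})=\psi(\varpi^m)^2$ (multiplicativity of $\psi$) with $\psi(\varpi^{2m})=\psi(\varpi^m)\tilde\psi(\varpi^m)$ (definition of $\tilde\psi$) gives $\psi(\varpi^m)=\tilde\psi(\varpi^m)$, so $|\psi(\varpi^m)|_\k=q^{-m}$ by the isometry property. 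Combined with $\psi(\varpi^m)\in\varpi^n\CO_\k$ this yields $m\ge n$, and symmetry closes the argument.

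I expect the main obstacle in both parts to be the bootstrap from the restricted semi-multiplicativity --- $\Phi(ut)=\Phi(u)\Phi(t)$ for $u\in U_n$ (resp.\ $B(ys)=\psi(y)B(s)$) --- to a genuine ring isomorphism of a $\Q_p$-algebra; once that step is secure, the isometry property of $\Aut_{\Q_p}(\k)$ closes both cases at once.
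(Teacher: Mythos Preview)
Your proposal is correct but follows a genuinely different route from the paper. After recording the isomorphisms $G_n\simeq\CO_\k\ltimes_{\beta_n}\k$ and $X_n\simeq\CO_\k\ltimes_{\alpha_n}\widehat{\CO}_\k$, the paper invokes a structural criterion of Kuzennyi: two semidirect products $A\ltimes_\alpha B$ and $A\ltimes_{\alpha'}B$ (with trivial centraliser of $B$ in $A$) are isomorphic iff $\alpha(A)$ and $\alpha'(A)$ are conjugate in $\Aut(B)$. Since $\Aut(\k)\simeq\k^\times$ and $\Aut(\widehat{\CO}_\k)\simeq\CO_\k^\times$ are abelian, conjugacy reduces to equality of the images, and one checks directly that $\beta_n(\CO_\k)=\beta_m(\CO_\k)$ (resp.\ $\alpha_n(\CO_\k)=\alpha_m(\CO_\k)$) forces $n=m$.

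You instead work bare-handed: from a putative isomorphism you isolate the commutator subgroup, read off the intertwining with the dilation action, and bootstrap the resulting semi-multiplicativity to a genuine $\Q_p$-algebra automorphism of $\k$ (directly for $G_n$; via Pontryagin duality and the $p$-adic exponential for $X_n$). The isometry property of $\Aut_{\Q_p}(\k)$ then pins down $n=m$. This trades a black-box citation for a longer but fully self-contained computation, and it makes the obstruction completely explicit: an isometry cannot carry $U_n$ onto $U_m$ unless $n=m$. One point worth tightening in the $X_n$ part: the ``differentiation'' of $B(\exp_\k(\lambda y)s)=\exp_\k(\lambda\psi(y))B(s)$ should be phrased as comparison of power-series coefficients in $\lambda\in\Z_p$, using that $B$ is continuous and $\Z_p$-linear (and that $\psi(\lambda y)=\lambda\psi(y)$ because $\sigma$ is a continuous homomorphism of pro-$p$ groups); this is routine but deserves a line.
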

 \begin{proof}
 We first prove the second claim.
 Set $Y_n:= \CO_{\k} \ltimes_{\alpha_n}  \widehat{\CO}_{\k}$.
 By the  isomorphism $X_n\simeq Y_n$, it suffices to proof that $Y_n\simeq Y_m$ if and only if $n=m$.
 Observe first that the centralizer of $ \widehat{\CO}_{\k}$ in $\CO_{\k}$ is trivial. Recall then that the
 topological automorphism 
 group of $\widehat \CO_\k$ is isomorphic to $\CO_\k^\times$ (which follows by a 
 straightforward generalization of the arguments given for $\mathbb Q_p$ in
 \cite[Example (e) page 434]{HR} and the fact that the topological 
 automorphism group of $\widehat \CO_\k$ is isomorphic to those of $\CO_\k$
 \cite[Theorem 26.9]{HR}) hence Abelian. Now, by \cite[Corollary 2]{kuz},
$Y_n\simeq Y_m$ if and only if $\alpha_n(\CO_\k)$ is conjugate to 
$\alpha_m(\CO_\k)$ in $\Aut(\widehat \CO_\k)$. But since the latter is Abelian, 
$Y_n\simeq Y_m$ if and only if $\alpha_n(\CO_\k)=\alpha_m(\CO_\k)$. Since finally 
$$
\alpha_n(\CO_\k)=
\Big\{\widehat \CO_\k\to \widehat \CO_\k,\quad  t+\CO_\k\mapsto \exp_\k(\varpi^n x)t+\CO_\k
\;:\; x\in \CO_\k\Big\},
$$
we have $\alpha_n(\CO_\k)=\alpha_m(\CO_\k)$ if and only if $m=n$, hence the result.

The case of $G_n$ is almost identical using the fact that $\Aut(\k)\simeq \k^\times$  (hence Abelian too).
 \end{proof}

\begin{rmk}
{\rm
In positive characteristic we conjecture that $X_n\simeq X_m$ and $G_n\simeq G_m$ if and only if $n=m$,  even  if there is no  group isomorphism
between $U_n$ and $\CO_{\k}$ (see \cite[page 72]{Schikhof}), hence even if there is certainly no
group isomorphism between $X_n$ and $\CO_{\k} \ltimes \widehat{\CO}_{\k}$
and between $G_n$ and $\CO_{\k} \ltimes \k$.
}
\end{rmk}

With our  choice of normalization for the Haar measure of $\k$
and with the nontrivial  previously fixed basic character $\Psi$, the Fourier transform
$$
\big(\CF_\k\, f\big )(s):=\int_\k f(t)\,\Psi(st)\,dt,
$$
becomes  unitary on $L^2(\k)$.
More generally, when an Abelian group
$H$ is not self-dual (for instance $\CO_\k$, $\CO_\k^\times$, $U_n$ and $\Gamma_n$), we define the 
Fourier transform
$$
\CF_H:L^2(H)\to L^2(\widehat H),\quad
f\mapsto \Big[\chi\mapsto \int_H f(h) \,\chi(h) \,dh\Big],
$$
and normalize the Haar measure of $H$ (or of $\widehat H$) in such a way that $\CF_H$
becomes a unitary operator.

We will frequently identify a function $\tilde{f} \in L^1(\Gamma_n)$ with the function $f \in L^1(\k)$ 
invariant under  translations in $\varpi^{-n}\CO_{\k}$.
With our choices of normalization, we therefore have 
 \begin{equation} 
 \label{intg} 
 \sum_{[t] \in \Gamma_n}  \tilde{f}([t])    =     q^{-n} \, \int_{\k} f(t) \, dt.
\end{equation}
In particular, if $f \in L^1(\k)$ is constant on the cosets of $\varpi^{-n}\CO_{\k}$ then $\CF_\k(f)$
is supported on $\varpi^n\CO_\k$ and using the identification $\widehat\Gamma_n\simeq \varpi^n
\CO_\k$, we get
\begin{align}
\label{ident-fourier}
\CF_\k\, f=q^n\,\CF_{\Gamma_n}\tilde f.
\end{align}

 We will also frequently use the following substitution formula  (see \cite[p.287]{Schikhof} for details):
\begin{equation*}
   \int_U   f \circ \vf(t)  \, |\vf'(t)|_{\k} \, dt=\int_V   f(t)  \, dt.
 \end{equation*}
Here, $U, \, V$ are compact open subsets of $\k$, $f : V \rightarrow \C$ is $L^1$ and 
$\vf : U \rightarrow V$ is a $C^1$-homeomorphism (see \cite[p.287]{Schikhof}) such that 
$\vf'(t) \neq 0$ for all $t \in U$. 
For instance, it follows by Proposition \ref{sigma}, that if $f : U_n \rightarrow \C$ 
and $h:\varpi^n\CO_\k\to\C$ are $L^1$ then
\begin{equation}
\label{change}
 \int_{U_n}  \, f(u^2)  \, du=  \int_{U_n} \,  f(u)  \, du
 \quad\mbox{and}\quad \int_{U_n}h(u-u^{-1})\,du=\int_{\varpi^n\CO_\k} h(x)\,dx.
 \end{equation}

We denote by $\CD(\k)$ the space of Bruhat-test functions 
  on $\k$ and by $\CD'(\k)$ the dual space of Bruhat-distributions  \cite{Bruhat}, endowed with the weak
   dual topology. We denote by $\langle T|\vf\rangle$ the evaluation of a distribution $T\in\CD'(\k)$ on a test
    function $\vf\in\CD(\k)$.  
      Since $\k$ is totally disconnected, $\CD(\k)$ coincides with the space of locally
   constant compactly supported functions on $\k$. Equivalently, $f\in \CD(\k)$ if and only if there
   exists $n,m\in\Z$ such that $f$ is supported on $\varpi^n\CO_\k$ and $f$
   is invariant under  translations in $\varpi^m\CO_\k$.
Recall also that the Fourier transform $\CF_\k$ is an homeomorphism of $\CD(\k)$ and of
   $\CD'(\k)$. Moreover, if  $T\in\CD'(\k)$ is supported on $\varpi^n\CO_\k$  then
   $\CF_\k(T)$ is invariant under  translations in $\varpi^{-n}\CO_\k$
   and vice versa.  Topologically, $\CD(\k)$ is the inductive limit of the sequence of Banach spaces
    consisting of functions supported in $\varpi^n\CO_\k$, constant on the cosets of
   $\varpi^m\CO_\k$ and endowed with the uniform norm.

   Similarly, we let $\CD(U_n)$, $\CD(X_n)$ 
be
   the spaces of Bruhat-test functions  on the groups $U_n$ and $X_n$. 
   Since $U_n$ is compact, $\CD(U_n)$ is the space of locally constant functions on $U_n$ and $\CD(X_n)$
 is the space of functions on $X_n$ which are locally
   constant in the variable $u\in U_n$ and have finite support in the variable $[t]\in \Gamma_n$.
  Here, $f$ is locally constant in $U_n$, is equivalent to the existence of $m\geq n$ such that 
$f$ is invariant under  dilations in $ U_m$. But  this is also equivalent to 
  $f$ is invariant under  translations in 
  $\varpi^m\CO_\k$. Indeed, if $f(uv)=f(u)$ for all $u\in U_n$ and $v\in U_m$ then for $x\in\varpi^m\CO_\k$,
  we have $f(u+x)=f(u(1-u^{-1}x))=f(u)$ since $u^{-1}\in\CO_\k^\times$ and thus $1-u^{-1}x\in U_m$. Conversely, if
  $f(u+x)=f(u)$ for all $u\in U_n$ and $x\in\varpi^m\CO_\k$ then for $v\in U_m$ we have $f(uv)=f(u+(v-1)u)=f(u)$
  since $(v-1)u\in\varpi^m\CO_\k$.

  A sequence $\{f_n\}_{n\in\N}$
  converges  in $\CD(U_n)$ if the $f_n$ are constant  on the cosets of $U_m$ in $U_n$  for
  a fixed integer $m\geq n$ and if the sequence converges uniformly on $U_n$. Similarly,
  a sequence $\{f_n\}_{n\in\N}$
  converges  in $\CD(X_n)$ if the $f_n$ are constant on the cosets of $U_m$ in $U_n$ (in the variable $u\in U_n$) for
  a fixed integer $m\geq n$, if they are supported (in the $[t]$-variable) in fixed finite subset of 
  $\Gamma_n$
   and if the sequence converges uniformly on $X_n$.
   (For  details see for instance the exposition given
  in \cite{SallyTaibleson}.) Note finally that the spaces $\CD(\k)$, $\CD'(\k)$ and $\CD(U_n)$, $\CD'(U_n)$
  also coincide with the Schwartz space and the space of tempered distributions as defined
  in \cite[section 9]{Bruhat}. However, here we reserve the notations $\CS(U_n)$ and $\CS'(U_n)$
  (as well as $\CS(X_n)$ and $\CS'(X_n)$) to denote other spaces of functions/distributions.
   
Since  the groups  $U_n$ and $X_n$
   are separable, all these spaces of test-functions and of distributions are nuclear
   (see \cite[Corollary 5 page 53]{Bruhat} and the remark that follows it) and satisfies the Schwartz
   kernel Theorem: given a continuous linear map $A$ from $\CD(G_1)$ to $\CD'(G_2)$
   ($G_1,G_2$ are $U_n$ or $X_n$) there exists a unique $T\in\CD'(G_1\times G_2)$ such that
   for all $\vf_j\in G_j$ the have $ \langle A(\vf_1)|\vf_2\rangle= \langle T|\vf_1\otimes\vf_2\rangle$
   (see \cite[Corollary 2 page 56]{Bruhat}). However $\CD(U_n)$ and $\CD(X_n)$ fail to be Fr\'echet
   (they are only LF-spaces). 

\subsection{The representation theory of  $G_n$ and $X_n$}
\label{RG}
With $\Psi$ the fixed unitary character of $({\bf  k},+)$ 
(trivial on $\CO_{\k}$ but not on $\varpi^{-1}\CO_{\k}$) and for $\theta \in \k^\times$, 
we set $\Psi_\theta(t):=\Psi(\theta t)$.
We define the  following representation  $\pi_{\theta}$ of the covariance group $G_n$ on the Hilbert space $L^2(U_n)$ of square integrable functions on the configuration
space:
\begin{equation}
\label{Utheta}
\pi_{\theta}(u,t)\varphi(u_0) :=\Psi_{\theta}( u_0^{-1}ut) \,
\varphi\big(u^{-1}u_0\big).
 \end{equation}
 
 Note that when $\theta \in \varpi^n\CO_{\k} \setminus \{0\}$,  the unitary operator 
 $\pi_{\theta}(a,t)$ only  depends on the class of $t$ in $\Gamma_n$. Hence, it defines a
  unitary representation  of the quotient group $X_n$ that we denote by $\widetilde{\pi}_{\theta}$. 
  It is (almost) immediate to see that $\pi_{\theta}$ is equivalent to ${\rm Ind}_\k^{G_n}(\Psi_\theta)$, the representation $\Psi_\theta\in\widehat \k$ of $\k$  induced to $G_n$.
  Analogously, if $\theta \in \varpi^n\CO_{\k} \setminus \{0\}$ 
then $\widetilde \pi_{\theta}$ is equivalent to the induced representation
${\rm Ind}_{\Gamma_n}^{G_n}(\Psi_\theta)$ where $\Psi_\theta$ is viewed as an element of $\widehat\Gamma_n$ in a natural  way.

From  Mackey's theory \cite{Mackey}, 
we know that all irreducible representations of $G_n=U_n\ltimes \k$
(and of $X_n=U_n\ltimes \Gamma_n$ too) are of the form 
${\rm Ind}_{G_\Theta}^{G_n}(\rho_\Theta \otimes \Psi_\Theta) $.  Here $\Theta$ is an orbit of 
the dual action of $U_n$ on $\widehat \k$  and
$\Psi_\Theta$ is any point in the orbit $\Theta$. Also,
$G_\Theta$ is the subgroup of $G_n$  given by $H_\Theta\ltimes\k$, where $H_\Theta$ is
 the stabilizer
(the little group) of $\Psi_\Theta$. Last, $\Psi_\Theta$ is  extended
to a character (a one-dimensional representation) of $G_\Theta$ in an obvious way
and  $\rho_\Theta$ is an irreducible representation of $H_\Theta$ extended to $G_\Theta$ too.
 For the trivial orbit $\Theta=\{0\}$, we get $H_\Theta = U_n$ and  $G_\Theta = G_n$. 
  Thus, ${\rm Ind}_{G_\Theta}^{G_n}(\rho_\Theta \otimes \Psi_\Theta ) $ is  one-dimensional and we 
  disregard
  such representations (which are of zero Plancherel measure). In contrast, if $\Theta$ is not trivial,
we get $H_{\Theta} = \{1 \}$  and $G_\Theta = \{ 1 \} \ltimes \k$. 
Therefore, the induced representation we get is ${\rm Ind}_\k^{G_n}(\Psi_\Theta)$ which is 
$\pi_{\theta}$ with $\Psi_\Theta=\Psi_\theta$. Since moreover 
${\rm Ind}_\k^{G_n}(\Psi_\Theta)$ is equivalent to ${\rm Ind}_\k^{G_n}(\Psi_{\Theta'})$ if and only if 
$\Theta=\Theta'$, we deduce that $\pi_{\theta}$ is equivalent to $\pi_{\theta'}$
if and only if $\theta'\theta^{-1}\in U_n$.
Of course,
similar considerations apply for the quotient group $X_n$. Hence we have obtained:

\begin{prop}
Any infinite dimensional unitary irreducible representation of $G_n$ is  unitarily equivalent to the representation $\pi_{\theta}$ for some $\theta\in\k^\times$. Moreover,
$\pi_{\theta}$ is equivalent to $\pi_{\theta'}$ if and only if $\theta'\theta^{-1}\in U_n$.
Analogously, any  infinite dimensional  unitary irreducible representation of $X_n$ is  unitarily equivalent to the representation $\widetilde \pi_{\theta}$  for some 
$\theta \in \varpi^n\CO_{\k} \setminus \{0\}$. Moreover,
$\widetilde \pi_{\theta}$ is equivalent to $\widetilde \pi_{\theta'}$ if and only if $\theta'\theta^{-1}\in U_n$.
 \end{prop}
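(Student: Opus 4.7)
The plan is to apply Mackey's little group method to the regular semidirect products $G_n = U_n \ltimes \k$ and $X_n = U_n \ltimes \Gamma_n$, relying on the dual action computation that is essentially already carried out in the paragraph preceding the statement. Since the abelian normal subgroups $\k$ and $\Gamma_n$ are second countable and the $U_n$-orbits on their Pontryagin duals are locally closed (each non-trivial orbit is the continuous image $U_n \cdot \theta$ of a compact set), the hypotheses of Mackey's theorem are satisfied and every infinite-dimensional unitary irreducible representation arises (up to equivalence) by inducing an irrep of the stabilizer subgroup from a non-trivial orbit representative.

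First I would identify the $U_n$-action on $\widehat{\k}$. Conjugation of $(1,t) \in G_n$ by $(u,0)$ (with the group law \eqref{grouplaw}) yields $(1, ut)$, so on $\widehat{\k} \sim \k$ under $\theta \leftrightarrow \Psi_\theta$, the dual action is $u \cdot \Psi_\theta = \Psi_{u^{-1}\theta}$. Two characters $\Psi_\theta, \Psi_{\theta'}$ then lie in the same orbit if and only if $\theta'\theta^{-1} \in U_n$. For any $\theta \in \k^\times$, the stabilizer in $U_n$ is trivial since $u^{-1}\theta = \theta$ forces $u = 1$ in a field; hence the little group is $\{1\}$ and $G_\Theta = \{1\} \ltimes \k$. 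By Mackey, the trivial orbit $\{0\}$ contributes only (one-dimensional) characters pulled back from $\widehat{U_n}$ via $G_n \to U_n$, while each non-trivial orbit contributes a single equivalence class of infinite-dimensional irreps, namely $\mathrm{Ind}_\k^{G_n}(\Psi_\theta)$, which the text has already identified with $\pi_\theta$ on $L^2(U_n)$.

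For the quotient $X_n = U_n \ltimes \Gamma_n$, I would repeat the argument verbatim after observing that $\widehat{\Gamma}_n$ is canonically identified with the annihilator $A(\widehat{\k}, \varpi^{-n}\CO_\k) \sim \varpi^n\CO_\k$ (as already noted in the preliminaries). The same conjugation computation gives the dual action $u \cdot \Psi_\theta = \Psi_{u^{-1}\theta}$ on $\widehat{\Gamma}_n$, where now $\theta$ ranges over $\varpi^n\CO_\k$; the stabilizer at $\theta \ne 0$ is again trivial since $u^{-1}\theta - \theta \in \varpi^n\CO_\k$ forces equality in $\varpi^n\CO_\k$, i.e.\ $u = 1$. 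Mackey's classification then yields the asserted description of the infinite-dimensional part of $\widehat{X_n}$ by $\widetilde{\pi}_\theta$ with $\theta \in \varpi^n\CO_\k \setminus \{0\}$, together with the equivalence criterion.

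There is no serious obstacle: the only point requiring care is the verification that the semidirect products are regular (type~I) so that Mackey's theorem gives a complete classification, but this is immediate from second countability of the groups together with local closedness of orbits. The remaining content—stabilizer triviality, orbit description, and the identification of the induced representation with the explicit formula \eqref{Utheta}—is a direct unwinding of definitions and has already been sketched in the text.
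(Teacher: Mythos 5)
Your proof is correct and follows essentially the same route as the paper: both apply Mackey's little group method to the semidirect products $G_n=U_n\ltimes\k$ and $X_n=U_n\ltimes\Gamma_n$, identify the dual action, observe that the stabilizer of a non-trivial character is trivial, discard the one-dimensional representations attached to the zero orbit, and identify the resulting induced representations with $\pi_\theta$ (resp.\ $\widetilde\pi_\theta$). You add a brief justification of the regularity hypothesis (second countability and local closedness of orbits) that the paper leaves implicit, which is a minor but welcome refinement.
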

\begin{rmk}
\label{restrict}
{\rm
There are infinitely many inequivalent irreducible representations of $G_n$ and $X_n$. But
if we restrict to $\theta \in \CO_{\k}^{\times}$ there are exactly $(q-1)  q^{n-1}$ 
inequivalent irreducible representations of $G_n$. Indeed, they are labelled by the elements of 
$\CO_{\k}^{\times}/U_n$, and $[\CO_{\k}^{\times}: U_n ] = [\CO_{\k}^{\times}:U_1 ] 
\times  [U_1 : U_2]\times\dots\times  [U_{n-1} : U_n]= (q-1)  q^{n-1}$ (which follows since $\CO_{\k}^{\times}/U_1$ is isomorphic to
the multiplicative group of the residue field and $U_{n-1} /U_n$ is isomorphic to the additive group 
of the residue field). Observe also that $\pi_{-\theta}$ is the contragredient representation 
of $\pi_{\theta}$.
}
\end{rmk}

Next we prove the square integrability of the  representations $\pi_{\theta}$ and 
$\widetilde{\pi}_{\theta}$:
\begin{prop}  
\label{squareint} 
For $\theta \in \k ^\times$ (resp$.$ for $\theta \in \varpi^n\CO_{\k} \setminus \{0\}$)
 the  representation $\pi_{\theta}$ of $G_n$ 
 (reps$.$ $\widetilde{\pi}_{\theta}$ of $X_n$) is square integrable. 
\end{prop}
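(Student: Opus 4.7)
The plan is to compute the matrix coefficients of $\pi_\theta$ explicitly and then apply Plancherel on $\k$ (and on $\Gamma_n$ for the quotient case). For $\varphi,\psi \in L^2(U_n)$, after making the change of variable $v = u^{-1}u_0$ in $U_n$ (which preserves the normalized Haar measure), one obtains
\[
\langle \pi_\theta(u,t)\varphi, \psi \rangle = \int_{U_n} \Psi_\theta(v^{-1} t)\, \varphi(v)\, \overline{\psi(uv)}\, dv.
\]
Viewing this as a function of $t \in \k$, I will rewrite it as a Fourier transform. Setting $s = \theta v^{-1}$, we have $dv = |\theta|_\k^{-1}\, ds$ when $s \in \theta U_n$ (since on $U_n$, the Haar measure coincides with $d^\times v$, which is invariant under the inversion and under multiplication by $\theta$ up to the factor $|\theta|_\k$). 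Thus
\[
\langle \pi_\theta(u,t)\varphi, \psi \rangle = |\theta|_\k^{-1}\, \CF_\k\bigl(F_u\bigr)(t), \qquad F_u(s) := \mathbb{1}_{\theta U_n}(s)\, \varphi(\theta s^{-1})\, \overline{\psi(u \theta s^{-1})}.
\]

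Applying Plancherel on $\k$ and then reversing the change of variable, the $t$-integral becomes
\[
\int_\k \bigl|\langle \pi_\theta(u,t)\varphi, \psi \rangle\bigr|^2 dt = |\theta|_\k^{-2} \int_{\theta U_n} |F_u(s)|^2\, ds = |\theta|_\k^{-1} \int_{U_n} |\varphi(v)|^2\, |\psi(uv)|^2\, dv.
\]
Integrating in $u \in U_n$ and translating $u \mapsto u v^{-1}$ in the compact group $U_n$ (which preserves $du$) separates the variables:
\[
\int_{G_n} \bigl|\langle \pi_\theta(g)\varphi, \psi \rangle\bigr|^2 dg = |\theta|_\k^{-1}\, \|\varphi\|_2^2\, \|\psi\|_2^2,
\]
which proves square integrability of $\pi_\theta$ and identifies $|\theta|_\k$ as the formal degree.

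For $\widetilde\pi_\theta$ with $\theta \in \varpi^n\CO_\k \setminus \{0\}$, the argument is the same but carried out on the quotient group $X_n$: the character $t \mapsto \Psi_\theta(v^{-1}t)$ is trivial on $\varpi^{-n}\CO_\k$ (since $\theta v^{-1} \in \varpi^n\CO_\k$), hence defines a character of $\Gamma_n$; applying Plancherel for $\Gamma_n$ (using $\widehat\Gamma_n \simeq \varpi^n\CO_\k$ together with the normalizations fixed in the preliminaries) replaces $\int_\k dt$ by $\sum_{[t]\in\Gamma_n}$ and yields the analogous identity with an explicit formal degree.

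The computation is essentially a Plancherel identity, so the only delicate point is bookkeeping the normalization constants coming from the change of variable $v \mapsto \theta v^{-1}$ between $U_n$ and $\theta U_n$, and from the identifications $\widehat\k \simeq \k$ and $\widehat\Gamma_n \simeq \varpi^n\CO_\k$ with the normalizations fixed after \fref{intg}; once these are tracked, square integrability follows immediately.
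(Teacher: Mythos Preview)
Your argument is correct and follows essentially the same route as the paper: express the matrix coefficient as a Fourier transform on $\k$ of a function supported on a translate of $U_n$, apply Plancherel, and then integrate over $U_n$ to separate variables. The only cosmetic differences are that the paper works first with test functions $\varphi_1,\varphi_2\in\CD(U_n)$ and extends by density (whereas you jump straight to $L^2$), and that the paper leaves the $\theta$ on the Fourier side via a dilation $t\mapsto\theta^{-1}t$ rather than absorbing it into the substitution $s=\theta v^{-1}$; both lead to the same identity $|\theta|_\k^{-1}\|\varphi\|_2^2\|\psi\|_2^2$.
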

\begin{proof} 
We first give the proof for the representation $\pi_{\theta}$ of $G_n$. 
 Let $\varphi_1, \, \varphi_2\in\CD(U_n)$. 
 Then, by unimodularity of $U_n$ we get
\begin{align*}
\int_{G_n }    | \langle \varphi_1 , \pi_{\theta}(g) \varphi_2 \rangle     |^2 \, dg
& = \int_{U_n\times\k } \Big |  \int_{U_n} \overline{\varphi_1}(u_0) \, \Psi_{\theta}(u_0^{-1}ut) \, 
 \varphi_2(u^{-1}u_0) du_0 \Big |^2 \, dudt\\
  &=
 \int_{U_n\times\k} \Big |  \int_{U_n}  \overline{\varphi_1}(u_0^{-1})  \, \Psi_{\theta}(u_0ut) \,  
 \varphi_2(u^{-1}u_0^{-1}) du_0 \Big |^2  \, dudt\\
 &=\int_{U_n\times\k}  \Big  |  \int_{U_n}  \overline{\varphi_1}(u_0^{-1}u)  \, 
 \Psi_{\theta}(u_0t)  \, \varphi_2(u_0^{-1}) du_0  \Big |^2 \, dudt.
 \end{align*}
 Let $ \mathds{1}_{U_n}$ be the characteristic function of $U_n\subset \k$. 
Introducing the family of  compactly supported locally constant functions on $\k$
 given by
$$
F_u := \big [u_0 \mapsto  \mathds{1}_{U_n}(u_0) \,  \overline{\varphi_1}(u_0^{-1}u)   \, \varphi_2(u_0^{-1}) 
\big ], \quad u\in U_n,
$$
 we get
\begin{align*}
\int_{G_n }     | \langle \varphi_1 , \pi_{\theta}(g) \varphi_2 \rangle   |^2 \, dg  & =
  \int_{U_n\times\k}    \big | \CF_{\k} ( F_u  )(\theta t)    \big |^2 \, dudt. 
\end{align*}
Then, the dilation $t \mapsto \theta^{-1}t$ and the Plancherel formula for the self-dual group $\k$ yield
\begin{align*}
\int_{G_n }     | \langle \varphi_1 , \pi_{\theta}(g) \varphi_2 \rangle   |^2 \, dg  &  = |\theta|_{\k}^{-1}  
\int_{U_n^2}      | \overline{\varphi_1}(u_0^{-1}u) \,  \varphi_2(u_0^{-1})   |^2 \, du du_0,
\end{align*}
so that, by the dilation  $u \mapsto  u_0u$ followed by the inversion $u_0 \mapsto u_0^{-1}$,
we finally deduce
\begin{equation}\label{formsquare} \int_{G_n }     | \langle \varphi_1 , 
\pi_{\theta}(g) \varphi_2 \rangle   |^2 \, dg   = |\theta|_{\k}^{-1} \,  \| \vf_1\|_2^2 \, \| \vf_2\|_2^2,
\end{equation}
which completes the proof of the first part since $\CD(U_n)$ is dense in $L^2(U_n)$.
The proof for the
representation  $\widetilde{\pi}_{\theta}$ of $X_n$ is entirely similar using instead of an  integral
on $\k$ a sum over the countable group $\Gamma_n$ and applying   the relations \eqref{intg}
and \eqref{ident-fourier}
to  be able to  use the Plancherel formula for $\k$.
\end{proof}

A direct consequence of Proposotion \ref{squareint} is that  $\pi_\theta$  is weakly contained in $\lambda$, the left regular representation  of $G_n$. 
Now, a straightforward computation shows 
that for $f\in L^2(G_n)$, the operator kernel of  $\pi_\theta(f)$ is given by $(\Id\otimes\CF_\k f)(u_1u_2^{-1},\theta u_2^{-1})\in L^2(U_n\times U_n)$. This easily entails the
following formula for the
Hilbert-Schmidt norm:
$$
\|\pi_\theta(f)\|_2^2=\int_{G_n} \mathds{1}_{\theta U_n}(t)\big|\Id\otimes\CF_\k f\big|^2(u,t)\,dudt.
$$
From this formula, we deduce that the orthogonal projector $P_\theta$ onto the closed left-invariant subspace of $L^2(G_n)$  in which $\lambda$ induces a representation 
unitarily equivalent to $\pi_\theta$ is given by:
$$
P_\theta=\Id\otimes\CF_\k^{-1}\,\mathds{1}_{\theta U_n}\,\CF_\k,\quad \theta\in\k^\times.
$$
Similar considerations apply for the representation $\tilde\pi_\theta$, $\theta\in \varpi^n\CO_{\k} \setminus \{0\}$, of the quotient group $X_n$ and
the associated projector reads:
$$
\tilde P_\theta=\Id\otimes\CF_{\Gamma_n}^{-1}\,\mathds{1}_{\theta U_n}\,\CF_{\Gamma_n},\quad \theta\in \varpi^n\CO_{\k} \setminus \{0\}.
$$

For any square integrable representation, and associated to a mother wavelet $\vf\in L^2(U_n)\setminus\{0\}$, we have a  unitary coherent-state transform
which immediately follows from the results of Duflo and Moore \cite{Duflo-Moore}:
$$
L^2(U_n)\to L^2(G_n),\quad \psi\mapsto\Big[g\in G_n\mapsto |\theta|_{\k}^{1/2}||\vf||_2^{-1} \langle \pi_{\theta}(g)\vf,\psi\rangle\Big].
$$
Indeed:
\begin{cor}
\label{ident}
For every $\vf_1, \, \vf_2  \in L^2(U_n)$, $\theta\in\k^\times$ and non-zero $\vf \in L^2(U_n)$, we have
\begin{equation*}
\langle \vf_1,\vf_2 \rangle = \dfrac{ |\theta|_{\k}}{||\vf||_2^2} \, \int_{G_n} \langle \vf_1, 
\pi_{\theta}(g) \vf \rangle \, \langle \pi_{\theta}(g) \vf, \vf_2 \rangle \, dg. 
\end{equation*}
Provided that 
$\theta \in \varpi^n\CO_{\k} \setminus \{0\}$, the same formula holds with $\widetilde \pi_{\theta}$ instead of $\pi_{\theta}$ 
and $\int_{X_n}$ instead of $\int_{G_n}$.
\end{cor}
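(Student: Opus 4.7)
The plan is to derive the sesquilinear identity by polarizing the quadratic identity \eqref{formsquare} obtained in the proof of \pref{squareint}. This is the standard route from the Duflo--Moore orthogonality relation (in its modulus-squared form) to the resolution-of-the-identity form.

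Concretely, I would introduce the coefficient (wavelet) map
\begin{equation*}
W_\vf : L^2(U_n) \to L^2(G_n), \qquad W_\vf(\psi)(g) := \langle \psi, \pi_\theta(g)\vf \rangle,
\end{equation*}
which by \eqref{formsquare} is well defined first on the dense subspace $\CD(U_n)$ and satisfies $\|W_\vf \psi\|_{L^2(G_n)}^2 = |\theta|_\k^{-1} \|\vf\|_2^2 \|\psi\|_2^2$ there, hence extends by continuity to a bounded operator on all of $L^2(U_n)$ with the same norm identity. Thus $W_\vf$ is a scalar multiple of an isometry, which is equivalent to the operator identity
\begin{equation*}
W_\vf^* W_\vf \;=\; |\theta|_\k^{-1} \, \|\vf\|_2^2 \;\Id_{L^2(U_n)}.
\end{equation*}
Polarizing this identity gives
\begin{equation*}
\langle W_\vf \vf_1, W_\vf \vf_2 \rangle_{L^2(G_n)} \;=\; |\theta|_\k^{-1} \, \|\vf\|_2^2 \, \langle \vf_1, \vf_2\rangle_{L^2(U_n)},
\end{equation*}
and unfolding the left-hand side as $\int_{G_n} \langle \vf_1, \pi_\theta(g)\vf\rangle \langle \pi_\theta(g)\vf, \vf_2\rangle \, dg$ and dividing through by $\|\vf\|_2^2 / |\theta|_\k$ yields exactly the stated formula.

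For the quotient representation $\widetilde{\pi}_\theta$ of $X_n$ (with $\theta \in \varpi^n\CO_\k \setminus \{0\}$), I would run the same argument verbatim, starting from the analogue of \eqref{formsquare} over $X_n$ mentioned at the end of the proof of \pref{squareint}, which is obtained by replacing the integral over $\k$ by a sum over $\Gamma_n$ via \eqref{intg} and invoking Plancherel through \eqref{ident-fourier}. There is essentially no obstacle beyond checking that polarization is licit, which is automatic: the sesquilinear form $(\vf_1,\vf_2)\mapsto\int_{G_n}\langle\vf_1,\pi_\theta(g)\vf\rangle\langle\pi_\theta(g)\vf,\vf_2\rangle\,dg$ is continuous on $L^2(U_n) \times L^2(U_n)$ by Cauchy--Schwarz in $L^2(G_n)$ combined with \eqref{formsquare}, so the identity extends from $\CD(U_n)$ to all of $L^2(U_n)$.
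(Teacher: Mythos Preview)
Your proposal is correct and follows essentially the same approach as the paper: the paper's proof is the one-liner ``this is just the polarized version of the identity \eqref{formsquare}, viewed as a quadratic form in the variable $\vf_1\in L^2(U_n)$ and setting $\vf_2=\vf$,'' and your argument is precisely this polarization spelled out in operator language via the coefficient map $W_\vf$. The only cosmetic difference is that the paper (just above the corollary) writes the coherent-state transform as $\psi\mapsto\langle\pi_\theta(g)\vf,\psi\rangle$, linear in $\psi$, whereas your $W_\vf(\psi)=\langle\psi,\pi_\theta(g)\vf\rangle$ is antilinear; this is immaterial for the modulus-squared identity and for the polarization step.
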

\begin{proof}
This is just the polarized version of the identity \eqref{formsquare}, viewed as 
an hermitian  form in the variable $\vf_1\in L^2(U_n)$ and setting $\vf_2=\vf$.
\end{proof}

\section{The p-adic Fuchs calculus}
\label{PAFC}
\subsection{The  quantization map  and the Wigner functions}
In this subsection, our aim is to construct a $G_n$-equivariant pseudo-differential calculus on
the phase space $X_n$. Our construction is an adaptation to the $p$-adic setting of
Unterberger's Fuchs calculus \cite{Un84}. We will eventually be forced to restrict ourself to the case where 
the representation  parameter is a unit
$\theta\in\CO_\k^\times$.
In view of Remark \ref{restrict}, it means that we only consider $(q-1)q^{n-1}$ inequivalent 
representations of $G_n$. But at the beginning, we may consider the parameter $\theta$  to be in $\CO_\k\setminus\{0\}$.

We let $\Sigma$ be the  operator  on $L^2(U_n)$ implementing the group inversion:
\begin{equation*}
\Sigma \, \varphi(u) :=  \varphi({u}^{-1}).
 \end{equation*}
By unimodularity, $\Sigma$ is bounded and this  highly simplifies the general discussion given in the introduction.

\begin{lem}
\label{omegexp}
Assume that $\theta\in\CO_\k\setminus\{0\}$.
Then the map $G_n \rightarrow \CB(L^2(U_n))$, $g \mapsto \pi_{\theta}(g)  \Sigma \pi_{\theta}^*(g)$ is 
constant on the  cosets of $H_n$ in $G_n$. Therefore, it defines a map 
$$
\Omega_{\theta} : X_n=G_n/H_n \rightarrow \CB(L^2(U_n)),
$$
which, with $\phi:U_n\to \varpi^n\CO_\k$ as given in Proposition \ref{sigma} and $[g] = (u,[t]) \in X_n$, reads:
\begin{equation*}
 \Omega_{\theta}([g])\vf(u_0) := \Psi_{\theta}\big( \phi(uu_0^{-1}) t\big)    \varphi(u^2 u_0^{-1}).
 \end{equation*}
\end{lem}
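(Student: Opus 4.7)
The plan has three ingredients: inverting group elements in $G_n$, verifying that the conjugation is constant on right $H_n$-cosets, and then a direct computation of the resulting operator.

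First, I would record the group-theoretic data. From the group law \eqref{grouplaw}, a short calculation gives $(u,t)^{-1}=(u^{-1},-ut)$, and consequently
\[
\pi_\theta(u,t)^*\vf(u_0)=\pi_\theta(u^{-1},-ut)\vf(u_0)=\Psi_\theta(-u_0^{-1}t)\,\vf(uu_0).
\]
I would also note that for $h=(1,s)\in H_n$, the operator $\pi_\theta(h)$ is pointwise multiplication by $u_0\mapsto\Psi_\theta(u_0^{-1}s)$, since the action on the argument is trivial.

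Next I would verify coset invariance, i.e.\ that $[\Sigma,\pi_\theta(h)]=0$ for every $h\in H_n$. A direct computation gives
\[
(\Sigma\pi_\theta(1,s)-\pi_\theta(1,s)\Sigma)\vf(u_0)=\bigl(\Psi_\theta(u_0s)-\Psi_\theta(u_0^{-1}s)\bigr)\vf(u_0^{-1})=\Psi_\theta(u_0^{-1}s)\bigl(\Psi_\theta(\phi(u_0)s)-1\bigr)\vf(u_0^{-1}).
\]
The key observation is that by Proposition \ref{sigma} we have $\phi(u_0)\in\varpi^n\CO_\k$ for $u_0\in U_n$; combined with $s\in\varpi^{-n}\CO_\k$ and the assumption $\theta\in\CO_\k\setminus\{0\}$, this yields $\theta\phi(u_0)s\in\CO_\k$, which lies in the kernel of $\Psi$. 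Hence the commutator vanishes. Since $(u,t)\cdot(1,s)=(u,t+s)$, this says $\pi_\theta(u,t+s)\Sigma\pi_\theta(u,t+s)^*=\pi_\theta(u,t)\Sigma\pi_\theta(u,t)^*$, so the map descends to $X_n$.

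Finally I would compute $\Omega_\theta([g])\vf(u_0)$ by composing the three operators in order. Applying $\pi_\theta(u,t)^*$, then $\Sigma$, then $\pi_\theta(u,t)$ gives
\[
\Omega_\theta([g])\vf(u_0)=\Psi_\theta(u_0^{-1}ut)\,\Psi_\theta(-u^{-1}u_0\,t)\,\vf(u^2u_0^{-1})=\Psi_\theta\bigl((uu_0^{-1}-u^{-1}u_0)t\bigr)\,\vf(u^2u_0^{-1}).
\]
Recognizing $uu_0^{-1}-u^{-1}u_0=\phi(uu_0^{-1})$ yields the announced formula. The only subtle point in the whole argument is the commutation step, which rests precisely on the arithmetic fact that $\phi$ takes values in $\varpi^n\CO_\k$, which is why the configuration space had to be chosen as $U_n$ rather than a larger subgroup of $\k^\times$; everything else is a mechanical unfolding of the definitions.
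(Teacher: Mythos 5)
Your proof is correct and follows essentially the same route as the paper: a direct computation using the explicit form of $\pi_\theta$ and $\Sigma$, with the $H_n$-invariance reduced to the arithmetic fact that $\theta\,\phi(U_n)\,\varpi^{-n}\CO_\k\subseteq\CO_\k$ together with the triviality of $\Psi$ on $\CO_\k$. The only (cosmetic) difference is ordering: you verify the abstract descent criterion $[\Sigma,\pi_\theta(H_n)]=0$ first and then compute the conjugated operator, whereas the paper computes $\pi_\theta(g)\Sigma\pi_\theta^*(g)$ directly and reads off the $H_n$-invariance from the resulting formula.
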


\begin{proof}
Let $(u,t) \in G_n$, $ u_0 \in U_n$ and $ \varphi \in L^2(U_n)$. Then we have
\begin{align*}
 \pi_{\theta}(u,t) \, \Sigma \, \pi^*_{\theta}(u,t) \vf(u_0) 
&= \Psi_{\theta}(u_0^{-1}ut) \, \big(\Sigma \, \pi^*_{\theta}(u,t) \vf\big)(u^{-1}u_0) \\
&= \Psi_{\theta}(u_0^{-1}ut)  \,  \Psi_{\theta}(-u_0u^{-1}t) \, \vf(u^2u_0^{-1}) 
= \Psi_{\theta}( \phi( uu_0^{-1} )t)   \, \vf(u^2u_0^{-1}).
\end{align*}
That the map $g \mapsto \pi_{\theta}(g)  \Sigma \pi_{\theta}^*(g)$ is invariant under translations in 
$H_n=\{1\}\times\varpi^{-n}\CO_\k$ follows from the fact that   $\phi( uu_0^{-1} )\in \varpi^{n}\CO_\k$
and that $\Psi_\theta$ is trivial on $\CO_\k$ since $\theta\in\CO_\k\setminus\{0\}$.
\end{proof}


We are now ready to introduce our $G_n$-equivariant    quantization map  on $X_n$:
 
\begin{dfn}
For $\theta\in\CO_\k\setminus\{0\}$, we
let  ${\bf \Omega}_{\theta} :  L^1(X_n) \rightarrow \CB(L^2(U_n))$  be the continuous linear map
given  by   
\begin{equation*}  
{\bf \Omega}_{\theta}(f)  := |\theta|_\k\,q^{n} \, \int_{X_n} f([g])  \, \Omega_{\theta}([g]) \,  d[g]. 
\end{equation*}
\end{dfn}
For $(g,[g'])\in G_n\times X_n$, we have by construction
$$
\pi_{\theta}(g)  \Omega_{\theta} ([g'])  \pi^*_{\theta}(g)= \Omega_{\theta} ([gg'])
=\Omega_{\theta} (g.[g']).
$$
 This immediately 
implies that the quantization map ${\bf \Omega}_{\theta}$ is $G_n$-equivariant: If  $g\in G_n$
and $f\in L^1(X_n)$, setting $f^g([g'])=f(g^{-1}.[g'])=f([g^{-1}g'])$, we have by $G_n$-invariance of the Haar measure of $X_n$ :
 \begin{equation}
  \label{equiv}
   \pi_{\theta}(g) \, {\bf \Omega}_{\theta} (f) \, \pi^*_{\theta}(g) = {\bf \Omega}_{\theta}(f^g).
     \end{equation}
Note that with $\lambda$ the left regular representation of the group $X_n$, we also have
 \begin{equation*}
   \pi_{\theta}(g) \, {\bf \Omega}_{\theta} (f) \, \pi^*_{\theta}(g) = {\bf \Omega}_{\theta}
   \big( \lambda_{[g]} f\big).
     \end{equation*}
Provided $\theta \in \varpi^n\CO_{\k}$,  the above equivariance relation also holds for the 
representation $\widetilde \pi_{\theta}$ of $X_n$. But this
observation is pointless since we will soon be forced
to work with $\theta\in\CO_\k^\times$. Moreover, since $\Omega_\theta([g])$ is self-adjoint, we get
$\bO_\theta(f)^*=\bO_\theta(\overline f)$.

Our main tool  for the analysis of our pseudo-differential calculus are 
 the matrix coefficients  of $\Omega_\theta$, viewed as an operator valued  function on $X_n$.
In analogy with ordinary quantum mechanics, we call them   the Wigner functions.

 \begin{dfn}
 Let $\theta\in\CO_\k\setminus\{0\}$.
For  $\varphi_1$, $\varphi_2 \in L^2(U_n)$ we let $W_{\varphi_1, \, \varphi_2}^{\theta}$
be the Wigner function:
\begin{equation*}
W_{\varphi_1, \, \varphi_2}^{\theta}: X_n\to \C,\quad [g]\mapsto 
 \big \langle \vf_1, 
\Omega_{\theta}([g]) \vf_2 \big \rangle. 
\end{equation*}
\end{dfn} 

One of the main properties of the Wigner functions is  regularity. 
To compare the next result with analogues in Archimedean pseudo-differential calculi (Weyl, Fuchs...) we have to remember
 that in the $p$-adic world, Bruhat's notions of smooth compactly supported function and of Schwartz function are identical.

 \begin{lem}
 \label{wignerbruhat}
 Let $\theta\in\CO_\k\setminus\{0\}$.
 Then the bilinear map
$$
(\vf_1, \, \vf_2 )\mapsto W^{\theta}_{\vf_1, \, \vf_2},
$$
sends continuously $\CD(U_n)\times \CD(U_n)$ to  $\CD(X_n)$.
 \end{lem}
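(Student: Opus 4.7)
The plan is to rewrite the Wigner function, after two changes of variable, as a Fourier transform on $\k$ of a compactly supported locally constant function, evaluated along the character $\Psi_\theta$. Both the finite support in $[t]$ and the uniform local constancy in $u$ will then follow from the standard support--smoothness exchange of $\CF_\k$, combined with the isometry property of $\phi$ established in \pref{sigma}.

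Concretely, writing $[g]=(u,[t])$ and using \lref{omegexp},
$$
W^\theta_{\vf_1,\vf_2}(u,[t])=\int_{U_n}\overline{\vf_1(u_0)}\,\vf_2(u^2u_0^{-1})\,\Psi_\theta\big(\phi(uu_0^{-1})t\big)\,du_0.
$$
The Haar-preserving substitution $v=uu_0^{-1}$ on $U_n$ puts this in the form $\int_{U_n}\overline{\vf_1(uv^{-1})}\vf_2(uv)\Psi_\theta(\phi(v)t)\,dv$, and then \pref{sigma} combined with the change of variables in \fref{change} lets me replace $v$ by $w=\phi(v)\in\varpi^n\CO_\k$ to obtain
$$
W^\theta_{\vf_1,\vf_2}(u,[t])=(\CF_\k F_u)(\theta t),\qquad
F_u(w):=\chi_{\varpi^n\CO_\k}(w)\,\overline{\vf_1(u\phi^{-1}(w)^{-1})}\,\vf_2(u\phi^{-1}(w)).
$$

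Next, picking $m\geq n$ such that both $\vf_1,\vf_2$ are $U_m$-invariant (equivalently, $\varpi^m\CO_\k$-translation invariant, by the discussion preceding the statement), and using that $\phi$ is a $\k$-isometry so that a $\varpi^m\CO_\k$-translation of $w$ moves $\phi^{-1}(w)$ by a factor in $U_m$, I read off that $F_u$ is $\varpi^m\CO_\k$-translation invariant in $w$. Consequently $\CF_\k F_u$ is supported in $\varpi^{-m}\CO_\k$, so $[t]\mapsto(\CF_\k F_u)(\theta t)$ is supported on the finite subset $\theta^{-1}\varpi^{-m}\CO_\k/\varpi^{-n}\CO_\k$ of $\Gamma_n$, independently of $u$. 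Uniform local constancy in $u$ is immediate: for $u'\in U_m$, the integrand only sees the $U_m$-class of $u\phi^{-1}(w)^{\pm1}$, so $F_{uu'}=F_u$ and hence $W^\theta_{\vf_1,\vf_2}(uu',[t])=W^\theta_{\vf_1,\vf_2}(u,[t])$.

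Finally, continuity of the bilinear map reduces to the observation that convergence in $\CD(U_n)$ comes with a common $U_m$-invariance level and is uniform; both features transfer to $F_u$ jointly in $(u,w)$, hence to the Fourier transform jointly in $(u,[t])$, on a fixed finite support in $[t]$ and with a fixed $U_m$-invariance in $u$. The main obstacle, beyond algebraic bookkeeping, is linking the $U_m$-invariance level of $\vf_1,\vf_2$ on the configuration space to the finite support of the Wigner function on $\Gamma_n$; this is exactly what the relation $|\phi'|_\k=1$ from \pref{sigma} provides.
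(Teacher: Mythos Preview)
Your proof is correct and follows essentially the same route as the paper's: rewrite the Wigner function via the substitution $u_0\mapsto uv^{-1}$ and then $w=\phi(v)$ as a $\k$-Fourier transform of a compactly supported locally constant function, and read off both the finite $[t]$-support and the $U_m$-local constancy in $u$ from the standard support--smoothness duality, with the isometry property of $\phi$ from \pref{sigma} doing the key translation between additive and multiplicative invariance. The only cosmetic differences are that the paper absorbs the factor $\theta$ into its auxiliary function $f_u$ (writing $W=\CF_\k^{-1}(f_u)(t)$ rather than $(\CF_\k F_u)(\theta t)$) and checks the $u$-local constancy directly on the integral before the $\phi$-change of variable rather than via $F_{uu'}=F_u$.
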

 
 \begin{proof}
To prove that $W^{\theta}_{\vf_1, \, \vf_2}\in\CD(X_n)$ when  $\vf_1, \, \vf_2 \in \CD(U_n)$,
we need to show that the map $U_n\to \C$, $u\mapsto W^{\theta}_{\vf_1, \, \vf_2}(u,[t])$ is locally 
constant and that the map $\Gamma_n\to \C$, $[t]\mapsto W^{\theta}_{\vf_1, \, \vf_2}(u,[t])$ has
finite support.
 
So let $(u,[t]) \in X_n$ and $\vf_1, \, \vf_2 \in \CD(U_n)$. Without lost of generality, we may assume that 
$\vf_1$ and $\vf_2$ possess the same invariance domain, say $U_m$ for some $m\geq n$.  We then have:
\begin{align}
\label{wig1}
\big  \langle \varphi_1 , \Omega_{\theta}(u,[t]) \varphi_2 \big \rangle =  \int_{U_n}  
\overline{\varphi_1}(u_0)  \, \Psi_{\theta}\big(\phi(uu_0^{-1} ) t\big)   \,   \varphi_2(u^2u_0^{-1}) \, du_0. 
\end{align}
 Then, since $\phi(u_0^{-1}) = -\phi(u_0)$, the dilation $u_0 \mapsto u u_0$ entails
\begin{equation*}
  \big  \langle \varphi_1 , \Omega_{\theta}(u,[t]) \varphi_2 \big \rangle   = \int_{U_n}  
  \overline{\varphi_1}(u_0 u) \, \overline{\Psi_{\theta}}\big( \phi( u_0)t\big)   \,   
   \varphi_2(uu_0^{-1}) \, du_0. 
  \end{equation*}
Take now $b\in U_m$ (the invariance domain of $\vf_1$ and of $\vf_2$). Since 
$\overline{\varphi_1}(u_0 ub)=\overline{\varphi_1}(u_0 u)$ and $\varphi_2(ubu_0^{-1})=
\varphi_2(uu_0^{-1})$ for all $u,u_0\in U_n$, we  deduce that  
$u\mapsto W^{\theta}_{\vf_1, \, \vf_2}(u,[t])$ is constant in the cosets of $U_m$ in $U_n$.

Next, observe that the (second) substitution formula \eqref{change} entails:  
\begin{align*}
 \label{inv} 
 \big  \langle \varphi_1 , \Omega_{\theta}(u,[t]) \varphi_2 \big \rangle  &  = 
  \int_{\varpi^n\CO_{\k} }  \overline{\varphi_1}\big(\phi^{-1}(z_0)  u\big) \, 
  \overline{\Psi_{\theta}}(z_0t)   \,    
 \varphi_2\big(u( \phi^{-1}(z_0))^{-1}\big) \, dz_0. 
  \end{align*}
Consider then the family of (uniformly in $u\in U_n$) compactly  supported functions on $\k$:
$$
f_{u} :=\Big[ z_0 \mapsto  |\theta|_{\k}^{-1}  \, \mathds{1}_{\theta\varpi^n\CO_{\k}}(z_0)  \,  
\overline{\varphi_1}\big(\phi^{-1}(\theta^{-1}z_0) u\big) \, 
\varphi_2\big(u(\phi^{-1}(\theta^{-1}z_0))^{-1}\big)\Big]
,\quad u\in U_n.
$$
Since $\phi^{-1}:\varpi^n\CO_\k\to U_n$ is continuous (it is an isometry) and $\vf_1,\vf_2$ are locally 
constant, it follows that the functions $z_0\mapsto \overline{\varphi_1}\big(\phi^{-1}(\theta^{-1}z_0) 
u\big)$
and $z_0\mapsto  \varphi_2\big(u(\phi^{-1}(\theta^{-1}z_0))^{-1}\big)$ are locally constant too, 
with invariance domain independent of $u\in U_n$.
Hence, $f_u$ is locally constant with invariance domain independent of $u\in U_n$. This means that $f_u\in\CD(\k)$ and thus 
$\CF_{\k}^{-1}( f_{u})\in\CD(\k)$.
But since we have
\begin{equation}
\label{wig-four}
\big  \langle \varphi_1 , \Omega_{\theta}(u,[t]) \varphi_2 \big \rangle    = 
\CF_{\k}^{-1}( f_{u})(t),
\end{equation} 
it follows that the map $\k\to\C$,
$t \mapsto  \big  \langle \varphi_1 , \Omega_{\theta}(u,[t]) \varphi_2 \big \rangle$, 
 has compact support (uniformly in $u\in U_n$).
So, the induced map $\Gamma_n\to\C$, 
$[t] \mapsto  \big  \langle \varphi_1 , \Omega_{\theta}(u,[t]) \varphi_2 \big \rangle$,  
has finite support (uniformly in $u\in U_n$).

Continuity of the map $\CD(U_n)\times \CD(U_n)\to\CD(X_n)$,
$(\vf_1,  \vf_2 )\mapsto W^{\theta}_{\vf_1,  \vf_2}$ follows from two  facts.
First,  we have shown that the invariance domain of $u\mapsto W^{\theta}_{\vf_1, \, \vf_2}
(u,[t])$ and
the support of the map $[t]\mapsto W^{\theta}_{\vf_1,  \vf_2}
(u,[t])$  only depends on the invariance domain of $\vf_1$ and $\vf_2$ but not on $\vf_1$ and $\vf_2$
themselves.  Second, 
we have the obvious estimate:
$$
\big|W^{\theta}_{\vf_1,  \vf_2}(u,[t])\big|= \big|\big \langle \vf_1, 
\Omega_{\theta}([g]) \vf_2 \big \rangle\big|
\leq \,{\rm Vol}(U_n)\,\|\Omega_{\theta}([g])\|\,\|\vf_1\|_\infty\,\|\vf_2\|_\infty=
q^{-n}\,\|\vf_1\|_\infty\,\|\vf_2\|_\infty.
$$
Take then two sequences $\{\vf_{\ell}^j\}_{\ell\in\N}\subset \CD(X_n)$, $j=1,2$,  which converge
to zero in $\CD(X_n)$. This means  that all the $\vf^j_{\ell}$ 
($\ell\in\N$ and  $j=1,2$) have the same invariance domain and the same support, and that $\{\vf^j_{\ell}\}_{\ell\in\N}$ 
($j=1,2$)
converges to zero in sup-norm. Hence all the terms of the double sequence 
$\big\{W^{\theta}_{\vf^1_{\ell_1},  \vf^2_{\ell_2}}\big\}_{\ell_1,\ell_2\in\N}\subset\CD(X_n)$
have the same invariance domain (in the variable $u\in U_n$) and the same support (in the variable
$[t]\in\Gamma_n$) and this double sequence converges to zero in sup-norm. 
 \end{proof}

Take now $f\in\CD(X_n)$ and $\vf_1,\vf_2\in\CD(U_n)$. Using Fubini's Theorem, we have
\begin{align*}
\big \langle \vf_1,  {\bf \Omega}_{\theta}(f) \vf_2  \big \rangle=
|\theta|_\k\,q^n\int_{U_n\times X_n} f([g])\,
\overline{\vf_1}(u) \,(\Omega_\theta([g])\vf_2\big)(u)\,du\,d[g]= |\theta|_\k\,q^n
\int_{X_n} f([g])\,W_{\varphi_1, \, \varphi_2}^{\theta}([g])\,d[g].
\end{align*}
This, together with Lemma \ref{wignerbruhat}, allows to extend the quantization map 
${\bf \Omega}_{\theta}$   from $L^1$-functions to distributions. In the following,
$\mathcal L\big(\CD(U_n),\CD'(U_n)\big)$ denotes the space of continuous linear maps
from $\CD(U_n)$ to $\CD'(U_n)$. Recall also that we denote by $\langle T|\vf\rangle$ the evaluation of a Bruhat 
distribution $T\in\CD'(U_n)$ on a Bruhat test function $\vf\in \CD(U_n)$.
 
 \begin{prop} 
 \label{extend}
 Let $\theta\in\CO_\k\setminus\{0\}$.
 The map ${\bf \Omega}_{\theta} : L^1(X_n) \rightarrow \CB(L^2(U_n))$ extends uniquely
 to a continuous linear map
 $ {\bf \Omega}_{\theta}:\CD'(X_n)\to
\mathcal L\big(\CD(U_n),\CD'(U_n)\big)$. This extension is given by 
 $$
 \big\langle{\bf \Omega}_{\theta}(F) \vf_2 \big|\vf_1\big\rangle:=|\theta|_\k\,q^n
 \big\langle F\big|W_{\overline\varphi_1, \, \varphi_2}^{\theta}\big\rangle,\quad F\in\CD'(X_n)\quad
\mbox{and}\quad \vf_1,\vf_2\in\CD(U_n).
 $$
 \end{prop}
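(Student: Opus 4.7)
The plan is to let the Fubini computation that immediately precedes the statement dictate the correct formula, and then to verify that the right-hand side makes sense for an arbitrary distribution $F\in\CD'(X_n)$ thanks to Lemma \ref{wignerbruhat}. Concretely, for $f\in L^1(X_n)$ embedded in $\CD'(X_n)$ via $f[\psi]=\int_{X_n} f\psi\,d[g]$, and for $\vf_1,\vf_2\in\CD(U_n)$, the displayed identity
$$
\big\langle \vf_1,\bO_\theta(f)\vf_2\big\rangle=|\theta|_\k\,q^n\int_{X_n}f([g])\,W^{\theta}_{\vf_1,\vf_2}([g])\,d[g]
$$
rewrites as $(\bO_\theta(f)\vf_2)[\vf_1]=|\theta|_\k\,q^n f[W^{\theta}_{\overline{\vf_1},\vf_2}]$, since the distributional pairing $T[\vf]=\int T\vf$ differs from the sesquilinear pairing $\langle\cdot,\cdot\rangle$ exactly by complex conjugation of the first slot. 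This shows that any extension must be given by the advertised formula on the dense subspace $L^1(X_n)$, and so fixes uniqueness once density and continuity are established.

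For an arbitrary $F\in\CD'(X_n)$ and a fixed $\vf_2\in\CD(U_n)$, I would define $\bO_\theta(F)\vf_2:\CD(U_n)\to\C$ by $\vf_1\mapsto |\theta|_\k q^n F\big[W^{\theta}_{\overline{\vf_1},\vf_2}\big]$. Linearity in $\vf_1$ holds because $\vf_1\mapsto\overline{\vf_1}$ is antilinear and $W^{\theta}_{\cdot,\vf_2}$ is antilinear in its first slot (as the inner product is), so the composition is linear. Continuity in $\vf_1$ is immediate from Lemma \ref{wignerbruhat}, which provides continuity of the Wigner map $\CD(U_n)\times\CD(U_n)\to\CD(X_n)$, composed with the continuity of $F$ on $\CD(X_n)$. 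Hence $\bO_\theta(F)\vf_2\in\CD'(U_n)$. Applying the same argument to the second slot of $W^\theta$ yields continuity of $\vf_2\mapsto \bO_\theta(F)\vf_2$ in the weak dual topology of $\CD'(U_n)$, so that $\bO_\theta(F)\in\mathcal L\big(\CD(U_n),\CD'(U_n)\big)$.

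Finally, equipping $\CD'(X_n)$ with its weak dual topology and $\mathcal L\big(\CD(U_n),\CD'(U_n)\big)$ with the topology of pointwise convergence (on $\CD(U_n)$, with $\CD'(U_n)$ weakly topologized), continuity of $F\mapsto\bO_\theta(F)$ is tautological from the defining formula: if $F_\alpha\to F$ weakly in $\CD'(X_n)$, then $F_\alpha[W^\theta_{\overline{\vf_1},\vf_2}]\to F[W^\theta_{\overline{\vf_1},\vf_2}]$ for every $\vf_1,\vf_2\in\CD(U_n)$. Uniqueness of the extension then follows from weak density of $L^1(X_n)$ (even of $\CD(X_n)$) in $\CD'(X_n)$, together with the $L^1$-case identity just recorded. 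The only genuine subtlety is the bookkeeping between the sesquilinear pairing $\langle\cdot,\cdot\rangle$ and the bilinear distributional pairing $T[\vf]$, which is what forces the complex conjugate on $\vf_1$ in the final formula; beyond this, Lemma \ref{wignerbruhat} does all the analytical work.
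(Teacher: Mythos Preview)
Your proposal is correct and is exactly the argument the paper has in mind: the paper does not give a separate proof of this proposition, merely noting that the Fubini identity together with Lemma~\ref{wignerbruhat} ``allows to extend the quantization map $\bO_\theta$ from $L^1$-functions to distributions.'' Your write-up simply spells out the continuity and uniqueness details (including the sesquilinear versus bilinear pairing bookkeeping) that the paper leaves implicit.
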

 
 In particular, we can give a meaning of $\bO_\theta(\mathds1_{X_n})$ ($\mathds1_{X_n}$ is the characteristic function of $X_n$) as a continuous operator from $\CD(U_n)$ to $\CD'(U_n)$. By 
definition  we have for all $\vf_1,\vf_2\in\CD(X_n)$:
\begin{align*}
 \big\langle{\bf \Omega}_{\theta}(\mathds 1_{X_n}) \vf_2 \big|\vf_1\big\rangle
&=q^n|\theta|_\k\int_{X_n}W_{\overline\vf_1,\vf_2}^\theta([g])\,d[g]\\
&=q^n|\theta|_\k\int_{U_n}\Big(\sum_{[t]\in\Gamma_n}W_{\overline\vf_1,\vf_2}^\theta(u,[t])\Big)du
=|\theta|_\k\int_{U_n}\Big(\int_\k W_{\overline\vf_1,\vf_2}^\theta(u,[t])\,dt\Big)du,
\end{align*}
where all the integrals are absolutely convergent by Lemma \ref{wignerbruhat}
and where we view $[t]\mapsto W_{\overline\vf_1,\vf_2}^\theta(u,[t])$ as a function on $\k$ invariant by translations in $\omega^{-n}\CO_\k$.
By \eqref{wig-four} we know that
$W_{\overline \vf_1,\vf_2}^\theta(u,[t])= 
\CF_{\k}^{-1}( f_{u})(t)$, where 
$$
f_{u} :=\Big[ z_0 \mapsto  |\theta|_{\k}^{-1}  \, \mathds{1}_{\theta\varpi^n\CO_{\k}}(z_0)  \,  
{\varphi_1}\big(\phi^{-1}(\theta^{-1}z_0) u\big) \, 
\varphi_2\big(u(\phi^{-1}(\theta^{-1}z_0))^{-1}\big)\Big]
,\quad u\in U_n.
$$
Hence we get
\begin{align*}
 \big\langle{\bf \Omega}_{\theta}(\mathds 1_{X_n}) \vf_2 \big|\vf_1\big\rangle&= |\theta|_{\k}\int_{U_n}  \CF_\k\CF_{\k}^{-1}( f_{u})(0)\,du
=|\theta|_{\k}\int_{U_n}f_u(0)\,du=\int_{U_n}{\vf_1}(u)\,\vf_2(u)\,du.
\end{align*}
Therefore, $\bO_\theta(\mathds1_{X_n})$ is just the natural injection $\CD(U_n)\hookrightarrow\CD'(U_n)$. (Hence $\bO_\theta(\mathds1_{X_n})={\rm Id}$ as bounded
operators
 on $L^2(U_n)$.)

Using \cite[Corollary 2 page 56]{Bruhat},
 it makes sense to talk about the Schwartz kernel of the operator ${\bf \Omega}_{\theta}
 (F)$, for $F$ a distribution:
  
\begin{lem}
\label{propOm}  
Let $\theta\in\CO_\k\setminus\{0\}$
and $F \in \CD'(X_n)$. Then  the Schwartz kernel  $\big[{\bf \Omega}_{\theta}(F)\big]$ of the continuous linear operator
${\bf \Omega}_{\theta}(F):\CD(U_n)\to\CD'(U_n)$ is  (with a little abuse of notation
and identifying  $\widehat\Gamma_n$ with $\varpi^n\CO_\k$ as usual) given by:
 \begin{equation}
 \label{KER}
\big[{\bf \Omega}_{\theta}(F)\big] (u_0,u) =    |\theta|_\k\,q^n \,  \big(\Id \otimes \CF_{\Gamma_n} F\big)
 \Big(u^{1/2}u_0^{1/2},
 \theta \phi\big( u^{1/2}u_0^{-1/2} \big)\Big),  \quad u,u_0 \in U_n.
 \end{equation}
\end{lem}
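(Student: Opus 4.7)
The plan is to first establish the kernel formula for test-function $F \in \CD(X_n)$ by a direct Fubini-plus-change-of-variables calculation, and then extend it to distributions $F \in \CD'(X_n)$ by continuity, using the Schwartz kernel theorem together with Proposition \ref{extend}.

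For $F \in \CD(X_n)$ and $\vf \in \CD(U_n)$, I would start from the definition of $\bO_\theta(F)$ and the formula for $\Omega_\theta$ from Lemma \ref{omegexp} to write
\[
\bO_\theta(F)\vf(u_0) \;=\; |\theta|_\k q^n \int_{U_n}\sum_{[t]\in\Gamma_n} F(v,[t])\,\Psi\!\big(\theta\,\phi(vu_0^{-1})\,t\big)\,\vf(v^2 u_0^{-1})\,d[t]\,dv.
\]
The natural substitution is $v = u^{1/2}u_0^{1/2}$, equivalently $u = v^2 u_0^{-1}$. Proposition \ref{sigma} ensures that the square root map is a $C^1$-homeomorphism of $U_n$ with $|\sigma'|_\k = 1$, so together with commutativity of $U_n$ this substitution is a volume-preserving homeomorphism of $U_n$ onto itself. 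Under it, $\phi(vu_0^{-1}) = \phi(u^{1/2}u_0^{-1/2})$, and the expression above becomes $\int_{U_n} K(u_0,u)\,\vf(u)\,du$ with
\[
K(u_0,u) \;=\; |\theta|_\k q^n \sum_{[t]\in\Gamma_n} F\big(u^{1/2}u_0^{1/2},[t]\big)\,\Psi\!\big(\theta\,\phi(u^{1/2}u_0^{-1/2})\,t\big).
\]

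Next I would recognize the $[t]$-sum as a partial Fourier transform on $\Gamma_n$. Since $\phi$ takes values in $\varpi^n\CO_\k$ and $\theta\in\CO_\k$, the quantity $\theta\phi(u^{1/2}u_0^{-1/2})$ lies in $\varpi^n\CO_\k$, which under the identification $\widehat{\Gamma_n}\simeq\varpi^n\CO_\k$ defines a legitimate character $[t]\mapsto\Psi(\theta\phi(u^{1/2}u_0^{-1/2})t)$ of $\Gamma_n$. Thus the sum equals $(\Id\otimes\CF_{\Gamma_n}F)\big(u^{1/2}u_0^{1/2},\theta\phi(u^{1/2}u_0^{-1/2})\big)$, giving the claimed formula \eqref{KER} for $F\in\CD(X_n)$.

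Finally, to extend to $F\in\CD'(X_n)$, note that the map $\Phi : U_n\times U_n \to U_n\times\theta\varpi^n\CO_\k$, $(u_0,u)\mapsto \big(u^{1/2}u_0^{1/2},\,\theta\phi(u^{1/2}u_0^{-1/2})\big)$, is a $C^1$-homeomorphism with Jacobian of constant absolute value (again by Proposition \ref{sigma}), so pullback by $\Phi$ is a continuous operation on Bruhat distributions. Hence the right-hand side of \eqref{KER} defines a distribution on $U_n\times U_n$ depending continuously on $F\in\CD'(X_n)$ for the weak topology. The left-hand side, namely the Schwartz kernel of $\bO_\theta(F)$ (which exists by \cite[Corollary 2 page 56]{Bruhat}), is also continuous in $F$ by Proposition \ref{extend} and Lemma \ref{wignerbruhat}. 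Since the two distributions coincide on the dense subspace $\CD(X_n)\subset\CD'(X_n)$, they coincide on all of $\CD'(X_n)$.

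The main obstacle I expect is the last step: making precise the pullback of the partial Fourier transform of a distribution under the nonlinear change of variables $\Phi$, and checking that both sides of \eqref{KER} are genuinely continuous in $F$ in the weak topology of $\CD'(X_n)$. Once this continuity is in place, the density argument closes the proof with no further computation.
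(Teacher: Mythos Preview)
Your proposal is correct and follows essentially the same strategy as the paper: establish the kernel formula for $F\in\CD(X_n)$ by a direct computation and then extend to $\CD'(X_n)$ by density and continuity. The only cosmetic difference is that the paper starts from the Wigner-function pairing $(\bO_\theta(F)\vf_2)[\overline{\vf_1}]=|\theta|_\k q^n\,F[W^\theta_{\vf_1,\vf_2}]$ and performs the change of variables in two steps ($u\mapsto u^{1/2}$ followed by $u\mapsto uu_0$), whereas you work pointwise with $\bO_\theta(F)\vf(u_0)$ and make the single substitution $v=u^{1/2}u_0^{1/2}$; the two computations are equivalent, and your discussion of the continuity of the right-hand side (via pullback by the $C^1$-homeomorphism $\Phi$) is actually more explicit than the paper's one-line appeal to continuity.
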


\begin{proof}
By continuity of the map $\CD'(X_n)\to \CL(\CD(U_n),\CD'(U_n))$, $F\mapsto  {\bf \Omega}_{\theta}(F) $, and of the map $\CD'(X_n)\to \CD'(U_n\times U_n)$
  which assigns to $F$
the RHS of \eqref{KER}  and
by density of $\CD(X_n)$ in $\CD'(X_n)$, it suffices to prove the formula  for symbols in $\CD(X_n)$.
So, let $F \in  \CD(X_n)$ and  $\varphi_1,\vf_2 \in \CD(U_n)$. Then, we have by  \eqref{wig1} and by Fubini's Theorem:
\begin{align*}
\big\langle{\bf \Omega}_{\theta}(F) \vf_2 \big|\overline \vf_1\big\rangle&=
|\theta|_\k\,q^n \big\langle F\big|W_{\varphi_1, \, \varphi_2}^{\theta}\big\rangle\\
& =|\theta|_\k\,q^n\sum_{[t]\in\Gamma_n}\int_{U_n} F(u,[t])\Big(
  \int_{U_n}  \overline{\varphi_1}(u_0)  \, \Psi_{\theta}\big(\phi(uu_0^{-1} ) t\big)  
   \,   \varphi_2(u^2u_0^{-1}) \, du_0\Big)du\\
   &=|\theta|_\k\,q^n \int_{U_n\times U_n} \big(\Id\otimes \CF_{\Gamma_n} F\big)
   \big(u,\theta \phi(uu_0^{-1})\big)  \,\overline{\varphi_1}(u_0)\,  \varphi_2(u^2u_0^{-1}) \, du_0\,du.
\end{align*}
Now, the (first) substitution formula \eqref{change} followed by the translation $u \mapsto uu_0$
gives
\begin{align*}
\big\langle{\bf \Omega}_{\theta}(F) \vf_2 \big|\overline \vf_1\big\rangle&=
|\theta|_\k\,q^n  \int_{U_n\times U_n} \big(\Id\otimes \CF_{\Gamma_n} F\big)
   \big(u^{1/2},\theta \phi(u^{1/2}u_0^{-1})\big)  \,\overline{\varphi_1}(u_0)\,  \varphi_2(uu_0^{-1}) \, 
   du_0\,du\\
   &=
|\theta|_\k\,q^n \int_{U_n\times U_n} \big(\Id\otimes \CF_{\Gamma_n} F\big)
   \big(u^{1/2}u_0^{1/2},\theta \phi(u^{1/2}u_0^{-1/2})\big)  \,\overline{\varphi_1}(u_0)\,  \varphi_2(u) \, 
   du_0\,du,
\end{align*}
which completes the proof.
\end{proof}
\begin{rmk}
\label{INJ}
{\rm 
Provided $\theta \in  \CO_{\k}^{\times}$, Lemma \ref{propOm} also shows that the map $\CD'(X_n)\to \CD'(U_n\times U_n)$, which assigns to a symbol $F$
the Schwartz kernel of the linear operator $\bO_\theta(F)$,  is an injection. This proves that the quantization map ${\bf \Omega}_{\theta}:\CD'(X_n)\to
\mathcal L\big(\CD(U_n),\CD'(U_n)\big)$ is injective when $\theta \in  \CO_{\k}^{\times}$.
}
\end{rmk}

From Lemma \ref{propOm}, we can easily determine some particular classes of pseudo-differential operators in the $p$-adic Fuchs calculus.
\begin{cor}
Let $\theta\in\CO_\k\setminus\{0\}$.
If $F\in\CD'(X_n)$ is invariant under  left translations in $\Gamma_n$ (respectively in $U_n$), then ${\bf \Omega}_{\theta}(F)$
is a multiplication (respectively convolution) operator.
\end{cor}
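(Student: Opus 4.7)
The plan is to deduce both statements directly from the explicit kernel formula \eqref{KER} of \lref{propOm}, after interpreting each invariance hypothesis as a separation-of-variables statement for $F$. Working in $X_n=U_n\ltimes\Gamma_n$, the subgroup inclusions $\Gamma_n=\{1\}\times\Gamma_n\hookrightarrow X_n$ and $U_n=U_n\times\{[0]\}\hookrightarrow X_n$ act on $X_n$ by left translation as
\[
(1,[s]).(u,[t])=(u,[u^{-1}s+t]),\qquad (v,[0]).(u,[t])=(vu,[t]).
\]
Since $u\in U_n\subset\CO_\k^\times$ preserves $\varpi^{-n}\CO_\k$, invariance under $\Gamma_n$ is equivalent to $F$ being independent of $[t]$, and invariance under $U_n$ is equivalent to $F$ being independent of $u$.

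In the multiplication case, we may write $F=G\otimes\mathbf 1_{\Gamma_n}$ with $G\in\CD'(U_n)$, so that $(\Id\otimes\CF_{\Gamma_n})F$ is $G$ tensored with a Dirac mass at $0\in\widehat\Gamma_n\cong\varpi^n\CO_\k$. Plugging this into \eqref{KER} forces $\theta\,\phi(u^{1/2}u_0^{-1/2})=0$, and since $\theta\neq 0$ and $\phi:U_n\to\varpi^n\CO_\k$ is a homeomorphism vanishing only at $1$ (by \pref{sigma}), this pins $u=u_0$. On the diagonal one has $u^{1/2}u_0^{1/2}=u$, so the kernel collapses to a Dirac distribution on the diagonal weighted by a constant multiple of $G(u)$, which is precisely the kernel of a multiplication operator.

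In the convolution case, $F=\mathbf 1_{U_n}\otimes H$ with $H\in\CD'(\Gamma_n)$, so $(\Id\otimes\CF_{\Gamma_n})F(u,\eta)=\widehat H(\eta)$ is independent of $u$. By \eqref{KER} the kernel of $\bO_\theta(F)$ is then a function of the single variable $w:=u^{1/2}u_0^{-1/2}$; since $U_n$ is abelian and the square root function is a bijection (\pref{sigma}), $w$ is recoverable from $w^2=uu_0^{-1}$, so the kernel depends only on $uu_0^{-1}$, which is the defining property of a convolution kernel on the abelian group $U_n$.

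The only genuine delicacy is to make rigorous the Dirac-mass collapse in the multiplication case. My preferred route is to establish the identity first for $F\in\CD(X_n)$ satisfying the stated invariance, where every integral and sum arising in the proof of \lref{propOm} is absolutely convergent and the collapse reduces to the elementary fact that $\sum_{[t]\in\Gamma_n}\Psi_\theta(\phi(uu_0^{-1})t)$ is supported on $\{u=u_0\}$; then one extends by continuity of $F\mapsto\bO_\theta(F)$ and $F\mapsto(\Id\otimes\CF_{\Gamma_n})F$ together with the density of $\CD(X_n)$ in $\CD'(X_n)$.
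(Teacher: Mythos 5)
Your main argument — decode the invariance hypotheses as separation of variables in $F$, then read off the structure of the kernel from formula \eqref{KER} of \lref{propOm} — is exactly the paper's argument, and it is already rigorous as it stands because \lref{propOm} is proven for arbitrary $F\in\CD'(X_n)$. In the multiplication case, plugging $\Id\otimes\CF_{\Gamma_n}(G\otimes\mathbf 1_{\Gamma_n})=G\otimes\delta_0$ into \eqref{KER} immediately gives a kernel supported on the diagonal because $\phi$ is a homeomorphism vanishing only at $1$; in the convolution case, the kernel depends only on $u^{1/2}u_0^{-1/2}$, hence (as you note, via the bijectivity of $\sigma$) only on $uu_0^{-1}$. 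The paper's only extra ingredient is the Jacobian identity $|\phi'|_\k=|\sigma'|_\k=1$ from \pref{sigma}, which pins down the precise normalization ($\bO_\theta(T\otimes\mathbf 1)$ is multiplication by $T$, not merely by a constant multiple); since the corollary only asserts that the operator \emph{is} a multiplication operator, your weaker claim suffices.

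However, your closing paragraph proposes a rigorization that cannot work: a $\Gamma_n$-invariant element of $\CD(X_n)$ must have its $[t]$-support either empty or all of $\Gamma_n$, and $\Gamma_n$ is infinite, so the only $\Gamma_n$-invariant Bruhat test function on $X_n$ is zero. You therefore cannot first prove the identity on invariant test functions and then pass to the limit — the class you are proposing to approximate within is $\{0\}$. Likewise the "elementary fact'' about $\sum_{[t]\in\Gamma_n}\Psi_\theta(\phi(uu_0^{-1})t)$ is not an absolutely convergent sum for $u\neq u_0$; it is only meaningful distributionally, via $\CF_{\Gamma_n}(\mathbf 1)=c\,\delta_0$. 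Fortunately none of this is needed: \eqref{KER} already encodes the distributional Fourier computation for general $F\in\CD'(X_n)$ (its proof is where the density argument lives), and your primary argument of plugging the decomposed $F$ into \eqref{KER} is both correct and complete. You should simply delete the final paragraph, or replace it by the observation that the Dirac collapse is already built into \eqref{KER}.
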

\begin{proof}
Assume  first that $F\in\CD'(X_n)$ is invariant under  left translations in $\Gamma_n$. This implies (with usual abuse of notation) that $F(u,[t])=T(u)$
for some $T\in\CD'(U_n)$. Hence, with $\delta_t$ the Dirac measure
on $\k$ supported at $t\in\k$, the Schwartz kernel of the operator  ${\bf \Omega}_{\theta}(F)$ is given by
$$
 |\theta|_\k\
T \big(u^{1/2}u_0^{1/2}\big)\,
 \delta_0\big(\theta \phi( u^{1/2}u_0^{-1/2}) \big).
 $$
 Since $|\phi'(u)|_\k=|\sigma'(u)|_\k=1$ by Proposition \ref{sigma}, the  Schwartz kernel of  ${\bf \Omega}_{\theta}(F)$ is therefore given by  $T(u_0)\, \delta_{u_0}\big(u)$.
 Hence, ${\bf \Omega}_{\theta}(F):\CD(U_n)\to\CD'(U_n)$ is the operator of multiplication by $T\in\CD'(U_n)$.
 
 Assume then that $F\in\CD'(X_n)$ is invariant under  left translations in $U_n$. This implies (with usual abuse of notation) that $F(u,[t])=T([t])$
for some $T\in\CD'(\Gamma_n)$. Hence the Schwartz kernel of  ${\bf \Omega}_{\theta}(F)$ is given by
$$
 |\theta|_\k\,q^n \,   \CF_{\Gamma_n} T \big(\theta \phi( u^{1/2}u_0^{-1/2}) \big).
$$
This shows that ${\bf \Omega}_{\theta}(F):\CD(U_n)\to\CD'(U_n)$ is the operator of convolution by the distribution of $U_n$ given by
$\big[u\mapsto |\theta|_\k\,q^n \,   \CF_{\Gamma_n} T \big(\theta \phi( u^{-1/2})\big)\big]$.
\end{proof}

The next result is fundamental for us since it will allow to invert the quantization map $\bO_\theta$.
 For  the invertibility  property to hold true, we need to restrict the parameter space to be the  multiplicative
    group of  units $\CO_\k^\times$. 
 
 In what follows, $\CL^2(L^2(U_n))$ denotes the Hilbert space of Hilbert-Schmidt operators on 
 $L^2(U_n)$. More generally, we denote by $\CL^p(L^2(U_n))$, $p\in[1,\infty)$, 
 the $p$-th Schatten class of  operators on $L^2(U_n)$.

\begin{prop}
\label{Omegaunitar} 
Let $\theta \in \CO_{\k}^{\times}$. Then the map $ q^{-n/2}{\bf \Omega}_{\theta}:\CD'(X_n)\to
\mathcal L\big(\CD(U_n),\CD'(U_n)\big)$, restricts to 
a surjective isometry from   $L^2(X_n)$ to $\CL^2( L^2(U_n) )  $.
 \end{prop}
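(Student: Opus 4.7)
The plan is to prove the isometry property first by a direct kernel computation on the dense subspace $\CD(X_n) \subset L^2(X_n)$, and then deduce surjectivity from \lref{lemwigner1}. The key input is the explicit formula for the Schwartz kernel of $\bO_\theta(F)$ provided by \lref{propOm}.

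First, I would fix $F \in \CD(X_n)$ and use \eqref{KER} together with $|\theta|_\k = 1$ (since $\theta \in \CO_\k^\times$) to write
\[
\|\bO_\theta(F)\|_{\CL^2}^2 \;=\; q^{2n}\int_{U_n\times U_n} \bigl|\bigl(\Id\otimes\CF_{\Gamma_n}F\bigr)\bigl(u^{1/2}u_0^{1/2},\,\theta\phi(u^{1/2}u_0^{-1/2})\bigr)\bigr|^2 du\,du_0.
\]
Next I would perform the change of variables $(u,u_0)\mapsto (a,b):=(u^{1/2}u_0^{1/2},\,u^{1/2}u_0^{-1/2})$, whose inverse is $(u,u_0)=(ab,ab^{-1})$. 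Because $\sigma$ and the multiplication/inversion on $U_n$ are $C^1$-homeomorphisms with Jacobians of $\k$-absolute value $1$ (using $2\in\CO_\k^\times$ and \pref{sigma}), this transformation preserves $du\,du_0$ on $U_n\times U_n$. A second substitution $b\mapsto c:=\theta\phi(b)$, again with $|\theta|_\k=|\phi'|_\k=1$, maps $U_n$ bijectively and measure-preservingly onto $\theta\varpi^n\CO_\k=\varpi^n\CO_\k$ by \eqref{change}. Hence
\[
\|\bO_\theta(F)\|_{\CL^2}^2 \;=\; q^{2n}\int_{U_n}\int_{\varpi^n\CO_\k}\bigl|\bigl(\Id\otimes\CF_{\Gamma_n}F\bigr)(a,c)\bigr|^2 dc\,da.
\]

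The delicate bookkeeping step is the Plancherel identity on $\Gamma_n$: under the identification $\widehat\Gamma_n\sim\varpi^n\CO_\k$, the Haar measure on $\widehat\Gamma_n$ making $\CF_{\Gamma_n}$ unitary has total mass $1$, whereas the restriction of $dt$ to $\varpi^n\CO_\k$ has mass $q^{-n}$. The two measures therefore differ by the factor $q^n$, which yields
\[
\int_{\varpi^n\CO_\k}\bigl|\CF_{\Gamma_n}F(a,\cdot)(c)\bigr|^2 dc \;=\; q^{-n}\sum_{[t]\in\Gamma_n}|F(a,[t])|^2.
\]
Combining these two identities gives $\|\bO_\theta(F)\|_{\CL^2}^2 = q^n\,\|F\|_{L^2(X_n)}^2$, i.e. $q^{-n/2}\bO_\theta$ is an isometry on $\CD(X_n)$. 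Since $\CD(X_n)$ is dense in $L^2(X_n)$, the map extends uniquely to an isometry $L^2(X_n)\to\CL^2(L^2(U_n))$; one should verify that this extension coincides with the distributional extension of $\bO_\theta$ from \pref{extend} restricted to $L^2(X_n)$, which is immediate from the definitions.

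For surjectivity, being an isometry the range is closed in $\CL^2(L^2(U_n))$, so it suffices to prove density. By \lref{lemwigner1}, for every $\vf_1,\vf_2\in L^2(U_n)$ the Wigner function $W_{\vf_1,\vf_2}^\theta$ lies in $L^2(X_n)$ and satisfies $\bO_\theta(W_{\vf_1,\vf_2}^\theta)=|\vf_2\rangle\langle\vf_1|$; hence every rank-one operator on $L^2(U_n)$ lies in the range. Since finite-rank operators are dense in $\CL^2(L^2(U_n))$, the range is dense, hence all of $\CL^2(L^2(U_n))$. The main conceptual obstacle, and the only place where the hypothesis $\theta\in\CO_\k^\times$ is used in an essential way (as opposed to $\theta\in\CO_\k\setminus\{0\}$), is ensuring that the second change of variables maps $U_n$ \emph{onto} $\varpi^n\CO_\k$: this requires $|\theta|_\k=1$, otherwise we would only obtain an isometric embedding.
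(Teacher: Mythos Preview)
Your proof is correct and follows essentially the same approach as the paper: compute the Hilbert--Schmidt norm via the explicit kernel formula \eqref{KER}, reduce by changes of variables (using $|\sigma'|_\k=|\phi'|_\k=1$ and $|\theta|_\k=1$) to an integral of $|\Id\otimes\CF_{\Gamma_n}F|^2$ over $U_n\times\varpi^n\CO_\k$, apply Plancherel for $\Gamma_n$, and then deduce surjectivity from \lref{lemwigner1} and the density of finite-rank operators. The only cosmetic difference is that you package the change of variables as a single bijection $(u,u_0)\mapsto(u^{1/2}u_0^{1/2},u^{1/2}u_0^{-1/2})$, whereas the paper performs it as a short chain of dilations, inversions and square-root substitutions; your remark on the Plancherel normalization and on the role of $|\theta|_\k=1$ is accurate and matches the paper's own observation following the proof.
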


\begin{proof}
We first show that, $q^{-n/2}{\bf \Omega}_{\theta}$ is  an isometry and for that issue, we compute the 
Hilbert-Schmidt norm of $ {\bf \Omega}_{\theta}(f)$, $f\in L^2(X_n)$, as the $L^2$-norm of its kernel
$\big[{\bf \Omega}_{\theta}(f)\big] $.
Using the conclusion of Lemma \ref{propOm} together with the invariance of the Haar measure of $U_n$
by dilations in $\CO_\k^\times$ and group inversion,  with the substitution formulas \eqref{change} and with the Plancherel
formula for $\Gamma_n$,
we get since $\theta\in\CO_\k^\times$:
\begin{align*}
\int_{U_n^2}    \big| \big[{\bf \Omega}_{\theta}(f)\big] (u_0,u)    \big|^2  \, du du_0 &=  
q^{2n} \, \int_{U_n\times U_n}   \big |         \big(\Id\otimes \CF_{\Gamma_n} f\big)
\big(u^{1/2}u_0^{1/2},\theta \phi( u^{1/2}u_0^{-1/2} ) \big)    \big |^2  \, du  du_0 \\
&=  
q^{2n} \, \int_{U_n\times U_n}   \big |         \big(\Id\otimes \CF_{\Gamma_n} f\big)
\big(u^{1/2},\theta \phi( u^{1/2}u_0^{-1} ) \big)    \big |^2  \, du  du_0 \\
&=  
q^{2n} \, \int_{U_n\times U_n}   \big |         \big(\Id\otimes \CF_{\Gamma_n} f\big)
\big(u,\theta \phi( uu_0^{-1} ) \big)    \big |^2  \, du  du_0 \\
&=  
q^{2n} \, \int_{U_n\times U_n}   \big |         \big(\Id\otimes \CF_{\Gamma_n} f\big)
\big(u,\theta \phi( u_0 ) \big)    \big |^2  \, du  du_0 \\
&=  
q^{2n} \, \int_{U_n\times \varpi^n\CO_\k}   \big |         \big(\Id\otimes \CF_{\Gamma_n} f\big)
\big(u, z_0  \big)    \big |^2  \, du  dz_0 \\
&=  q^n
\sum_{[t]\in\Gamma_n}\int_{U_n}   \big |   f
(u, [t])   \big |^2  \, du  =q^n\int_{X_n} \big |f([g]) \big |^2\,d[g].
 \end{align*}

Consider last the linear operator which assigns to a symbol $f$ the kernel of the operator  ${\bf \Omega}_{\theta}(f)$:
$$
L^2(X_n)\to L^2(U_n\times U_n),\quad
f\mapsto \Big[(u_0,u)\mapsto   \big(\Id\otimes \CF_{\Gamma_n} f\big)
\big(u^{1/2}u_0^{1/2},\theta \phi( u^{1/2}u_0^{-1/2} ) \big)  \Big].
$$
This operator is unitary  since  the map 
$$
U_n\times U_n\to U_n\times U_n,\quad (u_0,u)\mapsto \big(u^{1/2}u_0^{1/2},\theta \phi( u^{1/2}u_0^{-1/2} ) \big),
$$
is clearly an homeomorphism of class $C^1$ with Jacobian $1$. Therefore, 
the quantization map is also surjective. 
 \end{proof}
 
 \begin{rmks}
 \label{rm}
 {\rm  {\it (i)} The adjoint $\bO^*_\theta :\CL^2(L^2(U_n))\to L^2(X_n)$ of the quantization map (that is, the symbol map) is
  given for a trace-class operator $A\in \CL^1(L^2(U_n))$ by the function $\big[[g]\mapsto q^n{\rm Tr}\big(A\Omega_\theta([g])\big)\big]\in L^2(X_n)\cap L^\infty(X_n)$.
  Indeed, since  $\|\Omega_\theta( [g])\|=1$, we can use Fubini's Theorem (for the von Neumann algebra $\mathcal B(L^2(U_n))\bar\otimes L^\infty(X_n)$ with semi-finite trace ${\rm Tr}\otimes\int_{X_n}$) to get   for $f\in L^1(X_n)\cap L^2(X_n)$ and $A\in \CL^1(L^2(U_n))$:
  $$
  \langle {\bf \Omega}_{\theta}^*(A),f\rangle ={\rm Tr}\big(A^*{\bf \Omega}_{\theta}(f)\big)=
  q^n{\rm Tr}\Big(A^*\int_{X_n}\Omega_{\theta}([g])\,f([g])d[g]\Big)
  = q^n\int_{X_n} \overline{{\rm Tr}\big(A\Omega_{\theta}([g])\big)}\,f([g])d[g].
   $$
{\it (ii)} Unitarity of the quantization map also implies that ${\bf \Omega}_{\theta}(W_{\varphi_{1},  \varphi_{2}}^{\theta}) = | \varphi_{2 }\rangle \langle \varphi_1|$.
Indeed, using Dirac's ket-bra notation
for rank one operators:
$$ 
| \varphi_{2 }\rangle \langle \varphi_{1} | (\vf )  :=\langle\vf_1,\vf\rangle\, \varphi_2,
\quad \vf,\vf_1,\vf_2\in L^2(U_n),
$$
we get
$$
W_{\varphi_{1},  \varphi_{2}}^{\theta}([g])=\langle\vf_1,\Omega_\theta([g])\vf_2\rangle ={\rm Tr}\big(  | \varphi_{2 }\rangle \langle \varphi_1| \,\Omega_\theta([g])\big)
=q^{-n} \bO^*_\theta\big( | \varphi_{2 }\rangle \langle \varphi_1| \big)
=\bO^{-1}_\theta\big( | \varphi_{2 }\rangle \langle \varphi_1| \big).
$$
}
 \end{rmks}
 
 \subsection{The composition law of symbols in $\CD(X_n)$}
 \label{CLSL2}

 We shall now give  the formula for the composition  law of symbols   in the $p$-adic Fuchs calculus
 and prove the basic properties of this non-formal $\star$-product. The latter is 
   defined by transporting the product of the Hilbert-Schmidt operators $\CL^2(L^2(U_n))$
  to $L^2(X_n)$ via the unitary operator $q^{-n/2}\bO_\theta$:
 \begin{dfn}
 Let $\theta\in\CO_\k^\times$. We let $\star_\theta$ be the associative (continuous) product on 
 $L^2(X_n)$ given by:
 $$
 f_1\star_\theta f_2:=\bO_\theta^{-1}\big(\bO_\theta(f_1)\,\bO_\theta(f_2)\big).
 $$
 \end{dfn}
 
 \begin{rmk}
 {\rm
 Since $\bO_\theta(f)^*=\bO_\theta(\overline f)$, we see that the complex conjugation is an    
 involution of the Hilbert algebra $(L^2(X_n),\star_\theta)$.}
 \end{rmk}
 It is easy to give an explicit expression for the deformed product $\star_\theta$ between test functions:
 \begin{prop}
 \label{IF}
 Let $\theta\in\CO_\k^\times$. For $f_1,f_2\in\CD(X_n)$, we have
 $$
 f_1\star_\theta f_2([g])=\int_{X_n\times X_n} K_{\theta}^3\big([g],[g_1],[g_2]\big)\,
 f_1([g_1])\,f_2([g_2])\,d[g_1]\,d[g_2],
 $$
 where $K_\theta^3$ is the locally constant on $X_n^3$ given by:
 $$
 K_{\theta}^3 \big ( (u_1,[t_1]),(u_2,[t_2]),(u_3,[t_3]) \big ) := q^{2 n } \, \Psi_{\theta}\big(
  \phi(u_1 u_2^{-1})t_3+ \phi( u_2 u_3^{-1})t_1+\phi( u_3 u_1^{-1})t_2\big).
 $$
 \end{prop}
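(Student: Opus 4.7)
The plan is to reduce the $\star_\theta$-product to a trace via \lref{symbol} and to evaluate that trace by unfolding the Schwartz kernel of \lref{propOm}. Since $f_1,f_2\in\CD(X_n)\subset L^2(X_n)$, \pref{Omegaunitar} gives $\bO_\theta(f_j)\in\CL^2(L^2(U_n))$, so $\bO_\theta(f_1)\bO_\theta(f_2)\,\Omega_\theta([g])\in\CL^1(L^2(U_n))$ for every $[g]$. Applying \lref{symbol} to $A:=\bO_\theta(f_1)\bO_\theta(f_2)$ then yields
\[
f_1\star_\theta f_2([g])=\Tr\bigl(\bO_\theta(f_1)\,\bO_\theta(f_2)\,\Omega_\theta([g])\bigr).
\]

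To compute this trace I would assemble the Schwartz kernel of the composition and integrate on the diagonal. By \lref{propOm} and $|\theta|_\k=1$, the kernel of $\bO_\theta(f_j)$ is $K_j(u_0,u)=q^n(\Id\otimes\CF_{\Gamma_n}f_j)(u^{1/2}u_0^{1/2},\theta\phi(u^{1/2}u_0^{-1/2}))$, so that of $\bO_\theta(f_1)\bO_\theta(f_2)$ is $\int K_1(u_0,v)K_2(v,u)\,dv$. Composition with the action $\Omega_\theta(u,[t])\vf(u')=\Psi_\theta(\phi(uu'^{-1})t)\vf(u^2u'^{-1})$ is handled by the substitution $u_0':=u^2u'^{-1}$, which has $\k$-Jacobian of modulus $1$ and converts $\phi(uu'^{-1})$ into $\phi(u_0'u^{-1})$. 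Restricting to the diagonal then produces
\[
f_1\star_\theta f_2([g])=q^{2n}\!\!\int_{U_n\times U_n}\!\!K_1(u_0,v)\,K_2(v,u^2u_0^{-1})\,\Psi_\theta(\phi(u_0u^{-1})t)\,dv\,du_0,
\]
where the fractional powers in $K_2(v,u^2u_0^{-1})$ simplify via $(u^2u_0^{-1})^{1/2}=u\,u_0^{-1/2}$.

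The final step is the change of variables $u_1:=v^{1/2}u_0^{1/2}$, $u_2:=u\,u_0^{-1/2}v^{1/2}$. By \pref{sigma} this is a $C^1$-diffeomorphism of $U_n\times U_n$ onto itself; a short computation gives the Jacobian $2\,u\,u_0^{-1/2}v^{1/2}$, whose $\k$-modulus equals $|2|_\k=1$ by our standing hypothesis on $\k$. The inverse reads $u_0=u\,u_1u_2^{-1}$, $v=u^{-1}u_1u_2$, and repeated use of $\phi(x^{-1})=-\phi(x)$ together with commutativity of $U_n$ yields $v^{1/2}u_0^{-1/2}=u_2u^{-1}$, $u\,u_0^{-1/2}v^{-1/2}=u\,u_1^{-1}$, and $u_0u^{-1}=u_1u_2^{-1}$. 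Unfolding $\Id\otimes\CF_{\Gamma_n}$ as sums over $[t_j]\in\Gamma_n$ absorbs the factors of $\theta$ into $\Psi_\theta$, and the three characters assemble into $\Psi_\theta\bigl(\phi(u_1u_2^{-1})t+\phi(u_2u^{-1})t_1+\phi(u\,u_1^{-1})t_2\bigr)$, which together with the prefactor $q^{2n}$ is exactly $K_\theta^3([g],[g_1],[g_2])$ after relabelling $u=u_3$, $t=t_3$. Local constancy of $K_\theta^3$ is immediate, since $\Gamma_n$ is discrete, $\phi$ is continuous, and $\Psi_\theta$ is trivial on $\CO_\k$ thanks to $\theta\in\CO_\k^\times$.

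The main obstacle is the combinatorial bookkeeping: tracking the three $\phi$-arguments through the kernel composition and the change of variables, and verifying that the Jacobian is a $\k$-unit—which is precisely where our hypothesis $2\in\CO_\k^\times$ enters.
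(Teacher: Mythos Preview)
Your argument follows the paper's proof essentially line by line: reduce to a trace via \lref{symbol}, write the kernel of $\bO_\theta(f_1)\bO_\theta(f_2)\Omega_\theta([g])$ using \lref{propOm}, integrate on the diagonal, and then perform the change of variables $(u_0,v)\mapsto(u_1,u_2)=(v^{1/2}u_0^{1/2},\,u\,u_0^{-1/2}v^{1/2})$, which the paper simply calls ``some usual change of variables''. Two small bookkeeping slips to fix: in your displayed diagonal formula the extra prefactor $q^{2n}$ should be dropped, since each $K_j$ already carries a factor $q^n$; and the Jacobian of your change of variables is $\tfrac12\,u\,u_0^{-1}$ rather than $2u\,u_0^{-1/2}v^{1/2}$ --- but either expression has $\k$-modulus $1$ because $2\in\CO_\k^\times$, so the conclusion is unaffected.
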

 \begin{proof}
 For $f_1,f_2\in \CD(X_n)$, $\bO_\theta(f_j)$ is Hilbert-Schmidt and thus
 $\bO_\theta(f_1)\,\bO_\theta(f_2)$ is trace class.
Hence, Remark \ref{rm} (i)
 entails that:
 $$
 f_1\star_\theta f_2([g])=
 {\rm Tr}\big(\bO_\theta(f_1)\,\bO_\theta(f_2)\,\Omega_\theta([g])\big).
 $$
 Set 
$A_1:= \bO_\theta(f_1)$ and $A_2:=\bO_\theta(f_2)\,\Omega_\theta([g])$. Being Hilbert-Schmidt, the kernels  of $A_1$ and $A_2$
(denoted by $[A_1], [A_ 2]$ in what follows) belong to $L^2(U_n\times U_n)$.
Polarising the equality between the Hilbert-Schmidt norm of an operator and the $L^2$-norm   of its kernel, we deduce 
$$
{\rm Tr}(A_1A_2)=\int_{U_n\times U_n} [A_1](u_1,u_2)\, [A_ 2](u_2,u_1)\,du_1 du_2.
$$  
By Lemma  \eqref{propOm}
$$
[A_1](u_1,u_2)= q^n \,  \big(\Id \otimes \CF_{\Gamma_n} f_1\big)
 \Big(u_2^{1/2}u_1^{1/2},
 \theta \phi\big( u_2^{1/2}u_1^{-1/2} \big)\Big).
$$
By  Lemma \ref{omegexp}, we easily deduce that
$$
\big[\Omega_\theta([g])\big](u_3,u_1) = \Psi_{\theta}\big( \phi(uu_3^{-1}) t\big)    \delta_{u^2 u_3^{-1}}(u_1)=
\Psi_{\theta}\big( \phi(u_1u^{-1}) t\big)    \delta_{u^2 u_1^{-1}}(u_3),
$$
and thus
\begin{align*}
[A_ 2](u_2,u_1)&=\int_{U_n}\big[\bO_\theta(f_2)\big](u_2,u_3)\,\big[\Omega_\theta([g])\big](u_3,u_1)\,du_3\\
&=\big[\bO_\theta(f_2)\big](u_2,u^2 u_1^{-1})\,\Psi_{\theta}\big( \phi(u_1u^{-1}) t\big)\\
&=q^n\, \big(\Id \otimes \CF_{\Gamma_n} f_2\big) \Big(uu_1^{-1/2}u_2^{1/2},\theta \phi\big(uu_1^{-1/2}u_2^{-1/2} \big)\Big)\,
 \Psi_{\theta}\big( \phi(u_1u^{-1}) t\big).
\end{align*}
 Hence, we get
\begin{align*}
{\rm Tr}(A_1A_2)&=q^{2n}\int_{U_n\times U_n}\big(\Id \otimes \CF_{\Gamma_n} f_1\big)\Big(u_2^{1/2}u_1^{1/2}, \theta \phi\big( u_2^{1/2}u_1^{-1/2} \big)\Big)\\
&\qquad\qquad\qquad\times  \big(\Id \otimes \CF_{\Gamma_n} f_2\big) \Big(uu_1^{-1/2}u_2^{1/2},\theta \phi\big(uu_1^{-1/2}u_2^{-1/2} \big)\Big)\,
 \Psi_{\theta}\big( \phi(u_1u^{-1}) t\big)\,du_1du_2\\
 &
 \hspace{-1cm}=q^{2n}\int_{U_n\times U_n}\big(\Id \otimes \CF_{\Gamma_n} f_1\big)\big(u_1, \theta \phi\big( u_2u^{-1} \big)\big)
   \big(\Id \otimes \CF_{\Gamma_n} f_2\big) \big(u_2,\theta \phi\big(uu_1^{-1} \big)\big)\,
 \Psi_{\theta}\big( \phi(u_1u_2^{-1}) t\big)\,du_1du_2.
\end{align*}
Undoing the Fourier transforms, we arrive at the announced formula. 
  \end{proof}
 
 By construction, the quantization map $\bO_\theta$ intertwines the geometric action of $G_n$ 
 on $L^2(X_n)$ with the action of $G_n$ on $\CL^2(L^2(U_n))$ given by conjugation of the
 representation $\pi_{\theta}$. This implies that the product $\star_\theta$ on $L^2(X_n)$
 is covariant for the action of $G_n$ or equivalently, that the three-point functions $ K_\theta^3$
 in invariant under the diagonal action of $G_n$ (a property that can easily be  checked directly).
 Since $\theta$ is now assumed to be in $\CO_\k^\times$, the representation $\widetilde \pi_{\theta}$ of 
 the quotient group $X_n$ is no longer defined--see the discussion right after equation \eqref{Utheta}.
  Hence, it is not automatic that the product
 $\star_\theta$ is covariant for the left action of $X_n$ on $L^2(X_n)$ (a crucial property 
for deformation theory for actions of $X_n$ on $C^*$-algebras \cite{GJ3}).
In fact, the invariance of the function
$$
G_n\ni (u,t)\mapsto \Psi_\theta\big(\phi(u)t\big),
$$
under translations in $H_n=\{1\}\ltimes\varpi^{-n}\CO_\k$ 
immediately implies that  $ K_\theta^3$ is also invariant under
the diagonal left action of $X_n$. Hence, denoting by $\lambda$ the left regular action and by
$\rho$ the right regular action of $X_n$ on $L^2(X_n)$,  we get:
 
 \begin{prop}
 Let $\theta\in\CO_\k^\times$.
 The product $\star_\theta$ is covariant for the left action of $X_n$, namely
 $$
 \lambda_{[g]}\big(f_1\star_\theta f_2\big)= \lambda_{[g]}(f_1)\star_\theta \lambda_{[g]}(f_2),
 \quad \forall f_1,f_2\in L^2(X_n)\,,\;\forall [g]\in X_n.
 $$
 In particular, when $f_1,f_2\in\CD(X_n)$, we have
 $$
 f_1\star_\theta f_2=\int_{X_n\times X_n} K_{\theta}\big([g_1],[g_2]\big)\,
\rho_{[g_1]}( f_1)\,\rho_{[g_2]}(f_2)\,d[g_1]\,d[g_2],
 $$
where 
$$
K_{\theta}\big([g_1],[g_2]\big):=K^3_{\theta}\big([e],[g_1],[g_2]\big)
=q^{2n} \, \Psi_{\theta}\big(
  \phi( u_2 )t_1-\phi(u_1)t_2\big).
  $$
 \end{prop}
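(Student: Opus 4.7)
The plan is to reduce both claims to the diagonal left $X_n$-invariance of the three-point kernel $K^3_\theta$, and then use a change of variables in the integral representation of $f_1\star_\theta f_2$ from the previous proposition. The invariance I would verify by lifting the computation to $G_n^3$: a diagonal left translation by $(u_0,t_0)\in G_n$ sends $(u_k,t_k)\mapsto(u_0u_k,u_k^{-1}t_0+t_k)$, so every quotient $u_iu_j^{-1}$ is unchanged and the exponent of $K^3_\theta$ picks up only an additive term $t_0\cdot C$ with
\[
C \;=\; \phi(u_1u_2^{-1})u_3^{-1}+\phi(u_2u_3^{-1})u_1^{-1}+\phi(u_3u_1^{-1})u_2^{-1}.
\]
Expanding $\phi(u)=u-u^{-1}$ and using commutativity of $\k^\times$ groups the six resulting monomials into three cancelling pairs, so $C=0$. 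Combined with the descent from $G_n^3$ to $X_n^3$ (which follows from the invariance of $(u,t)\mapsto\Psi_\theta(\phi(u)t)$ under shifts of $t$ in $\varpi^{-n}\CO_\k$, using $\phi(U_n)\subset\varpi^n\CO_\k$ and $\theta\in\CO_\k^\times$), this yields the diagonal left $X_n$-invariance of $K^3_\theta$.

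With invariance in hand, the covariance statement is routine. For $f_1,f_2\in\CD(X_n)$ and $[g_0]\in X_n$, starting from the previous proposition
\[
(f_1\star_\theta f_2)([g_0^{-1}g])=\int_{X_n^2}K^3_\theta([g_0^{-1}g],[g_1],[g_2])\,f_1([g_1])\,f_2([g_2])\,d[g_1]d[g_2],
\]
the substitution $[g_i]\mapsto[g_0^{-1}g_i]$ (by unimodularity of $X_n$) together with the diagonal invariance of $K^3_\theta$ yields $\lambda_{[g_0]}(f_1\star_\theta f_2)=\lambda_{[g_0]}f_1\star_\theta\lambda_{[g_0]}f_2$ on $\CD(X_n)$. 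Extension to all of $L^2(X_n)$ is immediate: the isometry $\bO_\theta$ together with the bound $\|AB\|_{\CL^2}\le\|AB\|_{\CL^1}\le\|A\|_{\CL^2}\|B\|_{\CL^2}$ makes $\star_\theta$ continuous $L^2\times L^2\to L^2$, while $\lambda_{[g_0]}$ is unitary on $L^2(X_n)$.

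Finally, applying diagonal invariance with $[g_0]=[g]^{-1}$ gives $K^3_\theta([g],[g_1],[g_2])=K^3_\theta([e],[g^{-1}g_1],[g^{-1}g_2])=K_\theta([g^{-1}g_1],[g^{-1}g_2])$, and substituting into $(f_1\star_\theta f_2)([g])$ followed by the change of variables $[g_i]\mapsto[g][g_i]$ produces the claimed formula in terms of $\rho_{[g_i]}f_i$. The only mildly delicate step is verifying the cancellation $C=0$; the remaining manipulations are routine bookkeeping with changes of variables and the unimodularity of $X_n$.
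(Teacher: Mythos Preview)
Your argument is correct and follows the same route as the paper, which (in the paragraph preceding the proposition rather than in a formal proof) deduces the diagonal $X_n$-invariance of $K^3_\theta$ from the diagonal $G_n$-invariance already established via $\bO_\theta$, together with the $H_n$-invariance of $(u,t)\mapsto\Psi_\theta(\phi(u)t)$. Your explicit verification that $C=0$ is precisely the ``direct check'' the paper alludes to but does not write out, and the change-of-variable derivations of covariance and of the two-point formula are the routine steps the paper leaves implicit.
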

 
As in Lemma \ref{wignerbruhat},  we have to remember here
 that for Abelian $p$-adic groups, Bruhat's notions of smooth compactly supported function and of Schwartz function are identical.

 \begin{prop}
 \label{classe}
  Let $\theta\in\CO_\k^\times$.
 The product $\star_\theta$ is  continuous from $\CD(X_n)\times\CD(X_n)$ to
 $\CD(X_n)$.
 \end{prop}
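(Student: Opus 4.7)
The plan is to factor the $\star$-product through the unitary quantization map $\bO_\theta$ of Proposition \ref{Omegaunitar}. Writing $\CK$ for the subspace of $\CL^2(L^2(U_n))$ consisting of operators whose Schwartz kernel lies in $\CD(U_n\times U_n)$, I will check three continuity statements: $\bO_\theta:\CD(X_n)\to\CK$ is continuous; kernel composition $\CK\times\CK\to\CK$ is continuous; and $\bO_\theta^{-1}:\CK\to\CD(X_n)$ is continuous. Chaining these gives the conclusion, since $\bO_\theta(f_1\star_\theta f_2) = \bO_\theta(f_1)\bO_\theta(f_2)$.

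For the first point, apply Lemma \ref{propOm}. If $f\in\CD(X_n)$ is invariant under $U_m$-dilations in the variable $u$ and supported on a finite set $S\subset\Gamma_n$ in $[t]$, then $\Id\otimes\CF_{\Gamma_n}f\in\CD(U_n\times\varpi^n\CO_\k)$, with invariance domain in the second variable controlled by $S$ and support in the first variable controlled by $m$. The pullback under $(u_0,u)\mapsto(u^{1/2}u_0^{1/2},\,\theta\phi(u^{1/2}u_0^{-1/2}))$ preserves local constancy: using $(a-1)(a+1)=a^2-1$ and $2\in\CO_\k^\times$, one checks that for $c\in U_{m''}$ the square root satisfies $(uc)^{1/2}\in u^{1/2}U_{m''}$, so that the filtration $\{U_k\}$ is respected by $\sigma^{-1}$; combined with the isometric property of $\phi$ from Proposition \ref{sigma}, this places $[\bO_\theta(f)]$ in $\CD(U_n\times U_n)$ with all parameters controlled by $m$ and $S$.

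The second point is straightforward because $U_n$ is compact: if $K_1,K_2\in\CD(U_n\times U_n)$ are both invariant under $U_{m'}$-dilations in each argument, the same holds for the composition, and $\|K_1\circ K_2\|_\infty\le q^{-n}\|K_1\|_\infty\|K_2\|_\infty$. For the third point, $A:=\bO_\theta(f_1)\bO_\theta(f_2)$ is trace class (a product of Hilbert--Schmidt operators), so Lemma \ref{symbol} gives $\bO_\theta^{-1}(A)([g])=\Tr(A\Omega_\theta([g]))$. Inserting the kernel of $\Omega_\theta([g])$ from Lemma \ref{omegexp} and then applying the substitution $v=\phi^{-1}(y)$, justified by Proposition \ref{sigma} and the change-of-variables formula \eqref{change}, yields
$$
\bO_\theta^{-1}(A)(u,[t]) \,=\, \int_{\varpi^n\CO_\k} [A]\bigl(u\phi^{-1}(y),\,u\phi^{-1}(y)^{-1}\bigr)\,\Psi(\theta y t)\,dy,
$$
which is the Fourier transform in $t$ of a function on $\k$ supported on $\varpi^n\CO_\k$ and locally constant in $y$. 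Hence, as a function of $[t]\in\Gamma_n$ it has finite support (controlled by the invariance domain of $[A]$), and as a function of $u$ it is locally constant (controlled by the invariance of $[A]$ in its first argument); so $\bO_\theta^{-1}(A)\in\CD(X_n)$.

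The main obstacle is the bookkeeping required at every step: one must verify that the invariance domain in $u$ and the support in $[t]\in\Gamma_n$ of the output are controlled quantitatively by the corresponding data for $f_1$ and $f_2$, since this is precisely what sequential convergence in the LF-topology of $\CD(X_n)$ demands. Once the bounds are traced through each of the three maps, continuity of $\star_\theta:\CD(X_n)\times\CD(X_n)\to\CD(X_n)$ follows.
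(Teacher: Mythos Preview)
Your approach is correct and genuinely different from the paper's. The paper works directly with the three-point kernel formula for $f_1\star_\theta f_2(u,[t])$: it first observes local constancy in $u$ from that of $\Psi_\theta$, then performs the sums over $[t_1],[t_2]$ and a substitution $u_1\mapsto \phi^{-1}(\cdot)u_2$ to recognise $f_1\star_\theta f_2(u,[t])$ as $q^{2n}\CF_\k(g_u)(\theta t)$ for an explicit $g_u\in\CD(\k)$, whence finite support in $[t]$; continuity then follows from a sup-norm estimate together with the observation that the invariance/support data of the output depend only on those of $f_1,f_2$.

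Your route instead factors through the quantization: you show that $\bO_\theta$ (and its inverse via the trace formula of Lemma~\ref{symbol}) restricts to a topological isomorphism between $\CD(X_n)$ and the algebra $\CK$ of operators with kernel in $\CD(U_n\times U_n)$, and that kernel composition is continuous on $\CK$. This is more structural and yields a reusable statement (the identification $\CD(X_n)\simeq\CK$), while the paper's computation is more direct and avoids verifying three separate continuity claims. One small slip: with the paper's normalisation, $q^{-n/2}\bO_\theta$ is unitary, so $\bO_\theta^*=q^n\bO_\theta^{-1}$ and the definition $f_1\star_\theta f_2=\bO_\theta^*(\bO_\theta(f_1)\bO_\theta(f_2))$ gives $\bO_\theta(f_1\star_\theta f_2)=q^n\,\bO_\theta(f_1)\bO_\theta(f_2)$ rather than equality; this harmless constant does not affect your argument. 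The bookkeeping you flag (that the filtration $\{U_k\}$ is preserved by $u\mapsto u^{1/2}$, and that the invariance/support parameters propagate through each of the three maps) is indeed the substance of the proof and is correctly identified.
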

\begin{proof}
For $f_1,f_2\in\CD(X_n)$, we have
 $$
 f_1\star_\theta f_2(u,[t])=q^{2 n }\int_{X_n\times X_n} \Psi_{\theta}\big(
  \phi(u u_1^{-1})t_2+ \phi( u_1 u_2^{-1})t+\phi( u_2 u^{-1})t_1\big)\,
 f_1([g_1])\,f_2([g_2])\,d[g_1]\,d[g_2].
 $$
 Since $\Psi_\theta$ is locally constant, we deduce from the continuity of $\phi$, from the fact that $u_1, u_2$
 are units and from the fact that the sum over $[t_1]$ and $[t_2]$ is actually finite, that $ f_1\star_\theta f_2$
 is locally constant in the variable $u\in U_n$. To show that $ f_1\star_\theta f_2\in\CD(X_n)$, we therefore need to show that $ f_1\star_\theta f_2$  is finitely supported in the variable
 $[t]\in\Gamma_n$. For that, we perform the sums over $[t_1],[t_2]\in\Gamma_n$ to get
  $$
 f_1\star_\theta f_2(u,[t])=q^{2 n }\!\!\int_{U_n\times U_n}\!\!\!\!\!\! \Psi_{\theta}\big(
  \phi( u_1 u_2^{-1})t\big)\,
 \big(\Id\otimes \CF_{\Gamma_n} f_1\big)(u_1,\phi( u_2 u^{-1}))\, \big(\Id\otimes \CF_{\Gamma_n} f_2\big)(u_2,\phi(u u_1^{-1}))\,du_1\,du_2.
 $$
 Using now the translation $u_1\mapsto u_1u_2$ and the second substitution formula \eqref{change}, we get
  \begin{align*}
 &f_1\star_\theta f_2(u,[t])=q^{2 n }\int_{\varpi^n\CO_\k} \Psi_{\theta}(
   u_1 t)\\
  & \qquad\times\Big(\int_{U_n}
 \big(\Id\otimes \CF_{\Gamma_n} f_1\big)\big(\phi^{-1}(u_1)u_2,\phi( u_2 u^{-1})\big)\, \big(\Id\otimes \CF_{\Gamma_n} f_2\big)\big(u_2,\phi(u u_2^{-1}
 \phi^{-1}(u_1)^{-1})\big)\,du_2\Big)\,du_1.
 \end{align*}
 Hence, 
 $ f_1\star_\theta f_2(u,[t])=q^{2 n }\CF_\k(g_u)(\theta t)$
 where
 $$
 g_u(t)=\mathds{1}_{\varpi^n\CO_\k}(t)\int_{U_n}
 \big(\Id\otimes \CF_{\Gamma_n} f_1\big)\big(\phi^{-1}(t)u_2,\phi( u_2 u^{-1})\big)\, \big(\Id\otimes \CF_{\Gamma_n} f_2\big)\big(u_2,\phi(u u_2^{-1}
 \phi^{-1}(t)^{-1})\big)\,du_2.
 $$
 Since $f_1$ is locally constant in the variable $u\in U_n$, we deduce that  $\Id\otimes \CF_{\Gamma_n} f_1$ is locally constant in its first variable.
 Similarly, $f_2$ is finitely supported in the variable $[t]\in\Gamma_n$ so that $\Id\otimes \CF_{\Gamma_n} f_2$ is locally constant in its 
 second variable. From this we deduce that $g_u$ is locally constant, so that $\CF_\k(g_u)$ is compactly supported. Hence, 
 $ f_1\star_\theta f_2$ is finitely supported in the variable $[t]\in\Gamma_n$. Continuity follows from the obvious estimate
 $$
 \|f_1\star_\theta f_2\|_\infty\leq q^{2 n }\, {\rm Vol}({\rm Supp}(f_1))\, {\rm Vol}({\rm Supp}(f_2))\,\|f_1\|_\infty\,\|f_2\|_\infty,
 $$
 and from the fact that 
 the invariance domain and the support of
 $ f_1\star_\theta f_2$ both depend only on  the invariance domains and the supports of $f_1,f_2$ but not on $f_1$ and $f_2$ themselves.  
  \end{proof}
  
  The next property  is often named \emph{strong traciality}:
 
 \begin{prop}
  Let $\theta\in\CO_\k^\times$.
 For $f_1,f_2\in\CD(X_n)$, we have
 $$
 \int_{X_n}  f_1\star_\theta f_2([g])\,d[g]= \int_{X_n}  f_1([g])\,f_2([g])\,d[g].
 $$
 \end{prop}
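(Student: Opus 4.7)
The plan is to interpret both sides as (normalized) traces of the same operator on $L^2(U_n)$. The key auxiliary identity I would establish first is the trace formula
$$
\Tr(\bO_\theta(h)) = q^n \int_{X_n} h([g])\,d[g], \qquad h\in\CD(X_n).
$$
This follows directly from Lemma \ref{propOm}: for $h\in\CD(X_n)$ the kernel is in $\CD(U_n\times U_n)$, hence $\bO_\theta(h)$ is trace class and its trace equals the integral of the kernel on the diagonal. Evaluating at $u=u_0$ gives $\phi(u^{1/2}u_0^{-1/2})=\phi(1)=0$, so
$$
[\bO_\theta(h)](u_0,u_0)=|\theta|_\k q^n (\Id\otimes\CF_{\Gamma_n}h)(u_0,0) = q^n\sum_{[t]\in\Gamma_n}h(u_0,[t]),
$$
using $|\theta|_\k=1$ since $\theta\in\CO_\k^\times$, and that $\CF_{\Gamma_n}$ evaluated at the trivial character is the total sum over $\Gamma_n$. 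Integrating over $u_0\in U_n$ gives the claim.

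Next I would combine the isometry statement of Proposition \ref{Omegaunitar} with the identity $\bO_\theta(f)^*=\bO_\theta(\overline{f})$ (noted right after the definition of $\bO_\theta$) to express the trace of a product of two quantized operators in terms of the pointwise product of symbols:
$$
\Tr\bigl(\bO_\theta(f_1)\bO_\theta(f_2)\bigr) = \langle\bO_\theta(\overline{f_1}),\bO_\theta(f_2)\rangle_{\mathrm{HS}} = q^n\langle\overline{f_1},f_2\rangle_{L^2(X_n)} = q^n\int_{X_n} f_1 f_2\,d[g].
$$

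Finally I would assemble the two formulas. By Proposition \ref{classe} we have $f_1\star_\theta f_2\in\CD(X_n)$, and by the very definition of the $\star$-product (which yields $\bO_\theta(f_1\star_\theta f_2)=\bO_\theta(f_1)\bO_\theta(f_2)$ in the interpretation used throughout the section, as evidenced by the three-point kernel derivation), applying the trace formula above to $h=f_1\star_\theta f_2$ gives
$$
q^n\int_{X_n} f_1\star_\theta f_2\,d[g] = \Tr\bigl(\bO_\theta(f_1\star_\theta f_2)\bigr) = \Tr\bigl(\bO_\theta(f_1)\bO_\theta(f_2)\bigr) = q^n\int_{X_n} f_1 f_2\,d[g],
$$
and dividing by $q^n$ yields the strong traciality identity.

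I do not foresee any serious obstacle: the key point is only the trace formula, whose proof reduces to inspecting the kernel from Lemma \ref{propOm} on the diagonal. A purely symbolic proof via the three-point kernel $K^3_\theta$ is also possible but would require manipulating formally divergent sums of characters over $\Gamma_n$, which is why routing through the Hilbert-Schmidt picture is cleaner.
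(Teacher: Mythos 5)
Your proof is correct and follows essentially the same route as the paper: both express $\int f_1 f_2$ as $q^{-n}\Tr(\bO_\theta(f_1)\bO_\theta(f_2))$ via the isometry of Proposition \ref{Omegaunitar} and the identity $\bO_\theta(f)^*=\bO_\theta(\overline f)$, identify this with $q^{-n}\Tr(\bO_\theta(f_1\star_\theta f_2))$, and then compute the trace via the diagonal of the kernel from Lemma \ref{propOm}. The only cosmetic difference is that you isolate the identity $\Tr(\bO_\theta(h))=q^n\int_{X_n} h$ as a preliminary lemma before assembling, whereas the paper runs the same computations as a single chain of equalities.
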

 \begin{proof}
 Observe first that the left hand side is well defined. Indeed, Proposition \ref{classe} shows that $f_1\star_\theta f_2$ belongs to $\CD(X_n)\subset L^1(X_n)$.

By polarization, we may assume $f_1=\bar f_2$. 
 So, let $f\in\CD(X_n)$.
 By unitarity of the $p$-adic Fuchs calculus (see Proposition \ref{Omegaunitar}), we have since $\CD(X_n)\subset L^2(X_n)$:
 $$
\int_{X_n}  \bar f([g])\,f([g])\,d[g]=\|f\|_2^2=q^{-n}\,\|\bO_\theta( f)\|_2^2.
 $$
From Lemma \ref{propOm}, the operator kernel of $\bO_\theta( \bar f\star_\theta f)$ belongs to $\CD(U_n\times U_n)$.
Hence, its evaluation  on the diagonal  is well defined as an element of $\CD(U_n)$. Moreover, we have the following formula:
 $$
 \big[{\bf \Omega}_{\theta}(\bar f\star_\theta f)\big] (u,u) =   q^n \,  \big(\Id \otimes \CF_{\Gamma_n} ( \bar f\star_\theta f)\big)
(u,0)=   q^n \,\sum_{[t]\in\Gamma_n} \bar f\star_\theta f(u,[t]).
 $$
 Since $\bar f\star_\theta f \in\CD(X_n)\subset L^1(X_n)$, we deduce by Fubini's Theorem that the map $u\mapsto  \big[{\bf \Omega}_{\theta}(\bar  f\star_\theta f)\big] (u,u)$
 belongs to $L^1(U_n)$ with
 $$
 \int_{U_n}\big[{\bf \Omega}_{\theta}(\bar f\star_\theta f)\big] (u,u)\,du=q^n\int_{X_n}\bar f\star_\theta f([g])\,d[g].
 $$
Last, we observe that the product kernel formula gives:
 \begin{align*}
 \int_{U_n}\big[{\bf \Omega}_{\theta}(\bar f\star_\theta f)\big] (u,u)\,du&= \int_{U_n\times U_n}\big[{\bf \Omega}_{\theta}(f)^*\big] (u,u_1)
 \big[{\bf \Omega}_{\theta}( f)\big] (u_1,u)\,dudu_1\\
&= \int_{U_n\times U_n}\overline{\big[{\bf \Omega}_{\theta}(f)\big]} (u_1,u)
 \big[{\bf \Omega}_{\theta}(f)\big] (u_1,u)\,dudu_1
 =\big\|\big[{\bf \Omega}_{\theta}(f)\big]\big\|_2^2=\|\bO_\theta(\overline f)\|_2^2,
 \end{align*}
 which completes the proof.
 \end{proof}
 \section{A Calder\'on-Vaillancourt  type Theorem}
 \label{CV}
\subsection{The symbol space}

The aim of this subsection is to construct a Fr\'echet space of functions on
$X_n$ suitable to extend the Calder\'on-Vaillancourt Theorem  to the $p$-adic Fuchs calculus.

We will make use of the following continuous function on $\k$:
\begin{equation}
\mu_0(t):=\max(1, |\varpi^n t|_{\bf k}),\quad t\in\k .
\end{equation}
It is known (see for instance \cite{Haran}) that $\mu_0^{-1-\eps} \in L^1(\k)$ for every $\eps>0$ and
that $\mu_0$ satisfies a Peetre type inequality:
\begin{align}
\label{Peetre}
 \mu_0(t_1+t_2)\leq\mu_0(t_1)\,\mu_0(t_2),\quad\forall t_1,t_2\in\k.
 \end{align}
Of course,   $\mu_0$ is invariant under  dilations in $\CO_\k^\times$ and under translations in $\varpi^{-n}\CO_\k$. 
 The following properties of the distribution 
 $ \CF_{{\bf k}}\big(\overline{\Psi}\mu_0^s\big)$, $s\in\R$, are important. 

\begin{lem}
\label{corsigmaphi}
Let $s \in\R$. Then, the distribution $ \CF_{{\bf k}}\big(\overline{\Psi}\mu_0^{s}\big) 
 \in\CD'(\k)$ is real valued, has support 
contained in $U_n$ and is invariant under  multiplicative inversion.
\end{lem}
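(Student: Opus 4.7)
The plan is to reduce all three assertions to properties of the simpler distribution $\CF_{\k}(\mu_0^s) \in \CD'(\k)$, via the modulation--translation identity
\begin{equation*}
\CF_{\k}(\overline\Psi\mu_0^s)(t) = \CF_{\k}(\mu_0^s)(t-1).
\end{equation*}
This identity holds for Bruhat test functions by factoring $\overline\Psi(x)\Psi(tx)=\Psi((t-1)x)$, and extends to the distributional setting since $\mu_0^s$ is locally bounded and continuous, hence defines a tempered Bruhat distribution and $\overline\Psi$ acts on it as multiplication by a locally constant function. The only two further facts I need about $\mu_0$ are that it is even (immediate from $|{-}x|_\k=|x|_\k$) and that it is invariant under translations in $\varpi^{-n}\CO_\k$ (recorded in the paper just above the statement); both properties transfer to $\mu_0^s$.

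From there, each of the three claims becomes a short argument. For the support, the translation invariance of $\mu_0^s$ forces $\CF_\k(\mu_0^s)$ to be supported in the annihilator $A(\widehat\k,\varpi^{-n}\CO_\k)$, which under the self-duality of $\k$ relative to $\Psi$ (as recalled in the preliminaries) is $\varpi^n\CO_\k$; the modulation--translation identity then locates $\CF_\k(\overline\Psi\mu_0^s)$ in $1+\varpi^n\CO_\k=U_n$. For the reality, the general identity $\overline{\CF_\k f(t)}=\CF_\k(\overline{f})(-t)$ together with $\overline{\overline\Psi\mu_0^s}=\Psi\mu_0^s$ yields $\overline{\CF_\k(\overline\Psi\mu_0^s)(t)}=\CF_\k(\mu_0^s)(1-t)$, and the evenness of $\CF_\k(\mu_0^s)$ (inherited from that of $\mu_0^s$) rewrites this as $\CF_\k(\mu_0^s)(t-1)=\CF_\k(\overline\Psi\mu_0^s)(t)$. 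For the invariance under multiplicative inversion, for $u\in U_n$ I would perform the substitution $x\mapsto ux$ inside the Fourier integral; since $u\in\CO_\k^\times$ gives unit Jacobian and $\mu_0(ux)=\mu_0(x)$, the substitution produces $\CF_\k(\overline\Psi\mu_0^s)(u^{-1})=\CF_\k(\mu_0^s)(1-u)$, and evenness then gives $\CF_\k(\mu_0^s)(1-u)=\CF_\k(\mu_0^s)(u-1)=\CF_\k(\overline\Psi\mu_0^s)(u)$.

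The main technical issue will be justifying these manipulations distributionally when $s>0$, since $\mu_0^s$ is not integrable. However, Bruhat's framework supports Fourier transform, modulation, translation, and smooth substitution for tempered distributions, and each of the pointwise identities above translates to an equivalent pairing identity against a test function in $\CD(\k)$ that can be verified by direct computation on Schwartz functions and extended by density. So the rigorous proof is essentially bookkeeping once the key pointwise identities listed above have been isolated.
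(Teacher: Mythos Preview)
Your proposal is correct and rests on the same three ingredients the paper uses (evenness of $\mu_0$, its invariance under translations in $\varpi^{-n}\CO_\k$, and its invariance under dilations by units), but you organize the argument around the single identity $\CF_\k(\overline\Psi\mu_0^s)(t)=\CF_\k(\mu_0^s)(t-1)$ together with the evenness of $\CF_\k(\mu_0^s)$. The paper treats reality and support essentially as you do, but handles the inversion invariance by working directly with the pairing $S\CF_\k(\overline\Psi\mu_0^s)[\vf]$: it truncates the integral to $\varpi^{-\ell}\CO_\k$ via dominated convergence, applies Fubini to bring the $U_n$-integral inside, rewrites $\Psi((u^{-1}-1)t)=\Psi(-u^{-1}(u-1)t)$, and then performs the substitution $t\mapsto -ut$ in the frequency variable (using $\mu_0(-ut)=\mu_0(t)$). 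Your substitution $x\mapsto ux$ on the Fourier side achieves the same effect more compactly, at the price of invoking the general distributional modulation, translation and substitution rules rather than the paper's explicit limit-plus-Fubini justification; the paper's version is more self-contained, yours is shorter and conceptually cleaner.
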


\begin{proof}
That $ \CF_{{\bf k}}\big(\overline{\Psi}\mu_0^{s}\big) $ is real, is obvious since 
 $\mu_0$ is real and even and $\Psi$ is an additive character.
That the support of $ \CF_{{\bf k}}\big(\overline{\Psi}\mu_0^{s}\big)$ is contained in $U_n$ 
comes from the invariance of  $\mu_0$  by translations in  $\varpi^{-n}\CO_{\k}$.
In particular,  $ \CF_{{\bf k}}\big(\overline{\Psi}\mu_0^{s}\big)$  can be viewed 
as an element of $\CD'(U_n)$.

Let $S$ be the involutive homeomorphism of 
$\CD(U_n)$ (and of $\CD'(U_n)$) given by $S\vf(u):=\vf(u^{-1})$. 
(Recall that we denote by $\langle T|\vf\rangle$ the evaluation of a 
distribution $T$ on a test function $\vf$.)
For $\vf\in\CD(U_n)$ (that we also view
as an element of $\CD(\k)$ supported on $U_n$), we have
\begin{align*}
\big\langle S\, \CF_{{\bf k}}\big(\overline{\Psi}\mu_0^{s}\big)\big|\vf\big\rangle&=
 \big\langle\CF_{{\bf k}}\big(\overline{\Psi}\mu_0^{s}\big)\big|S\vf\big\rangle=
\big\langle \overline{\Psi}\mu_0^{s}\big|\CF_{{\bf k}}S\vf\big\rangle
=\int_\k \overline{\Psi}(t) \, \mu_0^{s}(t)\,
\big(\CF_{{\bf k}}\, S\vf\big)(t)\,dt,
\end{align*}
and by dominated convergence we get
$$
\big\langle S\, \CF_{{\bf k}}\big(\overline{\Psi}\mu_0^{s}\big)\big|\vf\big\rangle=
\lim_{\ell\to+\infty} \int_{\varpi^{-\ell}\CO_\k} \overline{\Psi}(t) \, \mu_0^{s}(t)\,
\big(\CF_{{\bf k}}\,S\vf\big)(t)\,dt.
$$
Hence we are  left with integrals over compact sets, so that we may use Fubini's Theorem  to write
\begin{align*}
\int_{\varpi^{-\ell}\CO_\k} \overline{\Psi}(t) \, \mu_0^{s}(t)\,
\big(\CF_{{\bf k}}\, S\vf\big)(t)\,dt&=
\int_{\varpi^{-\ell}\CO_\k\times U_n} \overline{\Psi}(t) \, \mu_0^{s}(t)\,
\Psi(ut)\,\vf(u^{-1})\,dtdu\\
&=
\int_{\varpi^{-\ell}\CO_\k\times U_n} \mu_0^{s}(t)\,
\Psi\big((u^{-1}-1)t\big)\,\vf(u)\,dtdu\\
&=
\int_{\varpi^{-\ell}\CO_\k\times U_n} \mu_0^{s}(t)\,
\Psi\big(-u^{-1}(u-1)t\big)\,\vf(u)\,dtdu.
\end{align*}
Since $\mu_0$ is also invariant under  dilations in the group of units, the change of variable
$t\mapsto- ut$ yields
\begin{align*}
\int_{\varpi^{-\ell}\CO_\k} \overline{\Psi}(t) \, \mu_0^{s}(t)\,
\big(\CF_{{\bf k}} S\vf\big)(t)\,dt&=
\int_{\varpi^{-\ell}\CO_\k\times U_n} \mu_0^{s}(t)\,
\Psi\big((u-1)t\big)\,\vf(u)\,dtdu\\
&=\int_{\varpi^{-\ell}\CO_\k} \overline{\Psi}(t) \, \mu_0^{s}(t)\,
\big(\CF_{{\bf k}}\vf\big)(t)\,dt,
\end{align*}
which from Fubini's Theorem  and dominated convergence used backward yields the result.
\end{proof}

\begin{rmk}
{\rm In what follows, we will mostly view  
$ \CF_{{\bf k}}\big(\overline{\Psi}\mu_0^{s}\big)$ as an  element of $\CD'(U_n)$.}
\end{rmk}

We now come to a key technical result.
Denote by  $\ast_{U_n}$ the convolution product on $U_n$. Since $U_n$ is a compact group, $\ast_{U_n}$
extends to an associative bilinear continuous mapping from $\CD'(U_n)\times \CD'(U_n)$ to
$\CD'(U_n)$ (see \cite[section 6]{Bruhat}). The following result shows that  the multiplicative
convolution product
of the distributions $\CF_{{\bf k}}\big(\overline{\Psi}\mu_0^{s}\big)\in\CD'(U_n)$, $s\in\R$,
 behaves (almost) like the additive convolution product!

\begin{lem}
\label{strange}
For $s_1,s_2\in\R$, we have 
$$
\CF_{{\bf k}}\big(\overline{\Psi}\mu_0^{s_1}\big)\ast_{U_n}
\CF_{{\bf k}}\big(\overline{\Psi}\mu_0^{s_2}\big)=
\CF_{{\bf k}}\big(\overline{\Psi}\mu_0^{s_1+s_2}\big).
$$
\end{lem}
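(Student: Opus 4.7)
The plan is to prove the identity via regularization, reducing everything to absolutely convergent integrals. Fix a test function $\varphi \in \CD(U_n)$ and denote by $\tilde\varphi \in \CD(\k)$ its extension by zero to $\k$. Its Fourier transform $\widehat{\tilde\varphi}$ is then a Bruhat test function, hence compactly supported in some $\varpi^{-N_0}\CO_\k$. For any integer $N \geq n$, introduce the cutoff
\[
T_s^N := \CF_\k\big(\overline\Psi\, \mu_0^s\, \chi_{\varpi^{-N}\CO_\k}\big).
\]
Since $\overline\Psi\, \mu_0^s\, \chi_{\varpi^{-N}\CO_\k} \in \CD(\k)$, its Fourier transform is also in $\CD(\k)$; moreover the support argument of Lemma \ref{corsigmaphi} still applies (the cutoff is $\varpi^{-n}\CO_\k$-periodic for $N \geq n$), so $T_s^N \in \CD(U_n)$. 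Unwinding the pairings shows $T_s^N[\varphi] = T_s[\varphi]$ for all $N \geq N_0$, where $T_s := \CF_\k(\overline\Psi \mu_0^s)$ is the distribution we wish to study.

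The core computation evaluates $(T_{s_1}^{N_1} *_{U_n} T_{s_2}^{N_2})[\varphi]$ using only Fubini, legitimate because the $T_{s_i}^{N_i}$ are genuine test functions on the compact group $U_n$. Unfolding the convolution and rewriting $T_{s_2}^{N_2}(v^{-1}u)$ via the change of variable $t \mapsto vt$ in the defining Fourier integral (valid because $\mu_0$ and $\chi_{\varpi^{-N_2}\CO_\k}$ are invariant under dilations by $v \in \CO_\k^\times$) yields
\[
T_{s_2}^{N_2}(v^{-1}u) = \int_\k \overline\Psi(vt)\, \mu_0^{s_2}(t)\, \chi_{\varpi^{-N_2}\CO_\k}(t)\, \Psi(ut)\, dt.
\]
Swapping the orders of integration, pairing the $u$-integral against $\widehat{\tilde\varphi}$, and evaluating the $v$-integral gives
\[
\int_{U_n} T_{s_1}^{N_1}(v)\, \overline\Psi(vt)\, dv = \CF_\k\big(T_{s_1}^{N_1}\big)(-t) = \overline\Psi(t)\, \mu_0^{s_1}(t)\, \chi_{\varpi^{-N_1}\CO_\k}(t),
\]
where the last equality is Fourier inversion $\CF_\k^2 f(x) = f(-x)$. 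Multiplying the two factors collapses the cutoffs to $\chi_{\varpi^{-\min(N_1,N_2)}\CO_\k}$ and adds the exponents in $\mu_0$, producing $T_{s_1+s_2}^{\min(N_1,N_2)}[\varphi] = T_{s_1+s_2}[\varphi]$ whenever $\min(N_1,N_2) \geq N_0$.

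To conclude, I need that $(T_{s_1}^{N_1} *_{U_n} T_{s_2}^{N_2})[\varphi]$ agrees with $(T_{s_1} *_{U_n} T_{s_2})[\varphi]$ once $N_1, N_2$ are large. For this I pass to the tensor-product formulation $(T_{s_1} *_{U_n} T_{s_2})[\varphi] = (T_{s_1} \otimes T_{s_2})[\varphi \circ m]$, where $m : U_n \times U_n \to U_n$ is group multiplication. Since $\varphi \circ m$ (extended by zero) lies in $\CD(\k^2)$, its two-variable Fourier transform has compact support in $\k^2$, so for $N_1, N_2$ sufficiently large the cutoffs $\chi_{\varpi^{-N_i}\CO_\k}$ in the distributional pairings with $\overline\Psi\, \mu_0^{s_i}$ become redundant, giving $(T_{s_1}^{N_1} \otimes T_{s_2}^{N_2})[\varphi \circ m] = (T_{s_1} \otimes T_{s_2})[\varphi \circ m]$. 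The main obstacle is the lack of absolute convergence caused by the polynomial growth of $\mu_0^s$: the cutoff is essential for every use of Fubini, and the whole calculation relies crucially on the invariance of $\mu_0$ under $\CO_\k^\times$-dilations, which lets the substitution $t \mapsto vt$ pass through $\mu_0^{s_2}$ at no cost.
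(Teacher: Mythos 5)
Your proof is correct, and it follows a genuinely different regularization scheme from the paper's. The authors first assume $s_2<-1/2$ so that $\CF_\k(\overline\Psi\mu_0^{s_2})$ is an honest $L^1\cap L^2$ function, run a dominated-convergence argument over $\varpi^{-\ell}\CO_\k\nearrow\k$, exploit the \emph{inversion-invariance} of $\CF_\k(\overline\Psi\mu_0^{s_2})$ from Lemma~\ref{corsigmaphi}, and then bootstrap to general $(s_1,s_2)$ by writing $s_2=(n+1)s_2-ns_2$ and similar tricks (three cases). You instead put a sharp cutoff $\chi_{\varpi^{-N}\CO_\k}$ on \emph{both} symbols, turning each $\CF_\k(\overline\Psi\mu_0^{s_i}\chi_{\varpi^{-N_i}\CO_\k})$ into a genuine test function still supported in $U_n$; the pairings then stabilize for $N$ large because $\CF_\k(\widetilde\vf)$ and $\CF_{\k^2}(\widetilde{\vf\circ m})$ are compactly supported, and the whole computation reduces to Fubini, the substitution $t\mapsto vt$ (licensed by the $\CO_\k^\times$-dilation invariance of $\mu_0$ and of the cutoff), and Fourier inversion $\CF_\k^2f(x)=f(-x)$. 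This buys two simplifications: the argument is uniform in $s_1,s_2\in\R$, so the bootstrapping disappears; and you never invoke the inversion-invariance of $\CF_\k(\overline\Psi\mu_0^{s})$, only its support property. The paper's route makes the role of Lemma~\ref{corsigmaphi} more visible; yours is shorter and cleaner, but does require one extra structural observation — that the pairing $(T_{s_1}\ast_{U_n}T_{s_2})[\vf]=(T_{s_1}\otimes T_{s_2})[\vf\circ m]$ is also insensitive to the cutoffs once $N_1,N_2$ swallow the support of the two-variable Fourier transform — which you state correctly. Both proofs hinge on the same two facts: the support statement of Lemma~\ref{corsigmaphi} and the invariance of $\mu_0$ under dilations in $\CO_\k^\times$.
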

\begin{proof}
Assume first $s_1\in\R$ and $s_2<-1/2$. In this case, $\mu_0^{s_2}\in L^2(\k)$ and, since  $\CF_\k(\overline\Psi\mu_0^{s_2})$ is supported in $U_n$,
$\CF_\k(\overline\Psi\mu_0^{s_2})\in L^1(\k)\cap L^2(\k)$. Take $\vf\in\CD(U_n)$. Then,
we have by definition of the convolution product of a pair of distributions:
$$
\big\langle\CF_{{\bf k}}\big(\overline{\Psi}\mu_0^{s_1}\big)\ast_{U_n}
\CF_{{\bf k}}\big(\overline{\Psi}\mu_0^{s_2}\big)\big|\vf\big\rangle=
\big\langle\CF_{{\bf k}}\big(\overline{\Psi}\mu_0^{s_1}\big)\otimes
\CF_{{\bf k}}\big(\overline{\Psi}\mu_0^{s_2}\big)\big|\Delta\vf\big\rangle,
$$
where $\Delta:\CD(U_n)\to\CD(U_n\times U_n)$ is the coproduct, that is
$\Delta\vf(u_1,u_2)=\vf(u_1u_2)$. If we view  $\Delta\vf$ as an element of
$\CD(\k\times U_n)$ supported in $U_n\times U_n$, we get 
$$
\big\langle\CF_{{\bf k}}\big(\overline{\Psi}\mu_0^{s_1}\big)\ast_{U_n}
\CF_{{\bf k}}\big(\overline{\Psi}\mu_0^{s_2}\big)\big|\vf\big\rangle=
\big\langle\big(\overline{\Psi}\mu_0^{s_1}\big)\otimes
\CF_{{\bf k}}\big(\overline{\Psi}\mu_0^{s_2}\big)\big|\CF_\k\otimes{\rm Id}(\Delta\vf)\big\rangle,
$$
where the expression in the right hand side above is the evaluation of a distribution 
in $\CD'(\k\times U_n)$ against a test function in $\CD(\k\times U_n)$.
Since $\big(\overline{\Psi}\mu_0^{s_1}\big)$ is continuous  and 
$\CF_{{\bf k}}\big(\overline{\Psi}\mu_0^{s_2}\big)$ belongs to $L^1\cap L^2(U_n)$, we  have
\begin{align*}
\big\langle\CF_{{\bf k}}\big(\overline{\Psi}\mu_0^{s_1}\big)\ast_{U_n}
\CF_{{\bf k}}\big(\overline{\Psi}\mu_0^{s_2}\big)\big|\vf\big\rangle&=\int_{\k\times U_n}
\overline{\Psi}(t_1) \, \mu_0^{s_1}(t_1)\,
\CF_{{\bf k}}\big(\overline{\Psi}\mu_0^{s_2}\big)(u_2)\,\CF_\k\otimes{\rm Id}(\Delta\vf)
(t_1,u_2)\, dt_1\,du_2\\
&=\int_{\k\times U_n}
\overline{\Psi}(t_1) \, \mu_0^{s_1}(t_1)\,
\CF_{{\bf k}}\big(\overline{\Psi}\mu_0^{s_2}\big)(u_2)\Big(\int_{U_n}\Psi(t_1u_1)\,\vf(u_1u_2)\,du_1
\Big)dt_1\,du_2.
\end{align*}
Using dominated convergence, Fubini's Theorem, the change of variable
$u_1\mapsto u_2^{-1}u_1$, the invariance of 
$\CF_{{\bf k}}\big(\overline{\Psi}\mu_0^{s_2}\big)$  under group inversion 
(see Lemma \ref{corsigmaphi}) and the fact that $\vf$ can be seen as an element of $\CD(\k)$
supported on $U_n$, the above expression is equal to:
\begin{align*}
&\lim_{\ell\to+\infty}\int_{\varpi^{-\ell}\CO_\k\times U_n}
\overline{\Psi}(t_1) \, \mu_0^{s_1}(t_1)\,
\CF_{{\bf k}}\big(\overline{\Psi}\mu_0^{s_2}\big)(u_2)\Big(\int_{U_n}\Psi(t_1u_1)\,\vf(u_1u_2)\,du_1
\Big)dt_1\,du_2\\
&=\lim_{\ell\to+\infty}\int_{U_n\times\varpi^{-\ell}\CO_\k\times U_n}
\overline{\Psi}(t_1) \, \mu_0^{s_1}(t_1)\,
\CF_{{\bf k}}\big(\overline{\Psi}\mu_0^{s_2}\big)(u_2)\,\Psi(t_1u_1)\,\vf(u_1u_2)\,du_1
\,dt_1\,du_2\\
&=\lim_{\ell\to+\infty}\int_{U_n\times\varpi^{-\ell}\CO_\k\times U_n}
\overline{\Psi}(t_1) \, \mu_0^{s_1}(t_1)\,
\CF_{{\bf k}}\big(\overline{\Psi}\mu_0^{s_2}\big)(u_2)\,\Psi(t_1u_2^{-1}u_1)\,\vf(u_1)\,du_1
\,dt_1\,du_2\\
&=\lim_{\ell\to+\infty}\int_{U_n\times\varpi^{-\ell}\CO_\k\times U_n}
\overline{\Psi}(t_1) \, \mu_0^{s_1}(t_1)\,
\CF_{{\bf k}}\big(\overline{\Psi}\mu_0^{s_2}\big)(u_2)\,\Psi(t_1u_2u_1)\,\vf(u_1)\,du_1
\,dt_1\,du_2\\
&=\lim_{\ell\to+\infty}\int_{\varpi^{-\ell}\CO_\k}
\overline{\Psi}(t_1) \, \mu_0^{s_1}(t_1)\,
\Big(\int_{U_n}\CF_{{\bf k}}\big(\overline{\Psi}\mu_0^{s_2}\big)(u_2)\,\CF_\k(\vf)(t_1u_2)
\,du_2\Big)\,dt_1.
\end{align*}
 Let $D_x$, $x\in\k^\times$, be the dilation operator defined by 
$D_xf(t)=f(x^{-1}t)$. Since $\CF_{{\bf k}}\big(\overline{\Psi}\mu_0^{s_2}\big)$
is real valued and supported in $U_n$ (see Lemma \ref{corsigmaphi}) we have for $t_1\in\k^\times$:
\begin{align*}
\int_{U_n}\CF_{{\bf k}}\big(\overline{\Psi}\mu_0^{s_2}\big)(u_2)\,\CF_\k\big(\vf\big)(t_1u_2)
\,du_2&=|t_1|_\k^{-1}
\int_{\k}\CF_{{\bf k}}\big(\overline{\Psi}\mu_0^{s_2}\big)(t_2)\,\CF_\k\big(D_{t_1}\vf\big)(t_2)
\,dt_2\\
=|t_1|_\k^{-1}\big\langle \CF_{{\bf k}}\big(\overline{\Psi}\mu_0^{s_2}\big),\CF_\k\big(D_{t_1}\vf\big)\big\rangle
&=|t_1|_\k^{-1}\big\langle \overline{\Psi}\mu_0^{s_2},D_{t_1}\vf\big\rangle
=\big\langle D_{t_1^{-1}}\big(\overline{\Psi}\mu_0^{s_2}\big),\vf\big\rangle\\
=\int_{U_n}\Psi(t_1u_2)\,\mu_0^{s_2}(t_1u_2)\,\vf(u_2)\,du_2
&=
\mu_0^{s_2}(t_1)\int_{U_n}\Psi(t_1u_2)\,\vf(u_2)\,du_2=\mu_0^{s_2}(t_1)\,\CF_\k\big(\vf\big)(t_1),
\end{align*}
where we used Plancherel for $\k$ in the third equality and the invariance of $\mu_0$ under dilations 
in $\CO_\k^\times$ in the sixth.
Hence, we get
\begin{align*}
\big\langle\CF_{{\bf k}}\big(\overline{\Psi}\mu_0^{s_1}\big)\ast_{U_n}
\CF_{{\bf k}}\big(\overline{\Psi}\mu_0^{s_2}\big)\big|\vf\big\rangle&=
\lim_{\ell\to+\infty}\int_{\varpi^{-\ell}\CO_\k}
\overline{\Psi}(t_1) \, \mu_0^{s_1}(t_1)\,\mu_0^{s_2}(t_1)\,\CF_\k\big(\vf\big)(t_1)
\,dt_1\\
&=\int_{\k}
\overline{\Psi}(t_1) \, \mu_0^{s_1+s_2}(t_1)\,\CF_\k\big(\vf\big)(t_1)
\,dt_1=\big\langle\CF_\k\big(\overline{\Psi} \mu_0^{s_1+s_2}\big)\big|\vf\big\rangle.
\end{align*}

Assume now that $s_1\in\R$, $-1/2\leq s_2<0$ and chose $n\in\N$ large enough such that 
$(n+1)s_2<-1/2$ and $s_1+(n+1)s_2<-1/2$. From what precedes we have
\begin{align*}
\CF_{{\bf k}}\big(\overline{\Psi}\mu_0^{s_1}\big)\ast_{U_n}
\CF_{{\bf k}}\big(\overline{\Psi}\mu_0^{s_2}\big)&=
\CF_{{\bf k}}\big(\overline{\Psi}\mu_0^{s_1}\big)\ast_{U_n}
\CF_{{\bf k}}\big(\overline{\Psi}\mu_0^{(n+1)s_2}\big)\ast_{U_n}
\CF_{{\bf k}}\big(\overline{\Psi}\mu_0^{-ns_2}\big)\\&=
\CF_{{\bf k}}\big(\overline{\Psi}\mu_0^{s_1+(n+1)s_2}\big)\ast_{U_n}
\CF_{{\bf k}}\big(\overline{\Psi}\mu_0^{-ns_2}\big)=
\CF_{{\bf k}}\big(\overline{\Psi}\mu_0^{s_1+s_2}\big).
\end{align*}

Assume last  $s_1\in\R$, $s_2>0$ and chose $n\in\N$ large enough such that 
$-(n-1)s_2<-1/2$ and $-(n-1)s_2+s_1<-1/2$. From what precedes we have
\begin{align*}
\CF_{{\bf k}}\big(\overline{\Psi}\mu_0^{s_1}\big)\ast_{U_n}
\CF_{{\bf k}}\big(\overline{\Psi}\mu_0^{s_2}\big)&=
\CF_{{\bf k}}\big(\overline{\Psi}\mu_0^{s_1}\big)\ast_{U_n}
\CF_{{\bf k}}\big(\overline{\Psi}\mu_0^{-(n-1)s_2}\big)\ast_{U_n}
\CF_{{\bf k}}\big(\overline{\Psi}\mu_0^{ns_2}\big)\\&=
\CF_{{\bf k}}\big(\overline{\Psi}\mu_0^{s_1-(n-1)s_2}\big)\ast_{U_n}
\CF_{{\bf k}}\big(\overline{\Psi}\mu_0^{ns_2}\big)=
\CF_{{\bf k}}\big(\overline{\Psi}\mu_0^{s_1+s_2}\big),
\end{align*}
and the proof is complete.
\end{proof}

Let $\rho:X_n\to \CU(L^2(X_n))$, be the right regular representation. 
We are especially interested in the following family of right convolution operators on the group $X_n$:  
$$
J^{s}:=\int_{U_n} \CF_{{\bf k}}\big(\overline{\Psi}\mu_0^{s}\big)(u)\,\rho_{(u,[0])}\,du,
\quad  s\in\R.
$$ 

The next result justifies our choice of notation for the operators $J^s$:

\begin{prop}
\label{grp-conv}
Let $s,s_1,s_2\in \R$. The operator $J^s$ acts continuously on $\CD(X_n)$ and, moreover,
we have $J^0=\Id$ and $J^{s_1}\, J^{s_2}=J^{s_1+s_2}$.
\end{prop}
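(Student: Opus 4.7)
The plan is to realize $J^s$ as right convolution in the $U_n$-variable by the distribution $T_s := \CF_{{\bf k}}\big(\overline{\Psi}\mu_0^{s}\big)$, and then to transfer the three claims to statements about $(\CD'(U_n),\ast_{U_n})$. By Lemma \ref{corsigmaphi}, $T_s \in \CD'(U_n)$. Moreover the group law of $X_n$ gives $(u_1,[0]).(u_2,[0])=(u_1u_2,[0])$, so for $f\in\CD(X_n)$ and $[g]\in X_n$ the map $u\mapsto f([g].(u,[0]))$ lies in $\CD(U_n)$, and the formal integral defining $J^s$ is really the pairing
$$
J^s f([g]) \;=\; T_s\big[u\mapsto f([g].(u,[0]))\big].
$$

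For continuity on $\CD(X_n)$, I would observe that $\CD(X_n)$ is spanned, in the convergence sense described before Proposition \ref{exclud}'s discussion, by finite sums of factorized functions $\vf(u)\otimes\chi(t)$ with $\vf\in\CD(U_n)$ and $\chi$ finitely supported on $\Gamma_n$. Since $J^s$ acts on the $U_n$-variable alone and preserves the $\Gamma_n$-support, the continuity reduces to the (known) continuity of convolution $\CD'(U_n)\times\CD(U_n)\to\CD(U_n)$ with the compact group $U_n$, invoked already before Lemma \ref{strange}. Concretely, if $\{f_\ell\}\to 0$ in $\CD(X_n)$ then the $f_\ell$ have a common $\Gamma_n$-support and a common $U_m$-invariance domain in the $U_n$-variable, and this is preserved by right convolution by $T_s$.

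For $J^0=\Id$, I would compute $T_0=\CF_{{\bf k}}(\overline\Psi)$ directly: for $\vf\in\CD({\bf k})$,
$$
\CF_{\bf k}(\overline\Psi)[\vf] \;=\; \int_{\bf k}\overline\Psi(t)\,\CF_{\bf k}\vf(t)\,dt \;=\; \int_{\bf k}\vf(s)\int_{\bf k}\Psi\big((s-1)t\big)\,dt\,ds \;=\; \vf(1),
$$
so $T_0=\delta_1$. Hence $J^0 f([g])=f([g].(1,[0]))=f([g])$.

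Finally, for the multiplicative law, I would use that right convolution in the $U_n$-variable defines an algebra homomorphism from $(\CD'(U_n),\ast_{U_n})$ to $\End(\CD(X_n))$: unrolling the pairing,
\begin{align*}
J^{s_1}J^{s_2}f([g]) &= T_{s_1}\!\bigl[u_1\mapsto T_{s_2}[u_2\mapsto f([g].(u_1,[0]).(u_2,[0]))]\bigr] \\
&= T_{s_1}\!\bigl[u_1\mapsto T_{s_2}[u_2\mapsto f([g].(u_1u_2,[0]))]\bigr] \\
&= (T_{s_1}\ast_{U_n} T_{s_2})[u\mapsto f([g].(u,[0]))],
\end{align*}
and then Lemma \ref{strange} gives $T_{s_1}\ast_{U_n}T_{s_2}=T_{s_1+s_2}$, hence $J^{s_1}J^{s_2}=J^{s_1+s_2}$. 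The only real subtlety is the Fubini/tensor-product justification for the double distributional pairing in the first line, which is standard given that $T_{s_i}$ is supported on the compact group $U_n$ and $f$ is Bruhat-test; this is also why it was essential to have $T_s\in\CD'(U_n)$ rather than merely $\CD'({\bf k})$.
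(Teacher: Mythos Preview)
Your reduction of the identity $J^{s_1}J^{s_2}=J^{s_1+s_2}$ to Lemma~\ref{strange}, and your direct computation $T_0=\CF_\k(\overline\Psi)=\delta_1$ for $J^0=\Id$, are correct and are exactly what the paper does (the paper simply says ``follows immediately from Lemma~\ref{strange}'').

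There is, however, a genuine slip in your continuity argument. You assert that ``$J^s$ acts on the $U_n$-variable alone and preserves the $\Gamma_n$-support''. This is not true: since $X_n=U_n\ltimes\Gamma_n$ is a \emph{semidirect} product, the right translation $\rho_{(u,[0])}$ sends $(u_0,[t_0])$ to $(u_0u,[u^{-1}t_0])$, so the $\Gamma_n$-variable moves too. In particular, for a tensor $\vf\otimes\delta_{[t_1]}$ the function $J^s(\vf\otimes\delta_{[t_1]})$ has $\Gamma_n$-support equal to the whole $U_n$-orbit of $[t_1]$, not just $\{[t_1]\}$; so the picture of ``fiberwise $U_n$-convolution with $[t]$ fixed'' is wrong, and the reduction to $\CD'(U_n)\times\CD(U_n)\to\CD(U_n)$ does not go through as stated. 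Your ``Concretely'' sentence is closer to the truth---a common $\Gamma_n$-support $S$ becomes the common (still finite, because $U_n$-orbits in $\Gamma_n$ are finite by compactness/discreteness) support $U_n\cdot S$, and the $U_m$-invariance in the first variable is indeed preserved---but that is a different argument from the one you announced and needs to be made explicit.

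The paper sidesteps this entirely by viewing $J^s$ as right convolution \emph{on $X_n$} by the compactly supported distribution $\CF_\k(\overline\Psi\mu_0^s)\otimes\delta_{[0]}\in\CD'(X_n)$ and invoking Bruhat's general result \cite[Proposition~7]{Bruhat} that convolution by compactly supported distributions is continuous on $\CD(X_n)$. That is both shorter and avoids the semidirect-product bookkeeping.
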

\begin{proof}
For the first part, we use  \cite[Proposition 7]{Bruhat} which
shows that the operator of convolution  by a compactly 
supported distribution on $X_n$ is continuous  on $\CD(X_n)$. The second part follows immediately from Lemma \ref{strange}.
\end{proof}

Since $\CF_{{\bf k}}\big(\overline{\Psi}\mu_0^{s}\big)$ is real valued and invariant under the
group inversion in $U_n$, one easily deduce that $J^s$ is formally self-adjoint. In fact, it is
not difficult to see that on the domain $\CD(X_n)$, $J^s$ is self-adjoint. Moreover, Proposition
\ref{grp-conv} shows that $J^s=J^{\frac s2}J^{\frac s2}$  on $\CD(X_n)$ and therefore 
$J^s$ is non-negative.

By transposition, the operators $J^s$, $s\in\R$, act by homeomorphisms on $\CD'(X_n)$. 
Now, for $x\in\k$ fixed, we let $\widehat \Psi_x\in L^\infty(X_n)$ be defined by $\widehat \Psi_x(u,[t]):=
\Psi_x(u)=\Psi(xu)$. As a distribution on $X_n$, $\widehat\Psi_x$ is an eigenfunction of $J^s$
for all $s\in\R$:
\begin{lem}
\label{eigenPsi}
For every $x\in\k$ and $s\in\R$, we have
$$
J^s\widehat\Psi_x=\mu_0^s(x)\,\widehat\Psi_x.
$$
\end{lem}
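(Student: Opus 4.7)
Since $\widehat\Psi_x(u,[t])=\Psi(xu)$ is independent of $[t]$, and since the action of $\rho_{(u_0,[0])}$ on such a function produces another function independent of $[t]$, it suffices to track the $u$-variable. Using the group law of $X_n$, one computes
\[
\rho_{(u_0,[0])}\widehat\Psi_x(u,[t])=\widehat\Psi_x(uu_0,[u_0^{-1}t])=\Psi(xuu_0),
\]
so that, by definition of $J^s$,
\[
J^s\widehat\Psi_x(u,[t])=\bigl\langle \CF_{\k}\bigl(\overline\Psi\mu_0^s\bigr),\,\chi_{U_n}\Psi_{xu}\bigr\rangle,
\]
where the bracket is the distributional pairing on $\k$, viewing $\CF_\k(\overline\Psi\mu_0^s)$ as a distribution on $\k$ supported in $U_n$ (by Lemma \ref{corsigmaphi}) and the test function $\chi_{U_n}\Psi_{xu}$ as an element of $\CD(\k)$.

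By duality of the Fourier transform on $\CD'(\k)$, this pairing equals $\bigl\langle \overline\Psi\mu_0^s,\,\CF_\k(\chi_{U_n}\Psi_{xu})\bigr\rangle$. A direct calculation, using $\chi_{U_n}(t)=\chi_{\varpi^n\CO_\k}(t-1)$ and our normalization of the Haar measure, gives
\[
\CF_\k(\chi_{U_n})(s)=q^{-n}\,\Psi(s)\,\chi_{\varpi^{-n}\CO_\k}(s),
\]
and since multiplication by $\Psi_{xu}$ on the space side corresponds to translation by $xu$ on the frequency side,
\[
\CF_\k(\chi_{U_n}\Psi_{xu})(s)=q^{-n}\,\Psi(s+xu)\,\chi_{\varpi^{-n}\CO_\k}(s+xu).
\]
This is compactly supported, so the pairing with the locally integrable function $\overline\Psi\mu_0^s$ is an absolutely convergent ordinary integral (this is the only delicate point, since for $s>0$ the weight $\mu_0^s$ is unbounded).

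Substituting and cancelling $\overline\Psi$ against $\Psi$,
\[
\langle \overline\Psi\mu_0^s,\,\CF_\k(\chi_{U_n}\Psi_{xu})\rangle
=q^{-n}\Psi(xu)\int_{-xu+\varpi^{-n}\CO_\k}\mu_0^s(t)\,dt.
\]
Now $\mu_0$ is invariant under translations in $\varpi^{-n}\CO_\k$, so $\mu_0^s$ is constant on the ball $-xu+\varpi^{-n}\CO_\k$ with value $\mu_0^s(-xu)=\mu_0^s(xu)$, and since $u\in U_n\subset\CO_\k^\times$ one has $|xu|_\k=|x|_\k$, hence $\mu_0(xu)=\mu_0(x)$. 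Together with $\mathrm{Vol}(\varpi^{-n}\CO_\k)=q^n$, the integral evaluates to $\mu_0^s(x)\Psi(xu)=\mu_0^s(x)\widehat\Psi_x(u,[t])$, which is the claim.

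The only genuine obstacle is rigorously justifying the duality step when $s>0$ so that $\overline\Psi\mu_0^s$ is only a tempered distribution, not an $L^1$ function; this is handled by the fact that $\CF_\k(\chi_{U_n}\Psi_{xu})$ is compactly supported, reducing the pairing to a convergent integral on a ball, exactly as was done repeatedly in the proof of Lemma \ref{strange}.
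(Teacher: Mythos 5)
Your proof is correct, but it takes a genuinely different route from the paper's. The paper first reduces to the case $s<-1/2$ by a bootstrap that exploits the semigroup law $J^{s_1}J^{s_2}=J^{s_1+s_2}$ of Proposition \ref{grp-conv} (which itself rests on Lemma \ref{strange}); in that regime $\CF_\k(\overline\Psi\mu_0^s)$ is an $L^1\cap L^2$ function, the convolution integral is manifestly absolutely convergent, and the answer drops out by viewing the $u_0$-integral over $U_n$ as an integral over $\k$ and invoking Fourier inversion, $\CF_\k^{-1}\CF_\k(\overline\Psi\mu_0^s)(-xu)=\overline\Psi(-xu)\mu_0^s(-xu)$. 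You instead treat all $s$ at once: you transfer the distributional pairing to the Fourier side, compute $\CF_\k(\chi_{U_n})$ explicitly, and exploit the fact that $\CF_\k(\chi_{U_n}\Psi_{xu})$ is a bump on the ball $-xu+\varpi^{-n}\CO_\k$, so that pairing with the locally integrable (but possibly unbounded) function $\overline\Psi\mu_0^s$ is an honest convergent integral. This is the same kind of "reduce to a compact ball" device that Lemma \ref{strange} uses, and you correctly flag that as the delicate point. The trade-off is: your argument avoids invoking the convolution semigroup law entirely and is more self-contained; the paper's argument is shorter once the semigroup law is in hand and is uniform with the style of the surrounding lemmas. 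One small remark: you describe $\overline\Psi\mu_0^s$ for $s>0$ as "only a tempered distribution, not an $L^1$ function" — it is in fact always a locally integrable continuous function, just not globally integrable; the real issue is precisely global integrability, which the compact support of the transformed test function resolves, as you say. Also, to make the step "$J^s\widehat\Psi_x(u,[t])=\langle\CF_\k(\overline\Psi\mu_0^s),\chi_{U_n}\Psi_{xu}\rangle$" fully rigorous one should note that $J^s$ acting on the bounded locally constant function $\widehat\Psi_x$ (viewed in $\CD'(X_n)$ by transposition) agrees with the pointwise convolution of the compactly supported distribution $\CF_\k(\overline\Psi\mu_0^s)\otimes\delta_{[0]}$ against the regular function $\widehat\Psi_x$, which is a locally constant function; this is standard in Bruhat's framework, but the paper sidesteps it by only ever evaluating the $L^1$-kernel in the regime $s<-1/2$.
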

\begin{proof}
We may assume without lost of generality that $s<-1/2$. Indeed,
once this case has been proven, we will then have for $s>1/2$ and $\vf\in\CD(X_n)$:
$$
\langle J^s\widehat\Psi_x|\vf\rangle=\langle\widehat\Psi_x|J^s\vf\rangle=\langle\widehat\Psi_x|J^{-s}J^{2s}\vf\rangle
=\langle J^{-s}\widehat\Psi_x|J^{2s}\vf\rangle=
\mu_0^{-s}(x)\langle\widehat\Psi_x|J^{2s}\vf\rangle=\mu_0^{-s}(x)\langle J^{2s}\widehat\Psi_x|\vf
\rangle.
$$ 
Hence $J^s\widehat\Psi_x=\mu_0^{-s}(x)J^{2s}\widehat\Psi_x$ and 
applying $J^{-s}$ on both sides gives
$\widehat\Psi_x=\mu_0^{-s}(x)J^{s}\widehat\Psi_x$.
Thus, if the result holds for $s<-1/2$ then it also holds for $s>1/2$. 
Moreover, if the result holds for $|s|>1/2$ then it also holds for $0<|s|\leq1/2$.
Indeed, for $0<s\leq1/2$ chose $n\in\N$ so large that $(n-1)s>1/2$. Then we have
$J^s\widehat\Psi_x=J^{ns}J^{(-n+1)s}\widehat\Psi_x=
\mu_0(x)^{(-n+1)s}
J^{ns}\widehat\Psi_x=\mu_0(x)^{(-n+1)s}
\mu_0(x)^{ns}\widehat\Psi_x$. Passing from $0<s\leq1/2$ to $-1/2\leq s<0$ as
before, we are done.

So we just need to consider  $s<-1/2$.
In this case, $\mu_0^s\in L^2(\k)$ and thus $\CF_{{\bf k}}\big(\overline{\Psi}\mu_0^{s}\big)
\in L^2(\k)\cap L^1(\k)$. (Since $\CF_{{\bf k}}\big(\overline{\Psi}\mu_0^{s}\big)$
  is supported on $U_n$.) 
We then have
\begin{align*}
J^s\widehat\Psi_x(u,[t])&=\int_{U_n}\CF_{{\bf k}}\big(\overline{\Psi}\mu_0^{s}\big)(u_0)
\widehat\Psi_x\big((u,[t])(u_0,[0])\big)\,du_0\\
&=\int_{U_n}\CF_{{\bf k}}\big(\overline{\Psi}\mu_0^{s}\big)(u_0)
\widehat\Psi_x\big(uu_0,[u_0^{-1}t]\big)\,du_0
=\int_{U_n}\CF_{{\bf k}}\big(\overline{\Psi}\mu_0^{s}\big)(u_0)
\Psi(xuu_0)\,du_0.
\end{align*}
Viewing $\CF_{{\bf k}}\big(\overline{\Psi}\mu_0^{s}\big)$ as an $L^2$-function on $\k$
supported on $U_n$, we get
\begin{align*}
J^s\widehat\Psi_x(u,[t])&
=\int_{\k}\CF_{{\bf k}}\big(\overline{\Psi}\mu_0^{s}\big)(t_0)
\Psi(xut_0)\,dt_0=\CF_\k^{-1}\CF_{{\bf k}}\big(\overline{\Psi}\mu_0^{s}\big)(-xu)=
\Psi(xu)\mu_0^s(-xu)=\mu_0^s(x)\widehat\Psi_x(u,[t]),
\end{align*}
where the last equality follows by invariance of $\mu_0$ by dilations in $\CO_\k^\times$.
\end{proof}

We now introduce the following
subspace of distributions on $X_n$:
$$
\CB(X_n) := \Big \{  F \in \CD'(X_n) : \forall j  \in\N, \, J^jF \in L^{\infty}(X_n) \Big \}.
$$
We endow $\CB(X_n)$ with the topology associated with the family of seminorms:
\begin{equation}
F\mapsto \|J^j F\|_\infty, \;\;\; \forall j \in\N. 
\end{equation}
The main properties of the space $\CB(X_n)$ are summarized in the next proposition. There,
we denote by $C_{ru}(X_n)$ the $C^*$-algebra of bounded and right-uniformly continuous
complex valued functions on $X_n$ (our convention for the right-uniform structure is the one 
which yields strong continuity for the right regular action) and by $C_{ru}^\infty(X_n)$ the subspace
of $C_{ru}(X_n)$ on which the right action is regular (or smooth) in the sense of Bruhat.
The proof of the following  result is very close  to \cite[Lemma 3.2]{GJ1} but for the sake of completeness 
we give here the detailed arguments. 
\begin{prop} 
\label{frechetfonct}
The space $\CB(X_n)$ is Fr\'echet and 
we have $C_{ru}^\infty(X_n)\subset \CB(X_n) \subset C_{ru}(X_n)$ with dense 
inclusions. 
\end{prop}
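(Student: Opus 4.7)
The plan uses three key tools: the operators $J^s$ (right convolution by $g_s := \CF_\k(\overline\Psi\mu_0^s)$), the semigroup law $J^{s_1}J^{s_2} = J^{s_1+s_2}$ from Proposition~\ref{grp-conv}, and the integrability $\mu_0^{-2} \in L^1(\k)$, which combined with Lemma~\ref{corsigmaphi} makes $g_{-2}$ a continuous function supported in $U_n$ and hence an element of $L^1(U_n)$.

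\emph{Step 1 ($C_{ru}^\infty(X_n) \subset \CB(X_n)$).} Let $F \in C_{ru}^\infty(X_n)$; there exists $m \geq n$ such that $F$ is right-invariant under $U_m \times \{[0]\}$. For each fixed $x \in X_n$, the map $u \mapsto F(x(u,[0]))$ on $U_n$ is constant on cosets of $U_m$, hence lies in $\CD(U_n)$. The pairing with the distribution $g_j \in \CD'(U_n)$ then collapses to a finite sum
$$
J^j F(x) = \sum_{\overline u_0 \in U_n/U_m} g_j\big[\chi_{u_0 U_m}\big]\, F(x(u_0,[0])),
$$
yielding $\|J^j F\|_\infty \leq \|F\|_\infty \sum_{\overline u_0} |g_j[\chi_{u_0 U_m}]| < \infty$ for every $j \in \N$, so $F \in \CB(X_n)$.

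\emph{Step 2 ($\CB(X_n) \subset C_{ru}(X_n)$).} Let $F \in \CB(X_n)$, so $J^2 F \in L^\infty$. Proposition~\ref{grp-conv} gives $F = J^{-2}(J^2 F)$, the right convolution of an $L^\infty$-function against $g_{-2} \in L^1(U_n)$. A neighborhood base of $e$ in $X_n$ is $\{U_m \times \{[0]\}\}_{m \geq n}$ (since $\Gamma_n$ is discrete), so it suffices to control $F(x(v,[0])) - F(x)$ for $v \in U_m$ uniformly in $x$. The change of variable $u \mapsto v^{-1}u$ in the Haar measure of $U_n$ gives
$$
F(x(v,[0])) - F(x) = \int_{U_n} \big[g_{-2}(v^{-1}u) - g_{-2}(u)\big]\, J^2 F(x(u,[0]))\, du,
$$
hence $\sup_x |F(x(v,[0])) - F(x)| \leq \|J^2 F\|_\infty\, \|\lambda_v g_{-2} - g_{-2}\|_{L^1(U_n)}$, which tends to $0$ as $v \to 1$ by continuity of left translation in $L^1$.

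\emph{Step 3 (density).} Set $\phi_m := {\rm Vol}(U_m)^{-1}\, \chi_{U_m \times \{[0]\}} \in L^1(X_n)$. For any $F \in L^\infty(X_n)$, the right convolution $F * \phi_m$ is bounded and, thanks to the abelianness of $U_n$, right-invariant under $U_m \times \{[0]\}$, hence lies in $C_{ru}^\infty(X_n)$. Since $\phi_m$ and all the kernels of $J^j$ are supported in the Abelian subgroup $U_n \times \{[0]\}$, these right convolutions commute: $J^j(F * \phi_m) = (J^j F) * \phi_m$. For $F \in \CB$, Step 2 applied to each $J^j F$ (valid because $J^{j+2} F \in L^\infty$) gives $J^j F \in C_{ru}$, so the standard approximate-identity estimate yields $(J^j F) * \phi_m \to J^j F$ uniformly; hence $F * \phi_m \to F$ in the topology of $\CB(X_n)$. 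This proves density of $C_{ru}^\infty$ in $\CB$, and density of $\CB$ in $C_{ru}$ then follows from the classical density of $C_{ru}^\infty$ in $C_{ru}$. The main subtlety is the distributional pairing in Step 1 when $j \geq 0$ and $g_j$ is genuinely a distribution rather than a function; this is resolved once and for all by the local constancy built into $C_{ru}^\infty$, after which the remaining argument is essentially formal.
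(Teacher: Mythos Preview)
Your argument is correct and in fact sharper in places than the paper's own proof.

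For the inclusion $\CB(X_n)\subset C_{ru}(X_n)$ you and the paper do essentially the same thing: write $F=J^{-2}J^2F$ and exploit that $g_{-2}\in L^1(U_n)$. The paper phrases this via the identity $C_{ru}=S(L^1\ast_{X_n}L^\infty)$ from Hewitt--Ross, whereas you give the direct $\|\lambda_v g_{-2}-g_{-2}\|_{L^1}\to 0$ estimate; these are the same mechanism.

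For $C_{ru}^\infty(X_n)\subset\CB(X_n)$ the approaches genuinely differ. The paper invokes the Dixmier--Malliavin theorem (in Meyer's form for general locally compact groups) to write any smooth vector as a finite sum of $\rho(f)F$ with $f\in\CD(X_n)$, and then uses $J^j\rho(f)=\rho(SJ^jSf)$. Your route is more elementary and specific to the totally disconnected setting: a Bruhat-smooth vector has open stabilizer, hence is right-invariant under some $U_m\times\{[0]\}$, which collapses the distributional pairing with $g_j$ to a finite sum over $U_n/U_m$. This avoids the Dixmier--Malliavin machinery entirely, at the cost of being tied to the $p$-adic situation.

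For density you actually prove more than the paper does. The paper only records that $C_{ru}^\infty$ is dense in $C_{ru}$ for the sup-norm, whereas your approximate-identity argument $F\ast\phi_m\to F$ establishes convergence in the full Fr\'echet topology of $\CB(X_n)$, using crucially that $J^j$ commutes with convolution by $\phi_m$ (both being right convolutions supported in the Abelian subgroup $U_n\times\{[0]\}$) and that each $J^jF$ is itself right-uniformly continuous by Step~2.
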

\begin{proof}
That $\CB(X_n)$ is Fr\'echet follows from standard arguments.
By   \cite[(32.45) (b), p. 283]{HR2}, it follows that the space of left-uniformly continuous and bounded
functions on $X_n$ is exactly $L^1(X_n)\ast_{X_n} L^\infty(X_n)$, where $\ast_{X_n}$
denotes the convolution product on $X_n$. (Be aware that for  uniform
structures, we use the convention opposite to those of \cite{HR2}.) Hence, with $S$ 
antipode of $L^\infty(X_n)$ (i.e$.$ $SF([g])=F([g]^{-1})$), we need to show that
$S\CB(X_n)\subset L^1(X_n)\ast_{X_n} L^\infty(X_n)$. Define 
$\Phi:=\CF_{{\bf k}}\big(\overline{\Psi}\mu_0^{-2})\otimes  \delta_{[0]}\in L^1(X_n)$. Then we have
for any $F\in\CB(X_n)$, and with $\lambda$ and $\rho$ the left and right regular representations (possibly in their integrated version), we get since
$J^{-2}=\rho(\Phi)$:
$$
SF=SJ^{-2}J^2F=S\rho(\Phi)J^2F=\lambda(\Phi)  SJ^2F=\Phi\ast_{X_n} SJ^2F.
$$
Since $J^2F\in\CB(X_n)\subset L^\infty(X_n)$, we deduce that $SJ^2F\in L^\infty(X_n)$
which entails that $\CB(X_n)\subset C_{ru}(X_n)$. 
The inclusion $C_{ru}^\infty(X_n)\subset \CB(X_n)$ 
follows from the Dixmier-Malliavin theorem, proven  for arbitrary locally compact groups
by Meyer  in \cite[Theorem 4.16]{Meyer}. Indeed, the latter states that $C_{ru}^\infty(X_n)$ coincides with
its Garding space, that is the space of finite sums of elements of the form 
$$
\rho(f)F:=\int_{X_n}f([g])\,\rho_{[g]}(F)\,d[g]\;,\qquad f\in\CD(X_n)\,,\;F\in C_{ru}(X_n).
$$
For $j\in\N$ and $f\in\CD(X_n)$, we have
$$
J^j \rho(f)=\int_{U_n\times X_n} \CF_{{\bf k}}\big(\overline{\Psi}\mu_0^{j}\big)(u)\,f([g])\,\rho_{(u,[0])[g]}\,dud[g]
=\int_{U_n\times X_n} \CF_{{\bf k}}\big(\overline{\Psi}\mu_0^{j}\big)(u)\,f((u^{-1},[0])[g])\,\rho_{[g]}\,dud[g],
$$
we easily  deduce  that $J^j \rho(f)=\rho(SJ^jSf)$. Hence, we get
$$
\|J^j \rho(f)F\|_\infty=\|\rho(SJ^jSf)F\|_\infty\leq \|SJ^jSf\|_1\|F\|_\infty<\infty,
$$
and therefore $C_{ru}^\infty(X_n)\subset \CB(X_n)$.
Finally, density of $C_{ru}^\infty(X_n)$ in $C_{ru}(X_n)$ is a consequence of the strong continuity of the right regular action on the $C^*$-algebra of right-uniformly
continuous and bounded functions on $X_n$.
\end{proof}

The arguments of the next result  are conceptually similar to
those of \cite[Corollary 3.6]{GJ1} but technically different.
\begin{prop} 
The space $\CB(X_n)$ is a Fr\'echet algebra under the point-wise product. More precisely, for all $j \in\N$ and all 
$F_1, \, F_2 \in \CB(X_n)$, we have:
\begin{equation}
\label{SNEST}
\big\|J^j(F_1 \, F_2)\big\|_\infty \leq q^{-2n}
\|\mu_0^{-2}\|_1^2 \, \big\|J^{j+2}F_1\big\|_\infty \,  \big\|J^{j+2}F_2\big\|_\infty. 
\end{equation}
\end{prop}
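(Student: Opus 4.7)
My plan is to establish the Fr\'echet property of $\CB(X_n)$ (routine) and the seminorm estimate \eqref{SNEST}; the algebra structure under pointwise product follows at once. The Fr\'echet property is clear: $\CB(X_n)$ carries the countable separating family of seminorms $F \mapsto \|J^j F\|_\infty$, and completeness follows from completeness of $L^\infty(X_n)$ combined with the continuity of each $J^j$ on $\CD'(X_n)$ established in Proposition \ref{grp-conv}.

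For the estimate \eqref{SNEST}, my strategy is to represent $F_i = J^{-(j+2)}(J^{j+2} F_i)$, exploiting that $J^{-(j+2)}$ is right convolution by $\phi_{-(j+2)} := \CF_{\bf k}(\overline\Psi\,\mu_0^{-(j+2)})$. For $j \geq 0$ one has $-(j+2)\le -2$, so $\mu_0^{-(j+2)} \in L^1({\bf k})$, and hence $\phi_{-(j+2)}$ is a continuous function supported in $U_n$ satisfying the trivial Fourier bound $\|\phi_{-(j+2)}\|_\infty \leq \|\mu_0^{-(j+2)}\|_1$; consequently, using ${\rm Vol}(U_n) = q^{-n}$,
$$\|\phi_{-(j+2)}\|_{L^1(U_n)} \leq q^{-n}\|\mu_0^{-(j+2)}\|_1.$$
Setting $H_i := J^{j+2} F_i \in L^\infty(X_n)$, the case $j=0$ is then immediate: $\|F_1 F_2\|_\infty \le \|F_1\|_\infty \|F_2\|_\infty \le \|\phi_{-2}\|_{L^1(U_n)}^2 \|H_1\|_\infty \|H_2\|_\infty \le q^{-2n}\|\mu_0^{-2}\|_1^2 \|J^2 F_1\|_\infty \|J^2 F_2\|_\infty$.

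For general $j \geq 1$, I would substitute $F_i = J^{-(j+2)} H_i$ into $J^j(F_1 F_2)$ and unravel the resulting iterated convolution to obtain a representation
$$J^j(F_1 F_2)(x) = \iint_{U_n \times U_n} K(v_1,v_2)\, H_1\bigl(x\cdot(v_1,[0])\bigr)\, H_2\bigl(x\cdot(v_2,[0])\bigr)\, dv_1\, dv_2,$$
with kernel $K(v_1,v_2) := \int_{U_n}\phi_j(u_0)\,\phi_{-(j+2)}(u_0^{-1}v_1)\,\phi_{-(j+2)}(u_0^{-1}v_2)\,du_0$. Integration of $K$ in either variable, combined with the semigroup identity $\phi_j *_{U_n}\phi_{-(j+2)} = \phi_{-2}$ of Lemma \ref{strange} and the Fourier-inversion identity $\int_{U_n}\phi_s = 1$, reduces the control of $\iint |K|$ to the factored kernel $\phi_{-2}\otimes\phi_{-2}$ of the $j=0$ case, yielding $\iint|K| \leq \|\phi_{-2}\|_{L^1(U_n)}^2 \leq q^{-2n}\|\mu_0^{-2}\|_1^2$. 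The estimate then follows by taking $L^\infty$ bounds on $H_1, H_2$.

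The main obstacle is the rigorous treatment of the triple convolution when $j \geq 1$, since $\phi_j$ is then only a distribution in $\CD'(U_n)$, so the interchange of integrations above is not a priori justified. I would address this by first proving \eqref{SNEST} for $F_1,F_2$ in the dense subspace $C_{ru}^\infty(X_n) \subset \CB(X_n)$ (density given by Proposition \ref{frechetfonct}), where all convolutions are classical, and then extending to arbitrary $F_1,F_2\in\CB(X_n)$ by continuity of $J^j$ and $J^{j+2}$ on $\CD'(X_n)$ from Proposition \ref{grp-conv}.
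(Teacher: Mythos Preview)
Your argument has a genuine gap at the central step for $j\geq 1$: the claimed bound $\iint_{U_n\times U_n}|K(v_1,v_2)|\,dv_1\,dv_2\leq\|\phi_{-2}\|_{L^1(U_n)}^2$ does not follow from the marginal identities you compute. Integrating $K$ in one variable and invoking Lemma \ref{strange} together with $\int_{U_n}\phi_{-(j+2)}=1$ indeed gives $\int K(v_1,v_2)\,dv_2=\phi_{-2}(v_1)$, but this controls $\bigl|\iint K\bigr|$, not $\iint|K|$; knowing the marginals of a signed kernel says nothing about its total variation. Worse, for $j\geq1$ the factor $\phi_j=\CF_\k(\overline\Psi\mu_0^j)$ is a genuine distribution (not a measure of finite variation), while $\phi_{-(j+2)}$ is merely continuous and not locally constant (since $\mu_0^{-(j+2)}$ is not compactly supported), so the pairing defining $K$ is not even a priori meaningful pointwise in $(v_1,v_2)$. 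Passing to $F_1,F_2\in C_{ru}^\infty(X_n)$ does not help: that regularises the $H_i$, not the convolution kernel, and cannot manufacture an $L^1$-bound that is simply absent. Your argument is clean only at $j=0$, where $\phi_0=\delta_1$ and $K$ factors as $\phi_{-2}\otimes\phi_{-2}$.

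The paper's proof circumvents this obstruction by a different mechanism. Rather than staying on $U_n$, it unfolds the Fourier transform defining $\phi_{-(j+2)}$ to write
\[
J^{-j-2}F_k(u_0,[t_0])=\int_{U_n\times\k}\Psi(u^{-1}u_0 t)\,\overline\Psi(t)\,\mu_0^{-j-2}(t)\,J^{j+2}F_k\big(u,[u^{-1}u_0 t_0]\big)\,du\,dt,
\]
so that $F_1F_2$ becomes an absolutely convergent integral over $(U_n\times\k)^2$ in which the only $(u_0,[t_0])$-dependence not of the form $h([vu_0t_0])$ is the plane wave $\widehat\Psi_{u_1^{-1}t_1+u_2^{-1}t_2}$. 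Functions of the form $(u_0,[t_0])\mapsto h([vu_0t_0])$ are right $U_n$-invariant, hence fixed by $J^j$, so $J^j$ acts only on the plane wave; Lemma \ref{eigenPsi} then yields the multiplier $\mu_0^j(u_1^{-1}t_1+u_2^{-1}t_2)$. The Peetre inequality \eqref{Peetre} and $\CO_\k^\times$-invariance of $\mu_0$ give $\mu_0^j(u_1^{-1}t_1+u_2^{-1}t_2)\leq\mu_0^j(t_1)\,\mu_0^j(t_2)$, which combines with the existing $\mu_0^{-j-2}$ weights to produce the $j$-independent integrand $\mu_0^{-2}(t_1)\,\mu_0^{-2}(t_2)$. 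The two ingredients your proposal bypasses---Lemma \ref{eigenPsi} and the Peetre inequality---are exactly what substitutes for the unavailable total-variation control of your kernel $K$.
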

\begin{proof}

Let $j \in\N$, $F_1,  F_2 \in \CB(X_n)$ and $(u_0,[t_0]) \in X_n$. Since the function $\CF_{\k}
(\overline{\Psi_1} \, \mu_0^{-j-2})\in L^1(U_n)$ is invariant under group inversion (by Lemma \ref{corsigmaphi}),
 we get:
\begin{align*}
J^{-j-2}F_k(u_0,[t_0])  
&= \int_{U_n }\CF_{{\bf k}}\big(\overline{\Psi}\mu_0^{-j-2}\big)(u)\,F_k(u_0u,[u^{-1}t_0])\,du\\
&=\int_{U_n }\CF_{{\bf k}}\big(\overline{\Psi}\mu_0^{-j-2}\big)(u^{-1}u_0)\,
F_k(u,[u^{-1}u_0t_0])du\\
&=\int_{U_n \times\k}\Psi(u^{-1}u_0t) \, \overline\Psi(t)\,\mu_0^{-j-2}(t)\,F_k(u,[u^{-1}u_0t_0])\,du\,dt.
\end{align*}
Writing $F_1=J^{-j-2} J^{j+2}F_1$,  $F_2=J^{-j-2} J^{j+2}F_2$, it then follows that 
\begin{align*}
 F_1 \, F_2(u_0,[t_0])  
 =  \int_{(U_n \times \k)^2}  &  \Psi\big( u_0 (u_1^{-1}t_1 + u_2^{-1}t_2 ) \big) \,  
 \overline{\Psi}(t_1+t_2) \,  \mu_0^{-j-2}(t_1) \, \mu_0^{-j-2}(t_2)  \\ 
 & \times J^{j+2}F_1(u_1,[u_1^{-1}u_0t_0])  \,  J^{j+2}F_2(u_2,[u_2^{-1}u_0t_0])  \, 
 du_1dt_1du_2dt_2.
\end{align*}

To compute  $J^j(F_1F_2)$ we let $J^j$ acts on this absolutely convergent 
(in $L^\infty(X_n)$) integral 
representation of the product $F_1F_2$.  Hence, we are left to compute
$$
J^j\Big[(u_0,[t_0])\mapsto \Psi\big( u_0 (u_1^{-1}t_1 + u_2^{-1}t_2 ) \big)\,
J^{j+2}F_1(u_1,[u_1^{-1}u_0t_0])\,J^{j+2}F_2(u_2,[u_2^{-1}u_0t_0])\Big].
$$
Now observe that an element  $F\in L^\infty(X_n)$  of the form $F(u_0,[t_0])=h([uu_0t_0])$, 
with $h\in\ell^\infty(\Gamma_n)$
and $u\in\CO_\k^\times$,  is invariant under right translations in  $\{(u,[0]),\;u\in U_n\}\subset X_n$.
Hence,  we get
\begin{align*}
J^j\big( F_1 \, F_2\big)(u_0,[t_0])  
 =  \int_{(U_n \times \k)^2}  & J^j\big( \widehat\Psi_{u_1^{-1}t_1 + u_2^{-1}t_2}\big) (u_0,[t_0]) \,  
 \overline{\Psi}(t_1+ t_2) \,  \mu_0^{-j-2}(t_1) \, \mu_0^{-j-2}(t_2)  \\ 
 & \times J^{j+2}F_1(u_1,[u_1^{-1}u_0t_0])  \,  J^{j+2}F_2(u_2,[u_2^{-1}u_0t_0])  \, 
 du_1dt_1du_2dt_2,
\end{align*}
which by Lemma \ref{eigenPsi} entails
\begin{align*}
J^j\big( F_1 \, F_2\big)(u_0,[t_0])  
&=  \int_{(U_n \times \k)^2}  \mu_0^j\big(u_0(u_1^{-1}t_1 + u_2^{-1}t_2)\big)\,
\Psi\big(u_1^{-1}t_1 + u_2^{-1}t_2\big) \,  
 \overline{\Psi}(t_1+t_2)\,\mu_0^{-j-2}(t_1)  \\ 
 &\qquad \times   \mu_0^{-j-2}(t_2) \,J^{j+2}F_1(u_1,[u_1^{-1}u_0t_0]_0)  \,  
 J^{j+2}F_2(u_2,[u_2^{-1}u_0t_0])  \, 
 du_1dt_1du_2dt_2.
\end{align*}
This formula then yields the following estimate 
\begin{align*}
&\big|J^j\big( F_1 \, F_2\big)\big|(u_0,[t_0])\\
&\quad  \leq\|J^{j+2}F_1\|_\infty \|J^{j+2}F_2\|_\infty
  \int_{(U_n \times \k)^2}&  \mu_0^j\big(u_0(u_1^{-1}t_1 + u_2^{-1}t_2)\big)\,
  \mu_0^{-j-2}(t_1) \, \mu_0^{-j-2}(t_2)  
 du_1dt_1du_2dt_2.
\end{align*}
Using the Peetre inequality  \eqref{Peetre} and the invariance of  $\mu_0$ by dilations in 
$\CO_\k^\times$, we get
$$
\mu_0^j\big(u_0(u_1^{-1}t_1 + u_2^{-1}t_2)\big) \leq \mu_0^j(u_0u_1^{-1}t_1)\,
\mu_0^j(u_0u_2^{-1}t_2)=\mu_0^j(t_1)\,
\mu_0^j(t_2).
$$
With our choice of normalization for Haar measures, we have ${\rm Vol}(U_n)=q^{-n}$
and thus
\begin{align*}
\|J^j (F_1 \, F_2 )\|_\infty & \, \leq  \, q^{-2n} \, ||\mu_0^{-2}||_1^2 \,\|J^{j+2}F_1\|_\infty
 \|J^{j+2}F_2\|_\infty,
\end{align*}
which is the inequality we had to prove.
\end{proof}

Even though we shall not need this space here (we will use it in \cite{GJ3} only),  we conclude this paragraph
by constructing a more suitable (for us) version of the Schwartz space  on the group $X_n$. The idea is to control
 regularity  using the operator $J$ and    decay using the 
multiplication operator:
$$
I\vf(u,[t]):=\mu_0(t)\,\vf(u,[t]).
$$
(Recall that $\mu_0$ is invariant under translations in $\varpi^{-n}\CO_\k$ so that it defines a 
function on $\Gamma_n$.)
With domain $\CD(X_n)\subset L^2(X_n)$, the operator $I$ is unbounded, essentially self-adjoint and non-negative.
 Since moreover $\mu_0(t)=\mu_0(ut)$ for all $u\in U_n$, one sees that $I$
commutes with $J$. We then let
$$
\CS(X_n):= \Big \{  f \in \CB(X_n) : \forall j  \in\N, \, I^jf \in \CB(X_n) \Big \},
$$
and we endow $\CS(X_n)$ with the topology associated with the family of seminorms:
\begin{equation}
\label{ss}
f\mapsto\|J^kI^j f\|_\infty, \;\;\; \forall j,k \in\N. 
\end{equation}
From standard methods, one can prove
that, endowed with this topology, $\CS(X_n)$ is Fr\'echet and nuclear. Since
moreover $I^j(fF)=(I^jf)F$,  for $f\in\CS(X_n)$ and $F\in\CB(X_n)$ we also get from 
\eqref{SNEST} that
$$
\|J^kI^j(fF)\|_\infty\leq  q^{-2n}\|\mu_0^{-2}\|_1^2 \, \|J^{k+2}I^jf\|_\infty\,
\|J^{k+2}F\|_\infty.
$$
Hence $\CS(X_n)$ is an ideal of $\CB(X_n)$ (for the point-wise product).

\subsection{The main estimate}
The  goal of this part is to provide an analogue of the Calder\'on-Vaillancourt Theorem
for the $p$-adic Fuchs calculus. 
Namely, we will prove 
 that the quantization map 
$\bO_\theta:\CD'(X_n)\to\CL\big(\CD(U_n),\CD'(U_n)\big)$,  restricts to $\CB(X_n)$   as a bounded operator on $L^2(U_n)$.  
The method we use relies on  coherent states and Wigner functions, a method which has been discovered  by Unterberger in the eighties \cite{Unold} in the context of the Weyl calculus. 
To this aim, let us consider the following specific Wigner functions:
\begin{align}
\label{WS}
W_{g}^{\theta}    := W_{\mathds{1}_{U_n}, \, \pi_{\theta}(g)\mathds{1}_{U_n} }\in L^2(X_n), \quad g\in G_n,
\end{align}
where $\mathds{1}_{U_n}$ is the characteristic function of $U_n$.
 From \eqref{wig1} and \eqref{Utheta}
we explicitly get with $g_1=(u_1,t_1)\in G_n$ and $[g_2]=(u_2,[t_2])\in X_n$:
$$
W_{g_1}^{\theta}([g_2])=\int_{U_n} \Psi_\theta\big(\phi(u_2u_0^{-1})t_2\big)\,
\Psi_\theta\big(u_0u_2^{-2}u_1t_1\big)\,du_0
=\int_{U_n} 
\Psi_\theta\big(u_0u_2^{-1}u_1t_1-\phi(u_0)t_2\big)\,du_0.
$$
Since $\mathds{1}_{U_n}\in\CD(U_n)$, we may use Proposition \ref{wignerbruhat}  to get that  
  $W_{g}^{\theta} \in \CD(X_n)$.  
Hence, we can act on $W_{g}^{\theta} $ by the operator $J^s$:
\begin{lem}
\label{intmu}
Let $\theta\in\CO_\k^\times$,
$s<-1/2$, $u_1,u_2\in U_n$ and $t_1,t_2\in \k$.
Then, for $g_1=(u_1,t_1) \in G_n$ and  $[g_2]=(u_2,[t_2]) \in X_n$, we have 
$$
J^{s}W_{g_1}^{\theta}([g_2]) =    \int_{U_n  } 
 \mu_0^{s}\big(u_0u_2^{-1}u_1t_1 -\phi(u_0)t_2 \big)   \,   
 \Psi_\theta\big(u_0u_2^{-1}u_1t_1-\phi(u_0)t_2\big)\,du_0. 
 $$
\end{lem}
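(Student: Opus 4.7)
The plan is to use the explicit integral representation of $W_{g_1}^{\theta}$ displayed just before the lemma, then bring $J^s$ inside that integral and reduce the computation to an essentially one-variable Fourier calculation, very much in the spirit of the proof of Lemma~\ref{eigenPsi}.

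Concretely, write $\beta_{u_0}(u_2,[t_2]) := u_0 u_2^{-1} u_1 t_1 - \phi(u_0)t_2$, so that the formula preceding the lemma reads $W_{g_1}^{\theta}(u_2,[t_2]) = \int_{U_n} \Psi_\theta(\beta_{u_0}(u_2,[t_2]))\,du_0$. Since $s<-1/2$, $\mu_0^{s} \in L^{2}(\k)$, so the kernel $K := \CF_{\k}(\overline{\Psi}\mu_0^{s})$ lies in $L^{1}(\k)\cap L^{2}(\k)$ and is supported on $U_n$. In particular the operator $J^{s}$ is given by honest integration of a compactly supported $L^{1}$-kernel, and Fubini lets me interchange it with the $u_0$-integral:
\[
J^{s}W_{g_1}^{\theta}(u_2,[t_2]) \;=\; \int_{U_n} \bigl(J^{s}h_{u_0}\bigr)(u_2,[t_2])\,du_0, \qquad h_{u_0} := \Psi_\theta\circ \beta_{u_0}.
\]

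For fixed $u_0$, the calculation of $J^{s}h_{u_0}$ is where the algebraic structure of $X_n$ does the work. Using the group law $(u_2,[t_2])(u,[0])=(u_2 u,[u^{-1}t_2])$, both occurrences of $u_2^{-1}$ and $t_2$ in $\beta_{u_0}$ pick up the same factor $u^{-1}$, so
\[
h_{u_0}(u_2 u,[u^{-1}t_2]) \;=\; \Psi\bigl(u^{-1}\,\theta\beta_{u_0}(u_2,[t_2])\bigr).
\]
Setting $\alpha := \theta\beta_{u_0}(u_2,[t_2])$, this gives $J^{s}h_{u_0}(u_2,[t_2]) = \int_{U_n} K(u)\,\Psi(u^{-1}\alpha)\,du$. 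Using the inversion invariance of $K$ proved in Lemma~\ref{corsigmaphi}, together with unimodularity of $U_n$ to change variables $u\mapsto u^{-1}$, this equals $\int_{U_n} K(u)\,\Psi(u\alpha)\,du$. Since $K$ is supported on $U_n$, I may extend the integration to $\k$, and Fourier inversion (together with evenness of $\mu_0$) yields
\[
\int_{\k} K(u)\,\Psi(u\alpha)\,du \;=\; \CF_{\k}(K)(\alpha) \;=\; \Psi(\alpha)\,\mu_0^{s}(\alpha).
\]

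The final step is purely cosmetic: since $\theta\in\CO_\k^{\times}$, the invariance of $\mu_0$ under dilations by units gives $\mu_0^{s}(\alpha) = \mu_0^{s}(\theta\beta_{u_0}) = \mu_0^{s}(\beta_{u_0})$, while $\Psi(\alpha) = \Psi_\theta(\beta_{u_0})$ by definition. Substituting back into the $u_0$-integral produces exactly the stated formula. The main potential obstacle is just the Fubini justification in the first step, but the hypothesis $s<-1/2$ is precisely what turns $K$ into an $L^1$ function supported on the compact set $U_n$, which makes the exchange of integrals routine.
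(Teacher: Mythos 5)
Your proof is correct and follows essentially the same route as the paper's: expand $J^s$ as an integral against the kernel $\CF_\k(\overline\Psi\mu_0^s)$, use the group law of $X_n$ to extract a uniform dilation factor, invoke the inversion invariance of the kernel from Lemma~\ref{corsigmaphi} to flip $u\mapsto u^{-1}$, extend the $U_n$-integral to $\k$, and finish by Fourier inversion together with the $\CO_\k^\times$-dilation invariance and evenness of $\mu_0$. The only cosmetic difference is that you interchange $J^s$ with the $u_0$-integral first and treat each $h_{u_0}$ separately (explicitly flagging the Fubini step), whereas the paper writes the double integral directly and computes the inner $u_3$-integral; the two are the same computation.
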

\begin{proof}
Note that since $s<-1/2$,      $\CF_\k\big(\overline\Psi \mu_0^s\big)\in L^2(\k)\cap L^1(\k)$.
By definition, we have:
\begin{align}
\label{label}
J^{s}W_{g_1}^{\theta}([g_2])&=\int_{U_n}\CF_\k\big(\overline\Psi \mu_0^s\big)(u_3)\,
W_{g_1}^{\theta}((u_2,[t_2]),(u_3,[0]))\,du_3\nonumber\\
&=\int_{U_n}\CF_\k\big(\overline\Psi \mu_0^s\big)(u_3)\,
W_{g_1}^{\theta}(u_2u_3,[u_3^{-1}t_2])\,du_3\nonumber\\
&=\int_{U_n\times U_n}\CF_\k\big(\overline\Psi \mu_0^s\big)(u_3)\,
\Psi_\theta\big(u_3^{-1}(u_0u_2^{-1}u_1t_1-\phi(u_0)t_2)\big)\,du_0\,du_3\nonumber\\
&=\int_{U_n\times U_n}\CF_\k\big(\overline\Psi \mu_0^s\big)(u_3)\,
\Psi_\theta\big(u_3(u_0u_2^{-1}u_1t_1-\phi(u_0)t_2)\big)\,du_0\,du_3,
\end{align}
where  the last equality follows from the invariance of $\CF_\k\big(\overline\Psi \mu_0^s\big)$
under group inversion (see Lemma \ref{corsigmaphi}).
Now, if we view $\CF_\k\big(\overline\Psi \mu_0^s\big)$ as an $L^2$-function on $\k$
supported on $U_n$, we get:
\begin{align*}
&\int_{U_n}\!\CF_\k\big(\overline\Psi \mu_0^s\big)(u_3)\,
\Psi_\theta\big(u_3(u_0u_2^{-1}u_1t_1-\phi(u_0)t_2)\big)\,du_3
=\int_\k\!\CF_\k\big(\overline\Psi \mu_0^s\big)(t_3)\,
\Psi_\theta\big(t_3(u_0u_2^{-1}u_1t_1-\phi(u_0)t_2)\big)\,dt_3\\
&=\CF_\k^{-1}\CF_\k\big(\overline\Psi \mu_0^s\big)
\big(-\theta(u_0u_2^{-1}u_1t_1-\phi(u_0)t_2)\big)=
\Psi_\theta\big(u_0u_2^{-1}u_1t_1-\phi(u_0)t_2\big)\,
 \mu_0^s\big(u_0u_2^{-1}u_1t_1-\phi(u_0)t_2\big),
\end{align*}
 by invariance of $\mu_0$ by dilations in $\CO_\k^\times$. This concludes the proof.
\end{proof}

\begin{lem}
\label{supp}
Let $\theta\in\CO_\k^\times$,
 $s<-1/2$, $g_1=(u_1,t_1)\in G_n$ and $u_2\in U_n$. Then the map 
$\Gamma_n\to\C$, $[t_2] \mapsto J^{s}W_{g_1}^{\theta}  (u_2,[t_2]) $ 
is supported in the finite set $ \varpi^{\min(-n,{\rm val}(t_1) )}\CO_{\bf k}/\varpi^{-n}\CO_{\k}$.
\end{lem}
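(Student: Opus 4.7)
The plan is to start from the integral representation of $J^sW_{g_1}^\theta(u_2,[t_2])$ provided by Lemma \ref{intmu} and perform the substitution $z := \phi(u_0)$, which by Proposition \ref{sigma} is a $C^1$-homeomorphism $U_n \to \varpi^n\CO_\k$ with $|\phi'|_\k = 1$. Solving $\phi(u_0)=z$ gives $u_0 = z/2 + c(z)$ with $c(z) := (u_0+u_0^{-1})/2 = \sqrt{1+z^2/4} \in 1+\varpi^{2n}\CO_\k$, so that, writing $a := u_2^{-1}u_1 t_1$, the argument $u_0 a - \phi(u_0)t_2$ rewrites as $ac(z) + z(a/2 - t_2)$ and Lemma \ref{intmu} becomes
\begin{equation*}
J^sW_{g_1}^\theta(u_2,[t_2]) = \int_{\varpi^n\CO_\k} [\mu_0^s \Psi_\theta]\bigl(ac(z) + z(a/2-t_2)\bigr)\, dz.
\end{equation*}

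Assume now that $[t_2]$ lies outside the claimed support set, i.e.\ $|t_2|_\k > \max(q^n,|t_1|_\k) = \max(q^n,|a|_\k)$. Then $|a/2 - t_2|_\k = |t_2|_\k$, and an immediate check in the sub-cases $|a|_\k \le q^n$ and $|a|_\k > q^n$ gives $|ac'(z)|_\k \le |a|_\k q^{-n} < |t_2|_\k$. Therefore $z \mapsto u(z) := ac(z) + z(a/2-t_2)$ has $|u'(z)|_\k = |t_2|_\k$ uniformly on $\varpi^n\CO_\k$, and the same ultrametric domination of $(c(z)-1)a$ by $z(a/2 - t_2)$ yields $|u(z_1)-u(z_2)|_\k = |z_1-z_2|_\k\,|t_2|_\k$. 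Hence $u$ is a $C^1$-homeomorphism of $\varpi^n\CO_\k$ onto the ball $a + \varpi^{n+{\rm val}(t_2)}\CO_\k$ (surjectivity follows by comparing volumes via \eqref{change}). The substitution formula then gives
\begin{equation*}
J^sW_{g_1}^\theta(u_2,[t_2]) = |t_2|_\k^{-1}\int_{a+B}\mu_0^s(u)\Psi_\theta(u)\,du, \qquad B := \varpi^{n+{\rm val}(t_2)}\CO_\k.
\end{equation*}

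It remains to verify that this integral vanishes by character orthogonality. When $a \notin B$, every $u \in a+B$ has $|u|_\k = |a|_\k$, so $\mu_0^s$ factors out and one is reduced to $\Psi_\theta(a)\int_B\Psi_\theta$. When $a \in B$ and $|t_2|_\k \le q^{2n}$, every $u \in B$ has $|u|_\k < q^n$, so $\mu_0^s \equiv 1$ on $B$ and the integral again becomes $\int_B\Psi_\theta$. When $a \in B$ and $|t_2|_\k > q^{2n}$, one has $B \supset \varpi^{-n}\CO_\k$, and decomposing into $\varpi^{-n}\CO_\k$-cosets --- on each of which $\mu_0^s$ is constant by its translation invariance --- reduces the computation to $\int_{\varpi^{-n}\CO_\k}\Psi_\theta$. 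In every sub-case the vanishing follows from $\int_H\Psi_\theta(v)\,dv = 0$ for $H = B$ or $H = \varpi^{-n}\CO_\k$, the annihilators $\varpi^{-n-{\rm val}(t_2)}\CO_\k$ and $\varpi^n\CO_\k$ containing no units. The main technical point is the identification of the image of $u$ with the full ball: the strict ultrametric domination of $(c(z)-1)a$ by $z(a/2-t_2)$ --- guaranteed precisely by the hypothesis $|t_2|_\k > \max(q^n,|t_1|_\k)$ --- is what makes $u$ a rescaled isometry and hence a $C^1$-homeomorphism onto the target ball.
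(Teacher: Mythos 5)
Your proof is correct, but it takes a genuinely different route from the paper's. Starting from Lemma~\ref{intmu} and substituting $z=\phi(u_0)$, the paper recognizes the inner integral as a Fourier transform $\CF_\k\big(\chi_{\varpi^n\CO_\k}\,\Psi_{\theta u_3u_2^{-1}u_1t_1}\circ\phi^{-1}\big)(u_3t_2)$, checks (using that $\phi$ is an isometry) that the test function is invariant under translations in $\varpi^{\max(n,-{\rm val}(t_1))}\CO_\k$, and concludes by the standard invariance--support duality that its Fourier transform is supported in $\varpi^{\min(-n,{\rm val}(t_1))}\CO_\k$. Your argument bypasses Fourier duality entirely: you exploit the algebraic identity $u_0=z/2+c(z)$ with $c(z)\in U_{2n}$ to show that, precisely when $|t_2|_\k>\max(q^n,|t_1|_\k)$, the phase map $z\mapsto ac(z)+z(a/2-t_2)$ becomes a scaled isometry of $\varpi^n\CO_\k$ onto the ball $a+\varpi^{n+{\rm val}(t_2)}\CO_\k$ (the strict ultrametric domination $|ac'|_\k\le|a|_\k q^{-n}<|t_2|_\k$ is the crux), and then you kill the resulting integral by character orthogonality after disposing of the $\mu_0^s$-weight in each of three sub-cases. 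What your approach buys is geometric transparency about \emph{why} the vanishing occurs (the integration sweeps a full additive coset on which $\Psi_\theta$ oscillates nontrivially), at the cost of some extra case analysis and a surjectivity step that deserves to be stated slightly more carefully: the map $u(z)=ac(z)+z(a/2-t_2)$ maps $\varpi^n\CO_\k$ \emph{onto} the ball because, after rescaling by $t_2^{-1}$ and translating, it becomes an isometry of a compact metric space into itself, hence surjective --- the same compactness argument used for Proposition~\ref{sigma}; invoking a volume comparison ``via~\eqref{change}'' is fine but implicitly relies on the map already being a homeomorphism onto an open set, which is what you are trying to establish. The paper's route is shorter and more formal; yours is essentially an explicit stationary-phase type computation. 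Both are valid and both ultimately rest on $\phi$ and $\phi^{-1}$ being isometries.
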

\begin{proof}
Using the substitution formula \eqref{change}, we deduce from \eqref{label}:
\begin{align*}
J^{s}W_{g_1}^{\theta}([g_2])&=
\int_{\varpi^n\CO_\k\times U_n}\CF_\k\big(\overline\Psi \mu_0^s\big)(u_3)\,
\Psi_\theta\big(u_3(\phi^{-1}(u_0)u_2^{-1}u_1t_1-u_0t_2)\big)\,du_0\,du_3\\
&=\int_{U_n}\CF_\k\big(\overline\Psi \mu_0^s\big)(u_3)\,
\CF_\k\big(\mathds{1}_{\varpi^n\CO_\k}
\Psi_{\theta u_3u_2^{-1}u_1t_1}\circ\phi^{-1}\big)(u_3t_2)\,du_3.
\end{align*}
Observe first that $\Psi_{\theta u_3u_2^{-1}u_1t_1}\circ\phi^{-1}$ is constant on the cosets
of $\varpi^{-{\rm val}(t_1)}\CO_\k$. Indeed, for $x\in\CO_\k$ and $t\in\k$, we have
since $\phi$ is an isometry:
\begin{align*}
\big|u_3u_2^{-1}u_1t_1\phi^{-1}\big(t+\varpi^{-{\rm val}(t_1)}x\big)
- u_3u_2^{-1}u_1t_1\phi^{-1}(t)\big|_\k&=|t_1|_\k\,\big|\phi^{-1}\big(t+\varpi^{-{\rm val}(t_1)}x\big)
-\phi^{-1}(t)\big|_\k\\&=|t_1|_\k\, |\varpi^{-{\rm val}(t_1)}|_\k\,|x|_\k=|x|_\k\leq 1,
\end{align*}
which prove this first claim since $\Psi_\theta$ is constant in $\CO_\k$.
Hence, $\mathds{1}_{\varpi^n\CO_\k}\Psi_{\theta u_3u_2^{-1}u_1t_1}\circ\phi^{-1}$ is invariant under
translations in $ \varpi^{\max(n,-{\rm val}(t_1) )}\CO_{\bf k}$ which implies that
$\CF_\k\big(\mathds{1}_{\varpi^n\CO_\k}
\Psi_{\theta u_3u_2^{-1}u_1t_1}\circ\phi^{-1}\big)$ is supported on  
$\varpi^{-\max(n,-{\rm val}(t_1) )}\CO_{\bf k}$ and, since $u_3$ is a unit, so is the map
$t_2\mapsto\CF_\k\big(\mathds{1}_{\varpi^n\CO_\k}
\Psi_{\theta u_3u_2^{-1}u_1t_1}\circ\phi^{-1}\big)(u_3t_2)$.
\end{proof}

The previous Lemmas allow to prove a crucial property of the Wigner functions 
$W_{g}^{\theta}$, $g\in G_n$:

\begin{prop}  
\label{maj} 
Let $\theta\in\CO_\k^\times$ and $s<-2$. Then,  
 $$  \int_{G_n \times X_n} \big | J^{s}W_{g_1}^{\theta}([g_2])   \big  | \, dg_1d[g_2]
 \leq q^{-2n}\big(1+q^{-n}\,\|
   \mu_0^{s+1}\|_1\big) < \infty.$$
\end{prop}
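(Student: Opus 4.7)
The plan is to combine Lemma \ref{intmu} with the support restriction of Lemma \ref{supp} and exploit the ultrametric nature of $\mu_0$. The starting point is the pointwise bound, obtained from Lemma \ref{intmu} and $|\Psi_\theta|=1$,
$$
|J^s W^\theta_{g_1}([g_2])|\leq \int_{U_n}\mu_0^s\big(u_0 u_2^{-1}u_1 t_1-\phi(u_0)t_2\big)\,du_0.
$$
My first move is to replace the sum over $[t_2]\in\Gamma_n$ by an integral on $\k$. Since $\phi(u_0)\in\varpi^n\CO_\k$, for $y\in\varpi^{-n}\CO_\k$ one has $\phi(u_0)y\in\CO_\k\subset\varpi^{-n}\CO_\k$, so $\mu_0$ (being invariant under translations in $\varpi^{-n}\CO_\k$) makes the right hand side $\varpi^{-n}\CO_\k$-periodic in $t_2$. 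Combining this periodicity with Lemma \ref{supp} and formula \eqref{intg} yields
$$
\sum_{[t_2]\in\Gamma_n}|J^s W^\theta_{g_1}(u_2,[t_2])|\leq q^{-n}\int_{S(t_1)}\int_{U_n}\mu_0^s\big(u_0 u_2^{-1}u_1 t_1-\phi(u_0)t_2\big)\,du_0\,dt_2,
$$
with $S(t_1):=\varpi^{\min(-n,\mathrm{val}(t_1))}\CO_\k$.

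Next I split the outer integration over $t_1\in\k$ into the two regimes dictated by $S(t_1)$. When $t_1\in\varpi^{-n}\CO_\k$, one has $S(t_1)=\varpi^{-n}\CO_\k$ and all four pieces $u_0u_2^{-1}u_1t_1$, $\phi(u_0)t_2$ lie in $\varpi^{-n}\CO_\k$, so $\mu_0^s=1$; a straightforward volume count using $\mathrm{Vol}(U_n)=q^{-n}$ and $\mathrm{Vol}(\varpi^{-n}\CO_\k)=q^n$ produces exactly the contribution $q^{-2n}$, which will be the ``$1$'' in the announced bound.

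When $|t_1|_\k>q^n$, I absorb the $u_1$ integration (factor $q^{-n}$) and use the change of variable $v:=u_0u_2^{-1}u_1$ in place of $u_2$ (unit Jacobian) to reduce the inner $t_2$-integral to
$\int_{\varpi^{\mathrm{val}(t_1)}\CO_\k}\mu_0^s(vt_1-\phi(u_0)t_2)\,dt_2.$
Here the key geometric observation is that the affine change of variable $w=vt_1-\phi(u_0)t_2$ makes $w$ range over a ball of radius $|\phi(u_0)t_1|_\k\leq q^{-n}|t_1|_\k<|t_1|_\k$ around $vt_1$; by the ultrametric inequality every such $w$ satisfies $|w|_\k=|t_1|_\k$, so $\mu_0^s(w)=(q^{-n}|t_1|_\k)^s$ is constant on the integration domain and the integral equals $q^{-ns}|t_1|_\k^{s+1}$. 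Trivially integrating the remaining $v,u_0\in U_n$, the contribution of $\{|t_1|_\k>q^n\}$ becomes $q^{-4n-ns}\int_{|t_1|_\k>q^n}|t_1|_\k^{s+1}\,dt_1$; using $\mu_0^{s+1}(t_1)=q^{-n(s+1)}|t_1|_\k^{s+1}$ on that region bounds this last integral by $q^{n(s+1)}\|\mu_0^{s+1}\|_1$, producing the contribution $q^{-3n}\|\mu_0^{s+1}\|_1=q^{-2n}\cdot q^{-n}\|\mu_0^{s+1}\|_1$. Summing the two cases gives the announced inequality, which is finite since $s+1<-1$ guarantees $\mu_0^{s+1}\in L^1(\k)$.

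The main obstacle—and the reason the support restriction of Lemma \ref{supp} is indispensable—is that a naive replacement of the $[t_2]$-sum by an integral over all of $\k$ would produce, after substituting $w=vt_1-\phi(u_0)t_2$, a factor $|\phi(u_0)|_\k^{-1}$ which is \emph{not} integrable over $U_n$ (under $\phi:U_n\to\varpi^n\CO_\k$ it becomes $\int_{\varpi^n\CO_\k}|x|_\k^{-1}dx=\infty$). Restricting $t_2$ to the compact set $S(t_1)$ and exploiting that $\mu_0^s$ is \emph{constant} on the corresponding $w$-ball (a specifically ultrametric phenomenon) is what cures this divergence and converts the $|\phi(u_0)|_\k^{-1}$ into the harmless factor $|t_1|_\k^{s+1}$ that generates the $\|\mu_0^{s+1}\|_1$ bound.
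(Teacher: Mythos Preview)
Your proof is correct and follows essentially the same approach as the paper's: both invoke Lemma~\ref{intmu} for the pointwise bound, use the ultrametric equality case to reduce $\mu_0^s$ of the difference to $\mu_0^s(t_1)$, and split into a ``volume'' piece contributing $q^{-2n}$ and a ``tail'' piece contributing $q^{-3n}\|\mu_0^{s+1}\|_1$, with Lemma~\ref{supp} controlling the $t_2$-range. The only organizational difference is that the paper splits the $(t_1,t_2)$-domain by the dichotomy $|t_2|_\k\le|t_1|_\k$ versus $|t_2|_\k>|t_1|_\k$ and invokes Lemma~\ref{supp} only on the second piece, whereas you apply Lemma~\ref{supp} upfront to confine $t_2$ to $S(t_1)$ and then split on $|t_1|_\k\le q^n$ versus $|t_1|_\k>q^n$; the resulting estimates and constants coincide.
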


\begin{proof}
Viewing $[g_2]\mapsto J^sW_{g_1}^{\theta}([g_2])$ as a function on 
$G_n$ constant on the cosets of $H_n=\{1\}\times \varpi^{-n}\CO_\k\subset G_n$, 
we first write
$$
 \int_{G_n \times X_n} \big | J^{s}W_{g_1}^{\theta}([g_2])   \big  | \, dg_1d[g_2] =
 q^{-n} \int_{G_n \times G_n} \big | J^{s}W_{g_1}^{\theta}([g_2])   \big  | \, dg_1dg_2.
$$
Next, we split the integration domain as the disjoint union of $V_1:=\{(g_1,g_2)\in G_n\times G_n\;:
\; |t_2|_\k\leq|t_1|_\k\}$ with $V_2:=\{(g_1,g_2)\in G_n\times G_n\;:
\; |t_2|_\k>|t_1|_\k\}$ and we denote by $I_1$ and $I_2$ the corresponding integrals.

First, by Lemma \ref{intmu}, we get
\begin{align*}
I_1&= q^{-n} \, \int_{V_1}  \Big|\int_{U_n  } 
 \mu_0^{s}\big(u_0u_2^{-1}u_1t_1 -\phi(u_0)t_2 \big)   \,   
 \Psi_\theta\big(u_0u_2^{-1}u_1t_1-\phi(u_0)t_2\big)\,du_0\Big|\,du_1du_2dt_1dt_2.
\end{align*}
Observe next that when $(g_1,g_2)\in V_1$ we have $|u_0u_2^{-1}u_1t_1|_\k=|t_1|_\k \geq |t_2|_\k>|\phi(u_0)t_2|_\k$ since $|\phi(u_0)|_\k\leq q^{-n}<1$. The case
of equality in the ultrametric triangle inequality and the invariance of $\mu_0$ under dilations in $\CO_\k$ therefore give: 
$$
\mu_0^{s}\big(u_0u_2^{-1}u_1t_1 -\phi(u_0)t_2 \big)=\mu_0^{s}\big(u_0u_2^{-1}u_1t_1\big)
=\mu_0^{s}(t_1).
$$
Hence, since $|t|_\k\leq q^n\mu_0(t)$, 
\begin{align*}
I_1&= q^{-n} \, \int_{V_1}  \Big|\int_{U_n  } 
 \mu_0^{s}(t_1)   \,   
 \Psi_\theta\big(u_0u_2^{-1}u_1t_1-\phi(u_0)t_2\big)\,du_0\Big|\,du_1du_2dt_1dt_2\\
 &\leq q^{-4n}  \int_\k \mu_0^{s}(t_1)\Big(\int_{\varpi^{{\rm val}(t_1)}\CO_\k}
   \,   dt_2\Big)dt_1
   =q^{-4n}  \int_\k \mu_0^{s}(t_1)\,|t_1|_\k\,dt_1\leq q^{-3n}\,\|
   \mu_0^{s+1}\|_1,
\end{align*}
which is finite since $s<-2$.

Next, if $(g_1,g_2)\in V_2$, we have $ |t_1|_\k< |t_2|_\k$. But by Lemma \ref{supp}
we also have $|t_2|_\k\leq \max(q^n,|t_1|_\k)$ when $J^s W_{g_1}^{\theta}([g_2])$ is possibly
nonzero. Hence $|t_2|_\k\leq q^n$ and thus $|t_1|_\k\leq q^n$ too and 
 we deduce that the domain of integration  of the variables $t_1$ and $t_2$ reduces
to $\varpi^{-n}\CO_\k$ when $J^s W_{g_1}^{\theta}([g_2])$ is possibly
nonzero. Thus
\begin{align*}
I_2&= q^{-n} \, \int_{V_2}  \Big|\int_{U_n  } 
 \mu_0^{s}\big(u_0u_2^{-1}u_1t_1 -\phi(u_0)t_2 \big)   \,   
 \Psi_\theta\big(u_0u_2^{-1}u_1t_1-\phi(u_0)t_2\big)\,du_0\Big|\,du_1du_2dt_1dt_2\\
 &\leq q^{-n} \, \int_{U_n\times U_n\times U_n\times
  \varpi^{-n}\CO_\k\times \varpi^{-n}\CO_\k}  
 \mu_0^{s}\big(u_0u_2^{-1}u_1t_1 -\phi(u_0)t_2 \big)   \,   
 du_0du_1du_2dt_1dt_2\\
& \leq q^{-n}{\rm Vol}(U_n)^3{\rm Vol}(\varpi^{-n}\CO_\k)^2=q^{-2n},
\end{align*}
where we used that $\mu_0\geq 1$ and $s<0$.
\end{proof}

 Recall that $\mathds{1}_{U_n}$  denotes the characteristic function of ${U_n}$.
 
\begin{cor}
\label{wignersup} 
Let $\theta\in\CO_\k^\times$ and $s<-2$. Then
for every $F \in \CB(X_n)$, we have:
$$
\sup_{g_1 \in G_n} \int_{G_n}  \big |  \big\langle \pi_{\theta}(g_1)\mathds{1}_{U_n}, \,\bO_{\theta}(F) 
\pi_{\theta}(g_2) \mathds{1}_{U_n} \big\rangle   \big |  \, dg_2 \leq q^{-n}\big(1+q^{-n}\,\|
   \mu_0^{s+1}\|_1\big)\,\|J^{-s}F\|_\infty<\infty.
$$
\end{cor}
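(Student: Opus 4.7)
The plan is to reduce the matrix-element estimate to Proposition \ref{maj} via three ingredients that are all already in place: the extension formula of Proposition \ref{extend}, the $G_n$-covariance of $\Omega_\theta$ from Lemma \ref{omegexp}, and the formal self-adjointness of $J^s$ together with its commutation with left translations.

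First, since $\chi_{U_n}\in\CD(U_n)$ and $\pi_\theta(g)\chi_{U_n}(u_0)=\Psi_\theta(u_0^{-1}ut)\chi_{U_n}(u_0)$ is locally constant on $U_n$, Proposition \ref{extend} gives
\[
\bigl\langle \pi_\theta(g_1)\chi_{U_n},\bO_\theta(F)\pi_\theta(g_2)\chi_{U_n}\bigr\rangle = |\theta|_\k\,q^n\, F\bigl[W^\theta_{\pi_\theta(g_1)\chi_{U_n},\,\pi_\theta(g_2)\chi_{U_n}}\bigr].
\]
Using $\pi_\theta(g_1)^*\Omega_\theta([g'])\pi_\theta(g_2)=\Omega_\theta([g_1^{-1}g'])\pi_\theta(g_1^{-1}g_2)$ (which follows at once from Lemma \ref{omegexp} together with the fact that $\pi_\theta$ is a genuine unitary representation of $G_n$), the Wigner function collapses into an object of the form \eqref{WS}:
\[
W^\theta_{\pi_\theta(g_1)\chi_{U_n},\,\pi_\theta(g_2)\chi_{U_n}}([g']) = W^\theta_{g_1^{-1}g_2}([g_1^{-1}g']) = \bigl(\lambda_{[g_1]}W^\theta_{g_1^{-1}g_2}\bigr)([g']).
\]

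Second, insert $F=J^s(J^{-s}F)$, which is legitimate by Proposition \ref{grp-conv}, and transfer $J^s$ to the test-function side via its formal self-adjointness (noted right after Proposition \ref{grp-conv}):
\[
F\bigl[\lambda_{[g_1]}W^\theta_{g_1^{-1}g_2}\bigr] = (J^{-s}F)\bigl[J^s\lambda_{[g_1]}W^\theta_{g_1^{-1}g_2}\bigr].
\]
Because $J^s$ is right-convolution on $X_n$, it commutes with the left regular representation $\lambda$, so $J^s\lambda_{[g_1]}W^\theta_{g_1^{-1}g_2}=\lambda_{[g_1]}(J^s W^\theta_{g_1^{-1}g_2})$. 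The membership $F\in\CB(X_n)$ forces $J^{-s}F\in L^\infty(X_n)$, while Proposition \ref{maj} gives $J^s W^\theta_{g_1^{-1}g_2}\in L^1(X_n)$. Combining everything and using $|\theta|_\k=1$ together with the left-invariance of the Haar measure on $X_n$,
\[
\bigl|\langle\pi_\theta(g_1)\chi_{U_n},\bO_\theta(F)\pi_\theta(g_2)\chi_{U_n}\rangle\bigr| \le q^n\,\|J^{-s}F\|_\infty\,\bigl\|J^s W^\theta_{g_1^{-1}g_2}\bigr\|_1.
\]

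Third and last, integrate over $g_2\in G_n$ and perform the change of variable $g_2\mapsto g_1 g_2$, which is legitimate by unimodularity of $G_n$. The bound is then independent of $g_1$, and the surviving double integral is precisely the one controlled by Proposition \ref{maj}:
\[
\int_{G_n}\bigl|\langle\cdots\rangle\bigr|\,dg_2 \le q^n\,\|J^{-s}F\|_\infty\int_{G_n\times X_n}\bigl|J^s W^\theta_{g}([g'])\bigr|\,dg\,d[g'] \le q^{-n}\bigl(1+q^{-n}\|\mu_0^{s+1}\|_1\bigr)\|J^{-s}F\|_\infty,
\]
which is exactly the claimed estimate. No serious obstacle is expected; the only delicate point is the careful orchestration of the covariance identity, the self-adjointness transfer of $J^s$, and the commutation $J^s\lambda=\lambda J^s$ so that Proposition \ref{maj} becomes directly applicable after one change of variable.
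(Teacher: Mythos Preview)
Your proof is correct and follows essentially the same route as the paper's own argument: Proposition~\ref{extend} to write the matrix element as a pairing with a Wigner function, $G_n$-covariance to reduce to $W^\theta_{g_1^{-1}g_2}$, self-adjointness of $J^s$ together with its commutation with $\lambda$ to transfer the regularity to $F$, and then Proposition~\ref{maj} after the change of variable $g_2\mapsto g_1g_2$. The only cosmetic difference is that you keep the left translation on the Wigner-function side while the paper moves it to $F$; the resulting bound is identical.
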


\begin{proof}
Since $\pi_{\theta}(g)\mathds{1}_{U_n}\in\CD(U_n)$ for all $g\in G_n$,  the function
$(g_1,g_2)\mapsto\big\langle \pi_{\theta}(g_1)\mathds{1}_{U_n}, \,\bO_{\theta}(F) 
\pi_{\theta}(g_2) \mathds{1}_{U_n} \big\rangle$ is well defined and by Proposition \ref{extend} we have
$$
\big\langle \pi_{\theta}(g_1)\mathds{1}_{U_n}, \,\bO_{\theta}(F) 
\pi_{\theta}(g_2) \mathds{1}_{U_n} \big\rangle= q^n
\int_{X_n} F([g])\,W_{\pi_{\theta}(g_1) \mathds{1}_{U_n}, \, \pi_{\theta}(g_2) \mathds{1}_{U_n}}^{\theta}([g])\,d[g].
$$
Using  $G_n$-covariance, we  deduce
\begin{align*}
W_{\pi_{\theta}(g_1) \mathds{1}_{U_n}, \, \pi_{\theta}(g_2) \mathds{1}_{U_n}}^{\theta}([g])&=
\big\langle \pi_{\theta}(g_1) \mathds{1}_{U_n},\Omega_\theta([g]) \pi_{\theta}(g_2) \mathds{1}_{U_n}\big\rangle
=
\big\langle \mathds{1}_{U_n},\pi_{\theta}(g_1^{-1})\Omega_\theta([g]) \pi_{\theta}(g_2) \mathds{1}_{U_n}\big\rangle\\
&=
\big\langle \mathds{1}_{U_n},\Omega_\theta([g_1^{-1}g]) \pi_{\theta}(g_1^{-1}g_2) \mathds{1}_{U_n}\big\rangle
=W^\theta_{g_1^{-1}g_2}([g_1^{-1}g]).
\end{align*}
Thus,
\begin{align*}
\big\langle \pi_{\theta}(g_1)\mathds{1}_{U_n}, \,\bO_{\theta}(F) 
\pi_{\theta}(g_2) \mathds{1}_{U_n} \big\rangle&=q^n
\int_{X_n} F([g])\,W^\theta_{g_1^{-1}g_2}([g_1^{-1}g])\,d[g]\\&
=q^n\int_{X_n} \big(\lambda_{[g_1]^{-1}}F\big)([g])\,W^\theta_{g_1^{-1}g_2}([g])\,d[g].
\end{align*}
Take then $s<-2$. Since $J^{-s}$ commutes with left translations on the group $X_n$, we therefore have
\begin{align}
\label{formula}
\big\langle \pi_{\theta}(g_1)\mathds{1}_{U_n}, \,\bO_{\theta}(F) 
\pi_{\theta}(g_2) \mathds{1}_{U_n} \big\rangle
=q^n\int_{X_n} \big(\lambda_{[g_1]^{-1}}J^{-s}F\big)([g])\,\big(J^s W^\theta_{g_1^{-1}g_2}\big)
([g])\,d[g].
\end{align}
This implies that
\begin{align*}
&\sup_{g_1 \in G_n} \int_{G_n}  \big |  \big\langle \pi_{\theta}(g_1)\mathds{1}_{U_n}, \,\bO_{\theta}(F) 
\pi_{\theta}(g_2) \mathds{1}_{U_n} \big\rangle   \big |  \, dg_2\\
&\qquad\qquad=q^n
\sup_{g_1 \in G_n} \int_{G_n}  \Big |  \int_{X_n} \big(\lambda_{[g_1]^{-1}}J^{-s}F\big)([g])\,\big(J^
s W^\theta_{g_1^{-1}g_2}\big) ([g])\,d[g]\Big |  \, dg_2\\
&\qquad\qquad=q^n\sup_{g_1 \in G_n}\int_{G_n}  \Big |  \int_{X_n} \big(\lambda_{[g_1]^{-1}}
J^{-s}F\big)([g])\,
\big(J^s W^
\theta_{g_2}\big)([g])\,d[g] \Big |  \, dg_2\\
&\qquad\qquad\leq q^n  \sup_{g_1 \in G_n}\|\lambda_{[g_1]^{-1}}J^{-s}F\|_\infty
\int_{X_n\times G_n} \big|\big(J^s W^\theta_{g_2}\big)([g])\big|\,d[g]    dg_2\\
&\qquad\qquad\quad=q^n\|J^{-s}F\|_\infty
\int_{X_n\times G_n} \big|\big(J^s W^\theta_{g_2}\big)([g])\big|\,d[g]    dg_2,
\end{align*}
and the claim follows by Proposition \ref{maj}.
\end{proof}

We are now  ready to state our version of the Calder\'on-Vaillancourt inequality  for the
 $p$-adic Fuchs calculus. This is an immediate application of the Schur test Lemma in the context
of square integrable irreducible unitary representations,  together with the fundamental inequality 
given in Corollary \ref{wignersup}. Indeed, for $\vf,\psi\in\CD(U_n)\subset L^2(U_n)$, using twice the resolution of the identity
given in Corollary \ref{ident} for the mother wavelet $\mathds{1}_{U_n}$ (the characteristic function of $U_n$), we have:
\begin{align*}
\big\langle\psi,\bO_\theta(F)\vf\big\rangle&=\|\mathds{1}_{U_n}\|_2^{-4}\int_{G_n\times G_n}\langle\psi,\pi_{\theta}(g_1)\mathds{1}_{U_n}\rangle 
\langle \pi_{\theta}(g_1)\mathds{1}_{U_n}, \,\bO_{\theta}(F)
\pi_{\theta}(g_2) \mathds{1}_{U_n} \big\rangle  \langle\pi_{\theta}(g_2)\mathds{1}_{U_n},\psi\rangle\,dg_1dg_2.
\end{align*} 
Using then the Cauchy-Schwarz inequality and  Corollary \ref{ident} backward, we get
\begin{align*}
\big|\big\langle\psi,\bO_\theta(F)\vf\big\rangle\big|&\leq\|\mathds{1}_{U_n}\|_2^{-2}\|\vf\|_2\|\psi\|_2 \Big(\sup_{g_1}\int_{G_n\times G_n} 
\big|\langle \pi_{\theta}(g_1)\mathds{1}_{U_n}, \,\bO_{\theta}(F)
\pi_{\theta}(g_2) \mathds{1}_{U_n} \big\rangle \big|dg_2\Big)^{1/2}\\
&\qquad\quad\qquad\qquad\qquad\times \Big(\sup_{g_2}\int_{G_n\times G_n} 
\big|\langle \pi_{\theta}(g_1)\mathds{1}_{U_n}, \,\bO_{\theta}(F)
\pi_{\theta}(g_2) \mathds{1}_{U_n} \big\rangle \big|dg_1\Big)^{1/2}.
\end{align*} 
Since moreover $\bO_\theta(F)^*=\bO_\theta(\overline F)$, we deduce by  Corollary \ref{wignersup}:
$$
\big|\big\langle\psi,\bO_\theta(F)\vf\big\rangle\big|\leq \|\vf\|_2\|\psi\|_2 \big(q^{n}+\|
   \mu_0^{s+1}\|_1\big)\, \|J^{-s} F\|_{\infty}.
   $$
   Hence, we get the following statement :
\begin{thm}
\label{caldaf}
Let $\theta\in\CO_\k^\times$.
The quantization map 
$\bO_\theta:\CD'(X_n)\to\CL\big(\CD(U_n),\CD'(U_n)\big)$ restricts to a continuous linear mapping from
the symbol space $\CB(X_n)$ to the space of bounded operators $\CB(L^2(U_n))$. More precisely, for each $s<-2$ and $F \in \CB(X_n)$, 
we have the operator norm estimate:
\begin{equation*}
\| {\bf \Omega}_{\theta}(F) \| \leq  \big(q^{n}+\|
   \mu_0^{s+1}\|_1\big)\, \, \|J^{-s} F\|_{\infty}.  
\end{equation*}
\end{thm}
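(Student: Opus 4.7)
The strategy is a Schur-type test performed in the coherent-state picture, with the mother wavelet taken to be $\chi_{U_n}$. The two main ingredients are already in hand: the resolution of identity from Corollary \ref{ident} and the uniform $L^1$-estimate from Corollary \ref{wignersup}. So the remaining task is essentially to assemble them.

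First, I would fix $\vf, \psi \in \CD(U_n)$ (which is dense in $L^2(U_n)$, so enough to conclude by continuity) and write the matrix element
\[
\langle \psi, \bO_\theta(F)\vf\rangle
\]
by inserting the resolution of identity of Corollary \ref{ident} twice, once on each side of $\bO_\theta(F)$, using $\chi_{U_n}$ as the mother wavelet. Since $|\theta|_\k = 1$ for $\theta \in \CO_\k^\times$, this produces the absolutely convergent double integral
\[
\langle \psi, \bO_\theta(F)\vf\rangle = \|\chi_{U_n}\|_2^{-4}\!\!\int_{G_n \times G_n}\!\!\langle \psi, \pi_\theta(g_1)\chi_{U_n}\rangle\,\langle \pi_\theta(g_1)\chi_{U_n}, \bO_\theta(F)\pi_\theta(g_2)\chi_{U_n}\rangle\,\langle \pi_\theta(g_2)\chi_{U_n}, \vf\rangle\,dg_1 dg_2.
\]

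Next, I would run a Schur-test argument on this integral. Writing
\[
K(g_1,g_2) := \langle \pi_\theta(g_1)\chi_{U_n}, \bO_\theta(F)\pi_\theta(g_2)\chi_{U_n}\rangle,
\]
factor $|K| = |K|^{1/2} |K|^{1/2}$ and apply the Cauchy-Schwarz inequality to split the double integral into a product of two factors of the form $\int |K(g_1,g_2)|\,|\langle\pi_\theta(g_i)\chi_{U_n},\cdot\rangle|^2\,dg_1 dg_2$. Using Fubini and bounding one variable's integral by its supremum, each factor reduces to a single coherent-state integral of $|\vf|^2$ (resp.\ $|\psi|^2$), which collapses via Corollary \ref{ident} (used backwards) to $\|\chi_{U_n}\|_2^2\|\vf\|_2^2$ (resp.\ the analogue for $\psi$).

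Finally, the two remaining sup-integrals are exactly the quantities controlled by Corollary \ref{wignersup}: one for $F$ itself and one for $\overline F$ after using the elementary identity $\bO_\theta(F)^* = \bO_\theta(\overline F)$ to swap the roles of $g_1$ and $g_2$. Since $\|J^{-s}\overline{F}\|_\infty = \|J^{-s}F\|_\infty$ (the operator $J^{-s}$ commutes with complex conjugation, because $\CF_\k(\overline\Psi\mu_0^{-s})$ is real-valued by Lemma \ref{corsigmaphi}), both sups are bounded by $q^{-n}(1 + q^{-n}\|\mu_0^{s+1}\|_1)\|J^{-s}F\|_\infty$. Combining everything and using $\|\chi_{U_n}\|_2^2 = q^{-n}$ to cancel the normalization constants yields the claimed bound
\[
|\langle \psi, \bO_\theta(F)\vf\rangle| \leq (q^n + \|\mu_0^{s+1}\|_1)\,\|J^{-s}F\|_\infty\,\|\vf\|_2\,\|\psi\|_2.
\]
Taking suprema gives the operator norm estimate, and the continuity of $\bO_\theta \colon \CB(X_n) \to \CB(L^2(U_n))$ follows at once from the definition of the topology on $\CB(X_n)$.

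There is no real obstacle left at this stage, since the two genuinely non-trivial ingredients --- the fact that the Wigner coefficients $W_g^\theta$ are rapidly decaying enough after applying $J^s$ (Proposition \ref{maj}), and the translation from this decay to the mixed matrix-element estimate (Corollary \ref{wignersup}) --- have already been established. The only bookkeeping point requiring care is the precise tracking of the constants $q^{\pm n}$ coming from $\|\chi_{U_n}\|_2^2 = \mathrm{Vol}(U_n) = q^{-n}$ and the factors in Corollaries \ref{ident} and \ref{wignersup}, so that they combine into the stated coefficient $(q^n + \|\mu_0^{s+1}\|_1)$.
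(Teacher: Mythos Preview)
Your proposal is correct and follows essentially the same route as the paper: insert the coherent-state resolution of identity from Corollary \ref{ident} twice with mother wavelet $\chi_{U_n}$, apply Cauchy--Schwarz in Schur-test form, collapse the single-variable integrals via Corollary \ref{ident} used backward, and bound the two sup-integrals by Corollary \ref{wignersup} (invoking $\bO_\theta(F)^*=\bO_\theta(\overline F)$ for the second). Your write-up is in fact slightly more explicit than the paper's, in that you spell out why $\|J^{-s}\overline F\|_\infty=\|J^{-s}F\|_\infty$ via the reality of $\CF_\k(\overline\Psi\mu_0^{-s})$.
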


\subsection{The composition law of symbols in $\CB(X_n)$}
\label{DD}

The results of section \ref{CLSL2} are, of course, not directly applicable for symbols in $\CB(X_n)$. 
In particular, we don't have the integral formula given in Proposition \ref{IF}.
Instead of trying to extend from $\CD(X_n)$ to $\CB(X_n)$ this oscillatory integral formula
(which is probably feasible), we obtain here a composition result entirely  based on Wigner functions.  
Our arguments  follow closely \cite[Section 6 \& 8]{Un84}. 

We first need ``non-integrated'' versions of the crucial estimates given in the previous section. For convenience, we shall introduce the following family of numerical
functions:
$$
\omega_s:G_n\to \R_+,\quad 
g=(u,t)\mapsto \mu_0^{s}(t)+\mathds 1_{\varpi^{-n}\CO_\k}(t),\qquad s\in\R.
$$
Note that $\omega_s$ is invariant under group inversion and, moreover, we have for all $s\in\R$, all $g=(u,t)\in G_n$:
$$
 \mu_0^{s}(t)\leq \omega_s(g)\leq 2\,\mu_0^{s}(t).
 $$
 Hence, we have the Peetre type inequalities:
 \begin{align}
 \label{P2}
 \omega_s(gg')\leq 
2\,\omega_s(g) \times\begin{cases}
 \omega_s(g')\quad &\mbox{if}\quad s\geq 0\\
\omega_{-s}(g')\quad &\mbox{if}\quad s< 0
 \end{cases},\quad \forall g,g'\in G_n.
 \end{align}
 
\begin{lem}
\label{ma}
For $\theta\in\CO_\k^\times$ and $s\in\R$,
 we have
 \begin{align}
 \label{T1}
 \int_{ X_n} \big | J^{s}W_{g_1}^{\theta}([g_2])   \big  | \,d[g_2]
 \leq q^{-3n}\,\omega_{s+1}(g_1),\quad \forall s \in\R.
 \end{align}
Moreover, we have for $F\in\CB(X_n)$ and for  $s<-1$ arbitrary:
 \begin{align}
 \label {T_2}
  \big|\big\langle \pi_{\theta}(g_1)\mathds{1}_{U_n}, \,\bO_{\theta}(F) 
\pi_{\theta}(g_2) \mathds{1}_{U_n} \big\rangle\big| \leq q^{-2n}\,\|J^{-s}F\|_\infty\,\omega_{s+1}(g_1^{-1}g_2).
\end{align}
\end{lem}
\begin{proof}
The first inequality is obtained using exactly  the same arguments than those given in Proposition \ref{maj}, but without performing the integral
over $g_1\in G_n$ (which is the reason why we don't need the constraint $s<-2$ here).
The second inequality follows from the first inequality combined with the formula \eqref{formula} (formula which is valid for every $s\in\R$,
but which is trivial in the case $s\geq -1$).
\end{proof}

The next result is a kind of converse of the estimate \eqref{T_2}:
\begin{prop}
\label{estim}
Let $\theta\in\CO_\k^\times$.  
Let $A$ be a bounded operator on $L^2(U_n)$ such that (with the notation of Lemma \ref{ma}) for all
$s<0$ there exists $C_s>0$ such that for all $g_1,g_2\in G_n$, we have
$$
 \big|\big\langle \pi_{\theta}(g_1)\mathds{1}_{U_n}, \,A\,
\pi_{\theta}(g_2) \mathds{1}_{U_n} \big\rangle\big| \leq C_s\,\omega_{s}(g_1^{-1}g_2).
$$
Then, there exists a unique function $F_A\in \CB(X_n)$ such that $A=\bO_{\theta}(F_A)$.
\end{prop}
\begin{proof}
Consider the function $F_A$ on $X_n$ defined by:
$$
F_A([g]):=q^{2n}\int_{G_n\times G_n} \big\langle \pi_{\theta}(g_1)\mathds{1}_{U_n}, \,A\,
\pi_{\theta}(g_2) \mathds{1}_{U_n} \big\rangle\,\overline{W^\theta_{g_1^{-1}g_2}}([g_1^{-1}g])\,dg_1dg_2.
$$
Let us first check that $F_A$ belongs to the symbol space $\CB(X_n)$. Since the operator $J$ commutes
with left translations, we get for $j\in\N$:
$$
J^jF_A([g]):=q^{2n}\int_{G_n\times G_n} \big\langle \pi_{\theta}(g_1)\mathds{1}_{U_n}, \,A\,
\pi_{\theta}(g_2) \mathds{1}_{U_n} \big\rangle\,\overline{J^{-j}W^\theta_{g_1^{-1}g_2}}([g_1^{-1}g])\,dg_1dg_2.
$$
By assumption, we have for  $s<0$ arbitrary:
\begin{align*}
&q^{-2n}|J^{j}F_A([g])|\leq \int_{G_n\times G_n} \big|\big\langle \pi_{\theta}(g_1)\mathds{1}_{U_n}, \,A\,
\pi_{\theta}(g_2) \mathds{1}_{U_n} \big\rangle\big|\,\big|\overline{J^{j}W^\theta_{g_1^{-1}g_2}}([g_1^{-1}g])\big|\,dg_1dg_2\\
&\leq C_{s} \int_{G_n\times G_n} \omega_{s}(g_1^{-1}g_2)\,\big|\overline{J^{j}W^\theta_{g_1^{-1}g_2}}([g_1^{-1}g])\big|\,dg_1dg_2
= C_{s} \int_{G_n\times G_n} \omega_{s}(g_1^{-1}g_2)\,\big|\overline{J^{j}W^\theta_{g_1^{-1}g_2}}([g_1^{-1}])\big|\,dg_1dg_2\\
&\quad= C_{s} \int_{G_n\times G_n} \omega_{s}(g_2)\,\big|\overline{J^{j}W^\theta_{g_2}}([g_1^{-1}])\big|\,dg_1dg_2
= C_{s} \int_{G_n\times G_n} \omega_{s}(g_2)\,\big|\overline{J^{j}W^\theta_{g_2}}([g_1])\big|\,dg_1dg_2\\
&\quad= C_{s} q^n\int_{X_n\times G_n} \omega_{s}(g_2)\,\big|\overline{J^{j}W^\theta_{g_2}}([g_1])\big|\,d[g_1]dg_2
\leq C_{s} q^{-2n} \int_{G_n} \omega_{s}(g_2)\,\omega_{j+1}(g_2)dg_2,
\end{align*}
which is finite for $s<-2-j$. 

It remains to show that $\bO_\theta(F_A)=A$, 
unicity will then follow from  injectivity of the quantization map $\bO_\theta$ (see Remark \ref{INJ}). Since the set $\{\pi_{\theta}(g)\mathds{1}_{U_n}: g\in G_n\}$
is total in $L^2(U_n)$, it suffices to show that for all $g_1,g_2\in G_n$, we have:
$$
 \big\langle \pi_{\theta}(g_1)\mathds{1}_{U_n}, \,A\,\pi_{\theta}(g_2) \mathds{1}_{U_n} \big\rangle=
  \big\langle \pi_{\theta}(g_1)\mathds{1}_{U_n}, \,\bO_\theta(F_A)\,\pi_{\theta}(g_2) \mathds{1}_{U_n} \big\rangle.
  $$
By Proposition \ref{extend} and Fubini's Theorem (all the integrals are absolutely convergent here), we get:
\begin{align*}
 & \big\langle \pi_{\theta}(g_1)\mathds{1}_{U_n}, \,\bO_\theta(F_A)\,\pi_{\theta}(g_2) \mathds{1}_{U_n} \big\rangle
  =q^n\int_{X_n} F_A([g]) \,W_{g_1^{-1} g_2}^{\theta}([g_1^{-1}g])\,d[g]\\
   &\qquad=q^{3n}\int_{X_n\times G_n\times G_n} \big\langle \pi_{\theta}(g_1')\mathds{1}_{U_n}, \,A\,
\pi_{\theta}(g_2') \mathds{1}_{U_n} \big\rangle\,\overline{W^\theta_{g_1'^{-1}g_2'}}([g_1'^{-1}g])\,W_{g_1^{-1} g_2}^{\theta}([g_1^{-1}g])\,d[g]dg_1'dg_2' .
  \end{align*}
  Now, by Remark \ref{rm} (ii), we have (with Dirac's ket-bra notation)
  \begin{align*}
  &q^{3n}\int_{X_n} \overline{W^\theta_{g_1'^{-1}g_2'}}([g_1'^{-1}g])\,W_{g_1^{-1} g_2}^{\theta}([g_1^{-1}g])\,d[g]\\
  &\qquad=
  q^{n}\int_{X_n} \overline{\bO_\theta^*\big(\big|\pi_{\theta}(g_2') \mathds{1}_{U_n} \big\rangle
   \big\langle \pi_{\theta}(g_1')\mathds{1}_{U_n}\big|\big)}\,
    \bO_\theta^*\big(\big|\pi_{\theta}(g_2) \mathds{1}_{U_n} \big\rangle
   \big\langle \pi_{\theta}(g_1)\mathds{1}_{U_n}\big|\big)\,d[g] \\
& \qquad  = q^{2n}\big\langle \pi_{\theta}(g_2')\mathds{1}_{U_n},\pi_{\theta}(g_2) \mathds{1}_{U_n} \big\rangle
   \big\langle \pi_{\theta}(g_1)\mathds{1}_{U_n},\pi_{\theta}(g_1') \mathds{1}_{U_n} \big\rangle,
  \end{align*}
which follows from the unitarity of $q^{-n/2}\bO_\theta$. We conclude using twice the resolution of identity given in Corollary \ref{ident}.
\end{proof}

We finally show that the linear space of bounded operators $\{\bO_\theta(F)\,:\,F\in\CB(X_n)\}$, forms an algebra.

\begin{thm}
\label{alg}
 Let $\theta\in\CO_\k^\times$.
Then, for $F_1,F_2\in \CB(X_n)$, there exists  $F_3\in \CB(X_n)$ such that:
$$\bO_\theta(F_1)\bO_\theta(F_2)=\bO_\theta(F_3).$$
\end{thm}
\begin{proof}
By Proposition \ref{estim}, it suffices to show that for all $s<0$ there exists $C_s>0$ such that for all $g_1,g_2\in G_n$, we have
$$
 \big|\big\langle \pi_{\theta}(g_1)\mathds{1}_{U_n}, \,\bO_\theta(F_1)\bO_\theta(F_2)\,
\pi_{\theta}(g_2) \mathds{1}_{U_n} \big\rangle\big| \leq C_s\,\omega_{s}(g_1^{-1}g_2).
$$
By Corollary \eqref{ident}, we have
\begin{align*}
&\big\langle \pi_{\theta}(g_1)\mathds{1}_{U_n}, \,\bO_\theta(F_1)\bO_\theta(F_2)\,
\pi_{\theta}(g_2) \mathds{1}_{U_n} \big\rangle\\
&\qquad\qquad\qquad\qquad=q^n\int_{G_n}\big\langle \pi_{\theta}(g_1)\mathds{1}_{U_n}, \,\bO_\theta(F_1)\pi_{\theta}(g)\mathds{1}_{U_n}\big\rangle
\big\langle\pi_{\theta}(g)\mathds{1}_{U_n},\,\bO_\theta(F_2)\,
\pi_{\theta}(g_2) \mathds{1}_{U_n} \big\rangle dg.
\end{align*}
Now, we can use the estimate  \eqref{T_2}, to deduce for arbitrary $s_1,s_2<-1$:
\begin{align*}
&\big|\big\langle \pi_{\theta}(g_1)\mathds{1}_{U_n}, \,\bO_\theta(F_1)\bO_\theta(F_2)\,
\pi_{\theta}(g_2) \mathds{1}_{U_n} \big\rangle\big|\\
&\qquad\qquad\leq 
q^{-3n}\,\|J^{-s_1}F_1\|_\infty\,\|J^{-s_2}F_2\|_\infty\,\int_{G_n}\omega_{s_1+1}(g_1^{-1}g)\omega_{s_2+1}(g^{-1}g_2)\,dg\\
&\qquad\qquad\qquad=
q^{-3n}\,\|J^{-s_1}F_1\|_\infty\,\|J^{-s_2}F_2\|_\infty\,\int_{G_n}\omega_{s_1+1}(g)\omega_{s_2+1}(g^{-1}g_1^{-1}g_2)\,dg\\
&\qquad\qquad\qquad\leq
q^{-3n}\,\|J^{-s_1}F_1\|_\infty\,\|J^{-s_2}F_2\|_\infty\,\Big(\int_{G_n}\omega_{s_1+1}(g)\omega_{-s_2-1}(g^{-1})\,dg\Big) \;\omega_{s_2+1}(g_1^{-1}g_2),
\end{align*}
where the last inequality follows from the Peetre inequality \eqref{P2}. This is enough to conclude since the above integral is finite when $s_1-s_2<-1$.
\end{proof}

\section*{Acknowledgement} 
We thank the
referee for useful remarks and suggestions.

\end{document}